\def\1{\mathbf{1}}
\newcommand\oprocendsymbol{\hbox{$\square$}}
\newcommand\oprocend{\relax\ifmmode\else\unskip\hfill\fi\oprocendsymbol}
\def\K{\mathcal{K}}
\def\KL{\mathcal{KL}}
\def\N{\mathbb{N}}
\def\R{\mathbb{R}}
\def\Tset{\mathbb{T}}
\newcommand{\T} {^\top}
\newcommand{\dist}{\operatorname{dist}}             
\newtheorem{theorem}{Theorem}[section]
\newtheorem{proposition}[theorem]{Proposition}
\newtheorem{corollary}[theorem]{Corollary}
\newtheorem{definition}[theorem]{Definition}
\newtheorem{lemma}[theorem]{Lemma}
\newenvironment{proof}{\sl{Proof: }\color{Black!40!black}}{\hfill$\square$\\}
\begin{document}
\begin{center} {Universit\`a  degli studi di Padova\\ Dipartimento di Ingegneria dell'Informazione}\end{center}
\begin{center} Nicoletta Bof, Ruggero Carli, Luca Schenato\end{center}
\begin{center}{\bf Technical Report (Sept. 2018)}\\ 
\vspace{0.5cm}

{\bf \Large  Lyapunov Theory for Discrete Time Systems}
\end{center}
\vspace{0.5cm}

This work contains a collection of Lyapunov related theorems for discrete time systems. Its main purpose it to collect in a self contained document part of the Lyapunov theory in discrete time, since, in the literature, there does not seem to be a unique work which contains these results and their proof, apart from \cite{Hahn}, which deals with discrete time Lyapunov theory, but is written in German, and so it is not easily accessible. The work has been obtained starting from the Lyapunov results for continuous time given  in \cite{Khalil} and from the results contained in \cite{Hahn}. 

The work also contains some convergence results for particular class of discrete time systems (in Sections 6 and 7).

Before focusing on Lyapunov theory, it is useful to introduce the following definition

\begin{definition}
A function $f(t,x)$ is said to be \textbf{Lipschitz} in $(\bar{t},\bar{x})$ if 
\begin{equation} \label{eq:Lips}
\| f(t,x)-f(t,y)\| \leq L \|x-y\| 
\end{equation}
$\forall\ (t,x),(t,y)$ in a neighbourhood of $(\bar{t},\bar{x})$.
The constant $L$ is called \textbf{Lipschitz constant}.\\
Consider $f(t,x)=f(x)$ independent of $t$
\begin{itemize}
 \item  $f$ is \textbf{locally Lipschitz} on a domain $D\subset \R^n$ open and connected if each point in $D$ has a neighbourhood $D_0$ such that \eqref{eq:Lips} is satisfied with Lipschitz constant $L_0$.
 \item $f$ is \textbf{Lipschitz} on a set $W$ if \eqref{eq:Lips} is satisfied for all points in $W$ with the same constant $L$. A function $f$ locally Lipschitz on $D$ is 	Lipschitz on every compact subset of $D$.
 \item $f$ is \textbf{globally Lipschitz} if it is Lipschitz on $\R^n$.
 \end{itemize} 
If $f(t,x)$ depends on $t$, the same definitions are said to hold \textbf{uniformly in $t$} for all $t$ in an interval of time, if  the Lipschitz constant do not vary due to the time. In particular, a function $f(t,x)$ is \textbf{globally uniformly Lipschitz} if  it is Lipschitz on $\R^n$ and the constant $L$ does not depend  on time $t$.
\end{definition}
A continuously differentiable function on a domain $D$ is also Lipschitz in the same domain.
\section{Autonomous systems}
Consider the autonomous system
\begin{equation} \label{eq:autonomousSyst}
x(t+1) = f(x(t))
\end{equation}
where $f\colon D\to\R^n$ is {\bf locally Lipschitz} in $D\subset \R^n$, and suppose $f(0)=0$, that is $x=0$ is an equilibrium point for system \eqref{eq:autonomousSyst} (all this can be extended for an equilibrium point different from $0$).
\begin{definition}
The equilibrium point $x=0$ of \eqref{eq:autonomousSyst} is
\begin{itemize}
\item \textbf{stable} if, for each $\epsilon>0$, there is $\delta=\delta(\epsilon)$ such that 
\[
	\|x(0)\|<\delta\Rightarrow\|x(t)\|<\epsilon,\ \forall t\geq 0
\]
\item \textbf{unstable} if it is not stable
\item \textbf{asymptotically stable} if it is stable and $\delta$ can be chosen such that
\[
	\|x(0)\|<\delta\Rightarrow \lim_{t\to\infty}x(t) = 0
\]
\end{itemize}
\end{definition}

\begin{theorem}[Existence of a Lyapunov function implies stability]\label{th:LyapAutSyst}
Let $x=0$ be an equilibrium point for the autonomous system
\[
	x(t+1) = f(x(t))
\]
where $f\colon D\to\R^n$ is locally Lipschitz in $D\subset \R^n$ and $0\in D$. Suppose there exists a function $V\colon D\to\R$ which is continuous and such that
\begin{gather} V(0) = 0 \text{ and } V(x)>0,\ \forall x\in D-\{0\} \label{eq:VposDef}\\
V(f(x)) -V(x) \leq 0,\ \forall x\in D \label{eq:Vdecr}
\end{gather}
Then $x=0$ is stable. Moreover if 
\begin{equation}
V(f(x)) -V(x) < 0,\ \forall x\in D-\{0\} \label{eq:Vstrdecr}
\end{equation}
then $x= 0$ is asymptotically stable.
\end{theorem}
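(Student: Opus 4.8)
My plan is to mimic the classical continuous-time proof (see \cite{Khalil}), with one essential modification: in discrete time a trajectory may ``jump'' across a sphere in a single step, so I cannot work with a ball and then argue that the trajectory must cross its boundary. Instead I will build a \emph{connected, positively invariant} sublevel set of $V$ contained in the prescribed ball, using the continuity of $f$ to control the jumps.

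For stability, fix $\epsilon>0$. Since $D$ is open and $0\in D$, I would first pick $r\in(0,\epsilon]$ with the closed ball $\{\|x\|\le r\}\subseteq D$. As $V$ is continuous and, by \eqref{eq:VposDef}, strictly positive on the compact sphere $\{\|x\|=r\}$, the constant $\alpha:=\min_{\|x\|=r}V(x)$ is positive; I then fix any $c\in(0,\alpha)$ and let $\Omega$ be the connected component of $\{x\in D:V(x)\le c\}$ containing $0$. The two facts to establish about $\Omega$ are: (a) $\Omega\subseteq\{\|x\|<r\}$ and $\Omega$ is compact; (b) $f(\Omega)\subseteq\Omega$. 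For (a): if $\Omega$ contained a point of norm $\ge r$, then by connectedness and the intermediate value theorem applied to $x\mapsto\|x\|$ it would contain a point $z$ with $\|z\|=r$, whence $V(z)\ge\alpha>c$ — impossible; hence $\Omega\subseteq\{\|x\|<r\}$, its closure lies in $\{\|x\|\le r\}\subseteq D$, and since components of the $D$-closed set $\{V\le c\}$ are closed, $\Omega$ is closed in $\R^n$ and bounded, i.e. compact. For (b): note that \eqref{eq:Vdecr} already requires $V(f(x))$ to be defined for $x\in D$, i.e. $f(D)\subseteq D$; then for $x\in\Omega$ we have $V(f(x))\le V(x)\le c$, so $f(\Omega)\subseteq\{x\in D:V(x)\le c\}$, and since $f$ is continuous this image is connected and contains $f(0)=0$, hence lies in the component $\Omega$. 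Finally, using continuity of $V$ at $0$ with $V(0)=0<c$, I would choose $\delta\in(0,r)$ so that $\|x\|<\delta$ implies $V(x)<c$; then $\{\|x\|<\delta\}\subseteq\Omega$ (a connected subset of $\{V\le c\}$ through $0$), and by (b) and induction $\|x(0)\|<\delta$ yields $x(t)\in\Omega\subseteq\{\|x\|<r\}$ for all $t\ge0$, i.e. $\|x(t)\|<\epsilon$. This is the stability claim.

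For asymptotic stability assume \eqref{eq:Vstrdecr}, keep $\delta$ from above, and take $x(0)$ with $\|x(0)\|<\delta$, so $x(t)\in\Omega$ for all $t$. By \eqref{eq:Vdecr} the real sequence $V(x(t))$ is nonincreasing and bounded below by $0$, hence converges to some $c^\star\ge0$; I would show $c^\star=0$ by contradiction. If $c^\star>0$, the trajectory remains in the compact set $K:=\{x\in\Omega:V(x)\ge c^\star\}$, which excludes $0$; by \eqref{eq:Vstrdecr} the continuous map $x\mapsto V(f(x))-V(x)$ is strictly negative on $K$, so $\gamma:=\max_{x\in K}(V(f(x))-V(x))<0$, giving $V(x(t))\le V(x(0))+\gamma t\to-\infty$, a contradiction with $V\ge0$. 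Thus $V(x(t))\to0$, and since the $x(t)$ lie in the compact set $\Omega$ while $V$ is positive definite, every subsequential limit $x^\star$ satisfies $V(x^\star)=0$ and hence $x^\star=0$; therefore $x(t)\to0$.

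The step I expect to be the real obstacle — and the only place the argument genuinely departs from the continuous-time one — is guaranteeing that the trajectory cannot escape the ball $\{\|x\|<r\}$ in a single step: this is exactly what is bought by passing from a ball to the connected component $\Omega$ and by invoking continuity of $f$ in $f(\Omega)\subseteq\Omega$. The remaining ingredients (positivity of $\alpha$, compactness of $\Omega$ and $K$, and the telescoping estimate on $V(x(t))$) are routine once this point is in place.
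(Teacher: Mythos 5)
Your proof is correct, and while its skeleton (a sublevel set trapped inside $B_r$ via $\alpha=\min_{\|x\|=r}V$, then a compactness/telescoping contradiction for the asymptotic part) matches the paper's, you make one genuine improvement. The paper defines $\Omega_\beta=\{x\in B_r \mid V(x)\le\beta\}$ and asserts its invariance solely from $V(x(t+1))\le V(x(t))\le\dots\le V(x(0))\le\beta$; this shows the trajectory stays in the sublevel set $\{V\le\beta\}$ but not that it stays in $B_r$ --- exactly the one-step-jump issue you flag --- so the paper's invariance claim is incomplete as written. Your replacement of $\Omega_\beta$ by the connected component $\Omega$ of $\{V\le c\}$ through the origin, together with the observation that $f(\Omega)$ is connected and contains $f(0)=0$ and hence lies in that same component, closes this gap; the price is the (mild) extra use of the continuity of $f$, which the hypotheses supply. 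Your asymptotic-stability argument is also tidier than the paper's: instead of the annulus $\Delta=\{d\le\|x\|\le r\}$ and the auxiliary constant $\gamma'$, you work directly with the compact set $K=\{x\in\Omega \mid V(x)\ge c^\star\}$, which visibly contains the whole tail of the trajectory and excludes the origin, and the telescoping bound $V(x(t))\le V(x(0))+\gamma t$ yields the contradiction immediately; the final passage from $V(x(t))\to 0$ to $x(t)\to 0$ via subsequential limits in the compact set $\Omega$ is likewise sound. Both arguments tacitly need $f(D)\subseteq D$ (implicit in \eqref{eq:Vdecr}) and $f(0)=0$; you make these explicit where the paper leaves them unstated.
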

\begin{proof}
Given $\epsilon>0$, choose $r\in(0,\epsilon]$ such that $B_r=\{x\in\R^n\ |\ \|x\|<r\}\subset D$. Let $\alpha = \min_{\|x\|=r}V(x)$, then $\alpha>0$ by \eqref{eq:VposDef}. Take $\beta \in (0,\alpha)$ and let $\Omega_\beta = \{x\in B_r\ |\ V(x)\leq\beta\}$. $\Omega_\beta$ is in the interior of $B_r$, and any trajectory that starts in $\Omega_\beta$ stays in $\Omega_\beta$ for all $t\leq 0$. This is true due to \eqref{eq:Vdecr}, since $V(x(t+1))\leq V(x(t))\leq \dots\leq V(x(0))\leq \beta,\ \forall t\geq 0$. Since $\Omega_\beta$ is compact and invariant for $f$, and $f$ is locally Lipschitz in $D$, there exists a unique solution defined for all $t>0$ if $x(0)\in\Omega_\beta$. Since $V(x)$ is continuous and $V(0)=0$, there exists a $\delta>0$ such that if $\|x\|<\delta$ then $V(x)<\beta$. This implies that $B_\delta\subset \Omega_\beta \subset B_r$, so
\[x(0)\in B_\delta \Rightarrow x(0) \in \Omega_\beta \Rightarrow x(t)\in\Omega_\beta,\ \forall t\geq 0 \Rightarrow x(t)\in B_r,\ \forall t\geq 0\]
Therefore
\[
	\|x\|<\delta \Rightarrow \|x(t)\|<r\leq \epsilon,\ \forall t\geq 0
\]
and so $x=0$ is a stable point.\\
Suppose now that \eqref{eq:Vstrdecr} holds. Since $x=0$ is stable, it is possible to find for every $r>0$ such that $B_r\subset D$ a constant $b$ such that $\Omega_b\subset B_r$ and all the trajectorys starting from $\Sigma_b$ stay in $\Sigma_b$.  Since $V(x)$ is bounded below by $0$ and is strictly decreasing along the trajectories in $D$, it holds that $V(x(t))\to c\geq 0$ as $t\to\infty$. Suppose ab absurdo that $c>0$. Starting from a point in $\Omega_b$ we have that the trajectories are such that $V(x(t))\to c$ as $t\to\infty$ with $c<b$ (otherwise if $c=b$ starting from a point $x\in\Omega_b$ such that $V(x)=b$ the trajectory would reach in a step a point such that $V(f(x))<b$, which is a contradiction). Now, as before, since $V(x)$ is continuous and $V(0)=0$, there exists $d>0$ such that $B_d\subset\Omega_c$. Consider set $\Delta = \{x|d\leq\|x\|\leq r\}$, which is a compact set that contains $\{x|V(x)=c\}$. Since $V$ and $f$ are continuous functions, we can define
\[\gamma := \min_{x\in\Delta} V(x)-V(f(x))\]
due to Bolzano Weierstrass theorem. Since $\lim_{t\to\infty}V(x(t))= c$ and $V$ is continuous, there exists $\bar{t}$ such that for all $t>\bar{t},\ V(x(t))\leq c+\gamma'$, with $\gamma'<\gamma$ and $x(t)\in\Delta$. Since $x(t)$ belongs to $\Delta$, it also holds that $V(x(t))-V(f(x(t)))\geq\gamma$, and so $V(f(x(t)))\leq -\gamma+V(x(t))\leq c+\gamma'-\gamma<c$, which is a contradiction.
\end{proof}
\begin{definition} A function $V\colon D\to\R$ satisfying \eqref{eq:VposDef} and \eqref{eq:Vdecr} is called  a \textbf{Lyapunov function}.
\end{definition}
\begin{theorem}[Global asymptotic stability from Lyapunov]\label{the:globallyStable}
Let $x=0$ be an equilibrium point for the autonomous system
\[
	x(t+1) = f(x(t))
\]
where $f\colon D\to\R^n$ is locally Lipschitz in $D\subset \R^n$ and $0\in D$. Let $V\colon \R^n\to\R$ be a continuous function such that
\begin{align}
V(0) = 0 \text{ and } V(x)>0,\ \forall x\in D-\{0\} \label{eq:VposDef1}\\
\|x\|\to\infty \Rightarrow V(x)\to\infty \label{eq:VradUnbounded}\\
V(f(x)) -V(x) < 0,\ \forall x\in D \label{eq:Vdecr1}
\end{align}
then $x=0$ is globally asymptotically stable.
\end{theorem}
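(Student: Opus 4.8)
The plan is to bootstrap from Theorem~\ref{th:LyapAutSyst} rather than redo the stability estimates. Note first that conditions \eqref{eq:VposDef1} and \eqref{eq:Vdecr1} are exactly the hypotheses \eqref{eq:VposDef} and \eqref{eq:Vstrdecr} of Theorem~\ref{th:LyapAutSyst} (read with $D=\R^n$, which is the setting in which global asymptotic stability is meaningful), so that theorem already gives that $x=0$ is stable and locally asymptotically stable. What is left is to upgrade \emph{local} attractivity to \emph{global} attractivity, i.e.\ to show $\lim_{t\to\infty}x(t)=0$ for \emph{every} initial condition $x(0)\in\R^n$, and this is precisely what radial unboundedness \eqref{eq:VradUnbounded} provides.

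First I would fix an arbitrary $x(0)\in\R^n$, put $c:=V(x(0))$, and consider the sublevel set $\Omega_c=\{x\in\R^n\mid V(x)\le c\}$. The key claim is that $\Omega_c$ is compact and positively invariant. Closedness is immediate from continuity of $V$; boundedness follows from \eqref{eq:VradUnbounded}, since an unbounded sequence contained in $\Omega_c$ would have $V\to\infty$ along it while remaining $\le c$, a contradiction. Positive invariance is a one-line induction: by \eqref{eq:Vdecr1}, $V(x(t+1))\le V(x(t))$, hence $V(x(t))\le V(x(0))=c$ for all $t\ge 0$, i.e.\ $x(t)\in\Omega_c$. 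Consequently the whole trajectory lives in a fixed compact set and, as in the proof of Theorem~\ref{th:LyapAutSyst}, is well defined for all $t\ge 0$.

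The convergence step is then a transcription of the second half of the proof of Theorem~\ref{th:LyapAutSyst} with $\Omega_c$ in place of $\Omega_b$. The sequence $V(x(t))$ is nonincreasing and bounded below by $0$, so it converges to some $c^\star\ge 0$. If $c^\star>0$, choose $d>0$ with $B_d\subset\{x\mid V(x)<c^\star\}$, which is possible since $V$ is continuous and $V(0)=0<c^\star$. Since $V(x(t))\ge c^\star$ for all $t$, the trajectory never enters $B_d$, so it remains in the compact set $\Delta:=\{x\in\Omega_c\mid \|x\|\ge d\}$, on which $x\neq 0$ and hence, by continuity of $V$ and $f$ together with \eqref{eq:Vdecr1}, $\gamma:=\min_{x\in\Delta}\bigl(V(x)-V(f(x))\bigr)>0$. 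Then $V(x(t+1))\le V(x(t))-\gamma$ for all $t$, forcing $V(x(t))\to-\infty$, contradicting $V\ge 0$. Hence $c^\star=0$, i.e.\ $V(x(t))\to 0$. Finally I would pass from $V(x(t))\to 0$ to $x(t)\to 0$: if not, some subsequence $x(t_k)$ satisfies $\|x(t_k)\|\ge\eps$ for a fixed $\eps>0$, and by compactness of $\Omega_c$ a further subsequence converges to some $x^\star\in\Omega_c$ with $x^\star\neq 0$; then $V(x^\star)>0$ by \eqref{eq:VposDef1} while $V(x(t_k))\to V(x^\star)$ by continuity of $V$, contradicting $V(x(t))\to 0$. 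Since $x(0)$ was arbitrary, $x=0$ is globally asymptotically stable.

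I do not expect a genuine obstacle here: the only point that is new relative to Theorem~\ref{th:LyapAutSyst} is that \eqref{eq:VradUnbounded} makes \emph{every} sublevel set $\Omega_c$ (not just small ones) compact and invariant, which is what lets the earlier local argument be run on an arbitrarily large compact invariant set. The mild care needed is in the two ``closing'' steps — justifying compactness of $\Delta$ and deducing $x(t)\to 0$ from $V(x(t))\to 0$ via a compactness/continuity argument — both of which are routine.
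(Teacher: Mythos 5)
Your proposal is correct and follows essentially the same route as the paper: the paper's own proof simply notes that by radial unboundedness the sublevel set $\Omega_c$ with $c=V(p)$ is contained in some ball $B_r$, and then refers back to the argument of Theorem~\ref{th:LyapAutSyst}. You have merely written out in full the details the paper leaves implicit (compactness and invariance of $\Omega_c$, the contradiction via $\gamma=\min_{x\in\Delta}(V(x)-V(f(x)))$, and the final passage from $V(x(t))\to 0$ to $x(t)\to 0$), all of which are consistent with the paper's intended argument.
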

\begin{proof}
Given any point $p\in\R^n$, let $c=V(p)$. Due to \eqref{eq:VradUnbounded}, for any $c>0$ there is $r>0$ such that $V(x)>c$ whenever $\|x\|>r$. Thus $\Omega_c\subset B_r$, and we can proceed as done in Theorem \ref{th:LyapAutSyst}
\end{proof}

If an equilibrium point is globally asymptotically stable, than it is the only possible equilibrium point of the system \eqref{eq:autonomousSyst}.\\
The following gives a condition for instability.
\begin{theorem}[Instability condition from Lyapunov]\label{theo:instability}
Let $x=0$ be an equilibrium point for the autonomous system
\[
	x(t+1) = f(x(t))
\]
where $f\colon D\to\R^n$ is locally Lipschitz in $D\subset \R^n$ and $0\in D$. Let $V\colon D\to\R$ be a continuous function such that $V(0)=0$ and $V(x_0)>0$ for some $x_0$ with arbitrary small $\|x_0\|$. Let $r>0$ be such that $B_r\subset D$ and $U=\{x\in B_r|V(x)>0\}$, and suppose that $V(f(x))-V(x)>0$ for all $x\in U$. Then $x=0$ is unstable.
\end{theorem}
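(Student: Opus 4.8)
The plan is to argue by contradiction: suppose $x=0$ is stable, and derive that a trajectory starting at the point $x_0$ (where $V(x_0)>0$) must leave the ball $B_r$, contradicting stability. The engine of the argument is that $V$ is strictly increasing along trajectories while they remain in $U$, so $V(x(t))$ is bounded below by $V(x_0)>0$ for all such $t$; combined with a uniform lower bound on the per-step increase of $V$ on the relevant compact region, this forces $V(x(t))\to\infty$, which is impossible on the bounded set $B_r$.

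Concretely, first I would fix $a = V(x_0)>0$ and observe that as long as the trajectory stays in $U$ we have $V(x(t))\geq V(x(t-1))\geq\dots\geq V(x_0)=a>0$, so the trajectory cannot approach $0$ and in fact stays in the region where $V\geq a$. Next, assume for contradiction that $x=0$ is stable; then for the given $r$ there is a $\delta$ with $\|x(0)\|<\delta\Rightarrow\|x(t)\|<r$ for all $t\geq 0$. Since $\|x_0\|$ can be taken arbitrarily small, choose the initial condition $x(0)=x_0$ with $\|x_0\|<\delta$, so the whole trajectory remains in $B_r$. I then claim the trajectory remains in $U$ for all $t$: it stays in $B_r$ by stability, and it stays in $\{V\geq a\}\subset\{V>0\}$ by the monotonicity just noted (an inductive argument — if $x(t)\in U$ then $V(x(t+1))>V(x(t))\geq a>0$ and $x(t+1)\in B_r$, hence $x(t+1)\in U$).

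Now consider the compact set $K=\{x\in \closure{B_r}\mid V(x)\geq a\}$, which contains the entire trajectory. On $K$ the continuous function $x\mapsto V(f(x))-V(x)$ attains a minimum $\gamma$, and since $K\subset U$ (as $V\geq a>0$ on $K$) we have $\gamma>0$ by hypothesis. Therefore $V(x(t+1))-V(x(t))\geq\gamma$ for every $t$, so $V(x(t))\geq a + t\gamma\to\infty$. But $\{x(t)\}\subset \closure{B_r}$, which is compact, and $V$ is continuous, so $V$ is bounded on $\closure{B_r}$ — contradiction. Hence $x=0$ cannot be stable, i.e.\ it is unstable.

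The main subtlety — and the step I would be most careful about — is justifying that the trajectory never exits $U$, in particular that it cannot escape through the part of $\partial U$ where $V=0$ rather than through $\|x\|=r$. This is exactly where the monotonicity $V(x(t))\geq a>0$ is essential: it keeps the trajectory bounded away from the set $\{V=0\}$, so the only way out of $U$ would be through $\|x\|=r$, which stability has ruled out. A secondary point to state carefully is that one must pick $\|x_0\|<\delta$ (possible since such $x_0$ exist with arbitrarily small norm) so that stability applies to that particular initial condition; everything else is a routine compactness-plus-telescoping estimate.
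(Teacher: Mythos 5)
Your argument is essentially the paper's proof run in contrapositive form. The paper argues directly that the trajectory must exit $B_r$, using exactly your three ingredients: monotonicity of $V$ along the trajectory keeps it inside $U$ unless it leaves $B_r$, a uniform positive lower bound on the per-step increment obtained by minimizing $V(f(x))-V(x)$ over a compact sublevel-complement, and the fact that $V$ cannot grow without bound on a bounded set. You instead assume stability and derive $V(x(t))\to\infty$; the logical content is the same, and your version of the bookkeeping (the induction showing $x(t+1)\in U$, and the explicit telescoping $V(x(t))\geq a+t\gamma$) is actually written out more carefully than the paper's.

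The one step that does not hold as written is the claim that $K=\{x\in\closure{B_r}\mid V(x)\geq a\}$ is contained in $U$. In this paper's convention $B_r$ is the \emph{open} ball, so points with $\|x\|=r$ and $V(x)\geq a$ lie in $K$ but not in $U$ (and $\closure{B_r}$ need not even be contained in $D$); the hypothesis $V(f(x))-V(x)>0$ therefore does not apply on all of $K$, and the minimum $\gamma$ over $K$ could fail to be positive. Your own justification ("as $V\geq a>0$ on $K$") only checks the sign condition in the definition of $U$, not the containment in $B_r$. The repair is immediate within your contradiction framework: invoke stability with $\epsilon=r/2$ rather than $r$, so the whole trajectory stays in $\closure{B_{r/2}}$, and take $\gamma$ to be the minimum of $V(f(x))-V(x)$ over the compact set $\{x\mid\|x\|\leq r/2,\ V(x)\geq a\}\subset U$. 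I note that the paper's own proof has the same blemish (it asserts that $\{x\in U\mid V(x)\geq a\}$ is compact, which fails for the same reason), so this is a shared and easily fixed technicality rather than a defect of your approach.
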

\begin{proof} Consider $x_0\in U$ and let $a=V(x_0)>0$. The set $\Sigma_a=\{x\in U| V(x)\geq a\}$ is compact, so we can define $\alpha = \min_{x\in\Sigma} (V(f(x))-V(x))$. There exists an instant $\bar{t}$ such that $x(t)\in U$ for $0\leq t<\bar{t}$ and $\|x(\bar{t})\|>r$ for $t=\bar{t}$. This holds because $V(f(x))>V(x)+\alpha,\ \forall x\in U$ so, at time instant $t$, $V(x(t+1)=V(f(x(t)))>V(x(t))>0$. Now if $\|x(t+1)\|\leq r$, then the trajectory is still in $U$, otherwise is in $D\setminus B_r$. The latter is true because to be in $B_r\setminus U$, $V(x(t+1))$ has to be smaller than 0, but $V(x(t+1))>0$. So starting from a point arbitrarily close to the origin the trajectory goes outside $B_r$, and therefore the origin is unstable.
\end{proof}
\section{The invariance principle}
\begin{definition}
A point $p$ is said to be a \textbf{positive limit point} of $x(t)$ if there is a sequence $\{t_n\}$, with $t_n\to\infty$ as $n\to\infty$ such that $x(t_n)\to p$ as $n\to\infty$. The set of all positive limit points of $x(t)$ is called \textbf{positive limit set} of $x(t)$. 
\end{definition}
\begin{definition}
A set $M$ is an \textbf{invariant set} with respect to \eqref{eq:autonomousSyst} if $x(0)\in M \Rightarrow x(t)\in M,\ \forall\ t\in \R$. It is a \textbf{positive invariant set} if $x(0)\in M \Rightarrow x(t)\in M,\ \forall\ t\geq 0$. 
\end{definition}
The trajectory $x(t)$ approaches $M$ as $t\to\infty$, if for each $\epsilon>0$ there is $T>0$ such that $\dist(x(t),M)<\epsilon,\ \forall\ t>T$, where $\dist(p,M) = \inf_{x\in M}\|p-x\|$. Note that this does not imply that $\lim_{t\to\infty}$ exists.\\
Equilibrium points and limit cycles are example of invariant sets for \eqref{eq:autonomousSyst}, and if a Lyapunov function $V$ for the latter system exists, then also the set $\Omega_c=\{x\in D|V(x)\leq c\}$ is an invariant set.

\begin{lemma} If a solution $x(t)$ of \eqref{eq:autonomousSyst} is bounded and belongs to $D$ for $t\geq 0$, then its positive limit set $L^+$ is a nonempty, compact, invariant set. Moreover, $x(t)$ approaches $L^+$ as $t\rightarrow\infty$.
\end{lemma}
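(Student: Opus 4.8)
The plan is to establish the four claimed properties of $L^+$ — nonemptiness, boundedness, closedness, and invariance — and then the approach property, relying throughout on the boundedness of the trajectory and on uniqueness of solutions (guaranteed by $f$ being locally Lipschitz). First I would treat nonemptiness and boundedness together. Since $x(t)$ is bounded and stays in $D$ for $t \ge 0$, its image lies in a compact set; by the Bolzano--Weierstrass theorem, for any sequence of times $t_n \to \infty$ the points $x(t_n)$ admit a convergent subsequence, whose limit is by definition a positive limit point, so $L^+ \neq \emptyset$. Moreover every point of $L^+$ is a limit of points of the trajectory and hence lies in the closure of that same bounded set, so $L^+$ is bounded.

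Next, closedness. I would take a sequence $\{p_k\} \subset L^+$ with $p_k \to p$ and show $p \in L^+$. For each $k$, using that $p_k$ is a positive limit point, I can select a time $\tau_k$, arranged to be strictly increasing with $\tau_k \to \infty$, such that $\|x(\tau_k) - p_k\| < 1/k$; then $\|x(\tau_k) - p\| \le \|x(\tau_k) - p_k\| + \|p_k - p\| \to 0$, which exhibits $p$ as a positive limit point. Hence $L^+$ is closed, and being closed and bounded it is compact.

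The invariance step is the heart of the argument and where I expect the main difficulty. Fix $p \in L^+$, so there is a sequence $t_n \to \infty$ with $x(t_n) \to p$, and let $y(t)$ denote the solution of $\dot x = f(x)$ with $y(0) = p$. The idea is to compare $y$ with the shifted trajectories $t \mapsto x(t_n + t)$, each of which is the solution issuing from the initial condition $x(t_n)$. By continuous dependence of solutions on initial data — which is exactly where local Lipschitzness enters, through uniqueness of solutions — one has $x(t_n + t) \to y(t)$ as $n \to \infty$ for each fixed $t$. The delicate point I must control is that boundedness of the trajectory confines these solutions to a fixed compact subset of $D$, so that $y(t)$ suffers no finite-time escape and is in fact defined for all $t \in \R$; granting this, since $t_n + t \to \infty$ the limit $y(t)$ is itself a positive limit point, i.e. $y(t) \in L^+$ for every $t$, which is precisely invariance of $L^+$.

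Finally, to show that $x(t)$ approaches $L^+$ as $t \to \infty$, I would argue by contradiction. If this failed, there would exist $\eps > 0$ and times $t_n \to \infty$ with $\dist(x(t_n), L^+) \ge \eps$. Boundedness of the sequence $x(t_n)$ then yields a convergent subsequence $x(t_{n_k}) \to q$; by definition $q \in L^+$, yet $\dist(q, L^+) \ge \eps > 0$ is impossible. Hence $\dist(x(t), L^+) \to 0$, completing the proof.
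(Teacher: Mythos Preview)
Your argument is the standard one and matches the paper's approach, which is simply to cite Appendix~C.3 of Khalil (the continuous-time version of this lemma) and leave the adaptation implicit.

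One slip worth fixing: in the invariance step you write ``the solution of $\dot x = f(x)$'' and worry about finite-time escape, but \eqref{eq:autonomousSyst} is the \emph{discrete-time} iteration $x(t+1)=f(x(t))$. In discrete time the argument is actually simpler: forward invariance of $L^+$ follows directly from continuity of $f$, since $x(t_n)\to p$ gives $x(t_n+1)=f(x(t_n))\to f(p)$ and hence $f(p)\in L^+$; there is no finite-time escape issue because forward iterates are always defined. For backward invariance (needed for full invariance as defined in the paper), take $p\in L^+$ with $x(t_n)\to p$, pass to a convergent subsequence of the bounded sequence $x(t_n-1)\to q$, and use continuity of $f$ to get $f(q)=p$ with $q\in L^+$. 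With this adjustment your proof goes through unchanged.
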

\begin{proof}
The proof can be obtained from appendix C3 of \cite{Khalil} 
\end{proof}

The following is known as LaSalle's theorem
\begin{theorem}[LaSalle's theorem]
Let $\Omega\subset D$ be a compact set that is positively invariant with respect to the autonomous system
\[
	x(t+1) = f(x(t))
\]
where $f\colon D\to\R^n$ is locally Lipschitz in $D\subset \R^n$ and $0\in D$. Let $V:D\rightarrow\R$ be a continuous function such that $V(f(x))-V(x)\leq 0$ in $\Omega$. Let $E$ be the set of all points in $\Omega$ where $V(f(x))-V(x)=0$, and let $M$ be the largest invariant set in $E$. Then every solution starting in $\Omega$ approaches $M$ as $t\to\infty$.
\end{theorem}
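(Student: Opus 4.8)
The plan is to mimic the classical continuous-time proof of LaSalle's theorem, replacing the flow with the discrete-time iteration. The key ingredients are the boundedness lemma just proved (positive limit sets of bounded trajectories are nonempty, compact, invariant, and attract the trajectory) together with monotonicity of $V$ along trajectories in $\Omega$. So first I would fix an arbitrary solution $x(t)$ with $x(0)\in\Omega$. Since $\Omega$ is positively invariant, $x(t)\in\Omega$ for all $t\ge0$; since $\Omega$ is compact, the trajectory is bounded and stays in $D$. Hence the preceding lemma applies: the positive limit set $L^+$ of $x(t)$ is nonempty, compact, invariant, $L^+\subset\Omega$ (because $\Omega$ is closed and positively invariant), and $x(t)\to L^+$ as $t\to\infty$.

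Next I would show $V$ is constant on $L^+$. Along the trajectory, $V(x(t+1))-V(x(t))\le0$, so $t\mapsto V(x(t))$ is nonincreasing; it is bounded below because $V$ is continuous on the compact set $\Omega$, hence $V(x(t))\to a$ for some finite limit $a$. Now take any $p\in L^+$: there is a subsequence $x(t_n)\to p$, and by continuity of $V$ we get $V(p)=\lim_n V(x(t_n))=a$. Thus $V\equiv a$ on $L^+$.

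Then I would argue $L^+\subset E$. Let $p\in L^+$; by invariance of $L^+$, the point $f(p)$ also lies in $L^+$, so $V(f(p))=a=V(p)$, i.e. $V(f(p))-V(p)=0$, which says exactly $p\in E$. Since $L^+$ is an invariant set contained in $E$, and $M$ is by definition the largest invariant set in $E$, we conclude $L^+\subset M$. Finally, because $x(t)$ approaches $L^+$ and $L^+\subset M$, we have $\dist(x(t),M)\le\dist(x(t),L^+)\to0$, so $x(t)$ approaches $M$ as $t\to\infty$, completing the proof.

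The step requiring the most care is the invariance claim for $L^+$ in the discrete-time setting and its interplay with the one-step map: one must be sure that ``invariant'' here means $f(L^+)\subseteq L^+$ (indeed $f(L^+)=L^+$), so that $p\in L^+$ forces $f(p)\in L^+$ — this is what makes the constancy of $V$ on $L^+$ upgrade to $p\in E$. The other mild subtlety is justifying $L^+\subset\Omega$: since $\Omega$ is closed and contains the whole forward orbit, every subsequential limit of the orbit lies in $\Omega$, so $V$ is well defined and continuous there and the minimum/limit arguments go through. Everything else is a routine transcription of the continuous-time argument, with sequences $\{t_n\}\subset\mathbb{N}$ in place of time sequences in $\mathbb{R}$.
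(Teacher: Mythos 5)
Your proof is correct and is exactly the standard argument that the paper defers to by citing Theorem~4.4 of Khalil: it uses the preceding lemma on positive limit sets, shows $V$ is constant on $L^+$, deduces $L^+\subset E$ and hence $L^+\subset M$ by maximality, and concludes via $x(t)\to L^+$. Since the paper gives no proof of its own beyond the citation, your write-up simply supplies the intended argument, and the subtleties you flag (forward invariance $f(L^+)\subseteq L^+$ and closedness of $\Omega$ giving $L^+\subset\Omega$) are handled correctly.
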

\begin{proof}
Can be obtained from proof of Theorem 4.4 page 128 in \cite{Khalil}.
\end{proof}

\begin{corollary}
Let $x=0$ be an equilibrium point for the autonomous system
\[
	x(t+1) = f(x(t))
\]
where $f\colon D\to\R^n$ is locally Lipschitz in $D\subset \R^n$ and $0\in D$. Let $V\colon D\rightarrow\R$ be a continuous positive definite function on a domain $D,\ x\in D$, such that $V(f(x))-V(x)\leq 0$ in $D$. Let $S=\{x\in D|V(f(x))-V(x)= 0\}$ and suppose that no solution can stay identically in $S$ other than the trivial solution $x(t)\equiv 0$. Then the origin is asymptotically stable.
\end{corollary}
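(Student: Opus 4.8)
The plan is to combine the stability half of Theorem~\ref{th:LyapAutSyst} with LaSalle's theorem applied to a small sublevel set of $V$. Since $V$ is continuous, positive definite, and satisfies $V(f(x))-V(x)\le 0$ on $D$, the hypotheses \eqref{eq:VposDef} and \eqref{eq:Vdecr} of Theorem~\ref{th:LyapAutSyst} hold, so $x=0$ is already stable; it therefore remains only to exhibit a $\delta>0$ such that $\|x(0)\|<\delta$ forces $x(t)\to 0$.

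First I would reuse the $\Omega_\beta$ construction from the proof of Theorem~\ref{th:LyapAutSyst}: pick $r>0$ with $B_r\subset D$, set $\alpha=\min_{\|x\|=r}V(x)>0$, choose $c\in(0,\alpha)$, and let $\Omega_c=\{x\in B_r\mid V(x)\le c\}$. Then $\Omega_c$ is compact, lies in the interior of $B_r\subset D$, and is positively invariant because $V$ is nonincreasing along trajectories; moreover, by continuity of $V$ and $V(0)=0$ there is $\delta>0$ with $B_\delta\subset\Omega_c$. Now apply LaSalle's theorem with $\Omega=\Omega_c$: every solution starting in $\Omega_c$ approaches $M$, the largest invariant set contained in $E=\{x\in\Omega_c\mid V(f(x))-V(x)=0\}$. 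Since $E\subset S$ and $M\subset E\subset S$ with $M$ invariant, any solution that starts in $M$ stays in $S$ for all time; by hypothesis the only such solution is $x(t)\equiv 0$, hence $M=\{0\}$ (note $0\in E$ and $\{0\}$ is invariant, so indeed $0\in M$).

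Consequently every solution with $x(0)\in\Omega_c$ satisfies $\dist(x(t),\{0\})=\|x(t)\|\to 0$; in particular $\|x(0)\|<\delta$ implies $x(0)\in\Omega_c$ and thus $x(t)\to 0$, which together with the stability already established gives asymptotic stability of the origin. I expect the only delicate points to be the bookkeeping that makes LaSalle applicable --- verifying that $\Omega_c$ is genuinely compact, positively invariant, and contained in $D$, which is exactly the argument already carried out for $\Omega_\beta$ --- and the short logical step that ``largest invariant set in $E$'', together with $E\subseteq S$ and the no-nontrivial-trajectory-in-$S$ assumption, forces $M=\{0\}$, after which convergence to the origin is immediate.
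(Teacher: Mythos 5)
Your argument is correct and is exactly the intended derivation: the paper states this corollary immediately after LaSalle's theorem without proof, and the standard route is precisely what you do --- get stability from Theorem~\ref{th:LyapAutSyst}, build the compact positively invariant sublevel set $\Omega_c$ as in that theorem's proof, apply LaSalle on $\Omega_c$, and use the no-nontrivial-solution-in-$S$ hypothesis to force $M=\{0\}$. The one point worth being aware of (which the paper itself also glosses over in its proof of Theorem~\ref{th:LyapAutSyst}) is that in discrete time the positive invariance of $\Omega_c$ needs the extra observation that $f(x)$ cannot leave $B_r$ while $V(f(x))\le c<\alpha$, since a discrete trajectory could in principle jump past the shell $\{\|x\|=r\}$; this is handled by taking $\Omega_c$ to be the connected component containing the origin or by shrinking $r$ using continuity of $f$, but your bookkeeping matches the paper's own level of rigor.
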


\begin{corollary}
Let $x=0$ be an equilibrium point for the autonomous system
\[
	x(t+1) = f(x(t))
\]
where $f\colon D\to\R^n$ is locally Lipschitz in $D\subset \R^n$ and $0\in D$. Let $V\colon \R^n\rightarrow\R$ be a continuous, positive definite, radially unbounded function, such that $V(f(x))-V(x)\leq 0,\ \forall x\in\R^n$. Let $S=\{x\in \R^n|V(f(x))-V(x)= 0\}$ and suppose that no solution can stay identically in $S$ other than the trivial solution $x(t)\equiv 0$. Then the origin is {\bf globally}  asymptotically stable.
\end{corollary}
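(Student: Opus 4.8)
The plan is to deduce this global statement from LaSalle's theorem by exhausting $\R^n$ with the sublevel sets of $V$, exactly as Theorem~\ref{the:globallyStable} was deduced from Theorem~\ref{th:LyapAutSyst}. First I would fix an arbitrary initial condition $x(0)\in\R^n$ and set $c=V(x(0))$. Using that $V$ is continuous and radially unbounded, I would show that the sublevel set $\Omega_c=\{x\in\R^n\mid V(x)\le c\}$ is compact: boundedness is precisely the contrapositive of \eqref{eq:VradUnbounded}-type radial unboundedness (if $\Omega_c$ were unbounded there would be a sequence with $\|x_k\|\to\infty$ and $V(x_k)\le c$, contradicting $V(x)\to\infty$), and closedness follows from continuity of $V$. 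Using $V(f(x))-V(x)\le 0$ for all $x\in\R^n$, I would then show $\Omega_c$ is positively invariant, since along any trajectory $V(x(t))$ is nonincreasing and hence stays $\le c$.

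Next I would apply LaSalle's theorem with $\Omega=\Omega_c$: every solution starting in $\Omega_c$ approaches $M$, the largest invariant set contained in $E=\{x\in\Omega_c\mid V(f(x))-V(x)=0\}\subseteq S$. The hypothesis that no solution can stay identically in $S$ other than $x(t)\equiv 0$ forces $M\subseteq\{0\}$; since $0\in E$ (as $f(0)=0$ gives $V(f(0))-V(0)=0$) and $\{0\}$ is invariant, in fact $M=\{0\}$. Hence $x(t)\to 0$ as $t\to\infty$, and since $x(0)$ was arbitrary, the origin is globally attractive.

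Finally I would observe that $V$ restricted to $D$ is a Lyapunov function for the system in the sense of the definition following Theorem~\ref{th:LyapAutSyst} (it is positive definite and satisfies $V(f(x))-V(x)\le 0$), so Theorem~\ref{th:LyapAutSyst} already gives that the origin is stable. Stability together with global attractivity is, by definition, global asymptotic stability, which completes the argument.

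The step I expect to be the main obstacle is the identification $M=\{0\}$: one has to be a little careful that the phrase ``no solution can stay identically in $S$'' in the hypothesis really matches the notion of ``invariant set contained in $E$'' invoked by LaSalle's theorem, i.e.\ that any point of $M$ generates (forward) a trajectory lying entirely in $M\subseteq S$, so that $M$ nontrivial would contradict the hypothesis. Modulo that bookkeeping everything else is routine; the only other point worth stressing is that compactness of $\Omega_c$ genuinely uses radial unboundedness, without which LaSalle's theorem would not be applicable on an unbounded $\Omega_c$.
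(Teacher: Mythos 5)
The paper states this corollary without proof (both corollaries following LaSalle's theorem are left unproven), so there is nothing to compare against line by line; judged on its own, your argument is correct and is exactly the intended derivation: compactness and positive invariance of the sublevel set $\Omega_c$ from radial unboundedness and monotonicity of $V$, LaSalle's theorem to conclude convergence to the largest invariant set $M\subseteq S$, the non-trivial-solution hypothesis to force $M=\{0\}$, and Theorem~\ref{th:LyapAutSyst} for stability of the origin. Your flagged concern about matching ``no solution stays identically in $S$'' with ``invariant set in $E$'' is handled correctly (any point of $M$ generates a forward trajectory contained in $M\subseteq S$, hence must be the origin), so the proof is complete.
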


LaSalle principle is useful because
\begin{itemize}
\item it gives an estimate of the region of attraction of the equilibrium point. It can be any compact positively invariant set;
\item there is an equilibrium set and not an isolated equilibrium point;
\item function $V(x)$ does not have to be positive definite;
\item in case of the corollaries it relaxes the negative definiteness on $V(f(x))-V(x)$.
\end{itemize}

Before going to the linearisation part, we prove the following theorem concerning systems with exponentially asymptotic equilibrium points (see \cite[Ex. 4.68]{Khalil})
\begin{theorem}[Exponential stability  implies existence of a Lyapunov function]\label{theo:expStabImpliesPresenceofLyapAuto}
Let $x=0$ be an equlibrium point for the nonlinear system the autonomous system
\[
	x(t+1) = f(x(t))
\]
where $f\colon D\to\R^n$ is continuously differentiable and $D=\{x\in\R^n|\ \|x\|<r\}$. Let $k,\lambda,$ and $r_0$ be positive constants with $r_0<r/k$. Let $D_0=\{x\in\R^n|\ \|x\|<r_0\}$. Assume that the solutions of the system satisfy
\begin{equation}\label{eq:exponentialStability}
	\|x(t)\|\leq k\|x(0)\|e^{-\lambda t},\ \forall x(0)\in D_0,\ \forall t\geq 0
\end{equation}
Show that there is a function $V\colon D_0\to\R$ that satisfies
\[
	\begin{array}{c}
	c_1\|x\|^2\leq V(x) \leq c_2 \|x\|^2\\
	V(f(x))-V(x)\leq -c_3 \|x\|^2\\
	|V(x)-V(y)|\leq c_4 \|x-y\|(\|x\|+\|y\|)
	\end{array}
\]
for all $x,y\in D_0$ and for some positive constants $c_1,c_2,c_3$ and $c_4$.
\end{theorem}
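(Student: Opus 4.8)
The plan is to mimic the classical converse Lyapunov construction for exponentially stable systems (the one behind Khalil's Example 4.68), replacing the finite‑horizon integral used in continuous time by a finite sum. Denote by $\phi(j,x)$ the state at time $j$ of the solution of $x(t+1)=f(x(t))$ with $\phi(0,x)=x$, so that $\phi(j+1,x)=f(\phi(j,x))$ and, by an easy induction, $\phi(j,f(x))=\phi(j+1,x)$. First I would record two preliminaries. Evaluating \eqref{eq:exponentialStability} at $t=0$ forces $k\ge 1$, so the hypothesis $r_0<r/k$ gives $kr_0<r$; hence for every $x\in D_0$ the bound \eqref{eq:exponentialStability} yields $\|\phi(j,x)\|\le k\|x\|e^{-\lambda j}<kr_0<r$, so the whole trajectory stays in the compact set $\overline{B_{kr_0}}\subset D$ and the iteration is well defined for all $j\ge0$. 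Since $f$ is $C^1$, hence locally Lipschitz on $D$, it is Lipschitz on $\overline{B_{kr_0}}$ with some constant $L$; consequently $\|\phi(j,x)-\phi(j,y)\|\le L^j\|x-y\|$ for all $x,y\in D_0$ and all $j$, by induction on $j$ using that both trajectories remain in $\overline{B_{kr_0}}$.

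Next, fix an integer $N\ge1$ large enough that $k^2e^{-2\lambda N}<1$ (possible since $\lambda>0$), and define
\[
V(x)=\sum_{j=0}^{N-1}\|\phi(j,x)\|^2 .
\]
The quadratic bounds are then immediate: on one hand $V(x)\ge\|\phi(0,x)\|^2=\|x\|^2$, so $c_1=1$; on the other hand $V(x)\le\sum_{j=0}^{N-1}k^2\|x\|^2e^{-2\lambda j}\le\frac{k^2}{1-e^{-2\lambda}}\|x\|^2$, so $c_2=\frac{k^2}{1-e^{-2\lambda}}$. For the decrease condition I would use the composition property to telescope: $V(f(x))=\sum_{j=0}^{N-1}\|\phi(j,f(x))\|^2=\sum_{j=1}^{N}\|\phi(j,x)\|^2$, hence
\[
V(f(x))-V(x)=\|\phi(N,x)\|^2-\|x\|^2\le\bigl(k^2e^{-2\lambda N}-1\bigr)\|x\|^2,
\]
so $c_3=1-k^2e^{-2\lambda N}>0$ by the choice of $N$.

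Finally, for the incremental (Lipschitz‑type) bound I would write, for $x,y\in D_0$,
\[
|V(x)-V(y)|\le\sum_{j=0}^{N-1}\bigl|\,\|\phi(j,x)\|^2-\|\phi(j,y)\|^2\,\bigr|
\le\sum_{j=0}^{N-1}\|\phi(j,x)-\phi(j,y)\|\,\bigl(\|\phi(j,x)\|+\|\phi(j,y)\|\bigr),
\]
using $\bigl|\,\|a\|-\|b\|\,\bigr|\le\|a-b\|$ and $\|a\|^2-\|b\|^2=(\|a\|-\|b\|)(\|a\|+\|b\|)$. Plugging in the two preliminary estimates $\|\phi(j,x)-\phi(j,y)\|\le L^j\|x-y\|$ and $\|\phi(j,x)\|+\|\phi(j,y)\|\le k\,e^{-\lambda j}(\|x\|+\|y\|)\le k(\|x\|+\|y\|)$ gives
\[
|V(x)-V(y)|\le k\Bigl(\sum_{j=0}^{N-1}L^j\Bigr)\|x-y\|\,(\|x\|+\|y\|),
\]
so $c_4=k\sum_{j=0}^{N-1}L^j$ (equal to $kN$ when $L=1$) completes the proof.

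I expect the only genuine subtlety to be the bookkeeping that makes $V$ well defined and the estimates uniform: one must observe that trajectories issued from $D_0$ remain in a fixed compact subset of $D$, which is exactly where the hypothesis $r_0<r/k$ is used, so that a single Lipschitz constant $L$ can be extracted and the exponential bound applied at every step. Everything else is the discrete analogue of the standard argument with sums in place of integrals; the finite horizon $N$ (rather than an infinite sum) is essential precisely for the incremental bound, since $\sum_j L^j e^{-\lambda j}$ need not converge when the Lipschitz constant $L$ is large.
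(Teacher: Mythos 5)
Your proposal is correct and follows essentially the same route as the paper's proof: the finite-horizon sum $V(x)=\sum_{j=0}^{N-1}\|\phi(j,x)\|^2$ with $N$ chosen so that $k^2e^{-2\lambda N}<1$, the telescoping identity $\phi(j,f(x))=\phi(j+1,x)$ for the decrease condition, and the iterated Lipschitz bound $\|\phi(j,x)-\phi(j,y)\|\le L^j\|x-y\|$ for the incremental estimate. Your added bookkeeping (trajectories staying in $\overline{B_{kr_0}}\subset D$, the observation that $k\ge1$) only makes explicit what the paper leaves implicit.
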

\begin{proof}
Let $\phi(t,x)$ be the solution of $x(t+1)=f(x(k))$ at time $t$ starting from $x(0)=x$ at time $k=0$. Let
\[
	V(x) = \sum_{t=0}^{N-1}\phi\T(t,x)\phi(t,x)
\]
for some integer variable $N$ to be set. Then 
\[V(x) = x\T x+\sum_{t=1}^{N-1}\phi\T(t,x)\phi(t,x)\geq x\T x = \|x\|\]
and on the other hand, using \eqref{eq:exponentialStability} we have
\[
	V(x) = \sum_{t=0}^{N-1}x(t)\T x(t)\leq  \sum_{t=0}^{N-1} k^2\|x\|^2e^{-2\lambda t}\leq k^2\left(\frac{1-e^{-2\lambda N}}{1-e^{-2\lambda}}\right)\|x\|^2
\]
We have shown that there exists $c_1$ and $c_2$ such that
\[
	c_1\|x\|^2\leq V(x) \leq c_2 \|x\|^2
\]
is satisfied. Now, since $\phi(t,f(x)) = \phi(t,\phi(1,x))=\phi(t+1,x)$,
\begin{align*}
	V(f(x))-V(x) &= \sum_{t=0}^{N-1}\phi\T(t+1,x)\phi(t+1,x)-\sum_{t=0}^{N-1}\phi\T(t,x)\phi(t,x)=\\
	 			&=\sum_{j=1}^{N}\phi\T(j,x)\phi(j,x)-\sum_{t=0}^{N-1}\phi\T(t,x)\phi(t,x) = \phi\T(N,x)\phi(N,x)-x\T x\\
	 			&\leq k^2e^{-2\lambda N}\|x\|^2-\|x\|^2 = -(1-k^2e^{-2\lambda N})\|x\|^2
\end{align*}
Now we can choose $N$ big enough so that $1-k^2e^{-2\lambda N}$ is greater than $0$ and also the second property has been proven. For the third property, since $f$ is continuously differentiable it is also Lipschitz over the bounded domain $D$, with a Lipschitz constant $L$, for which it holds $\|f(x)-f(y)\|\leq L\|x-y\|$. Then 
\[
	\|\phi(t+1,x)-\phi(t+1,y)\|=\|f(\phi(t,x))-f(\phi(t,y))\|\leq L\|\phi(t,x)-\phi(t,y)\|
\]
and by induction
\[
	\|\phi(t,x)-\phi(t,y)\|\leq L^t\|x-y\|
\]
Consider now
\begin{align*}
	|V(x)-V(y)|&=\left\vert \sum_{t=0}^{N-1}(\phi\T(t,x)\phi(t,x)-\phi\T(t,y)\phi(t,y))\right\vert\\
			& = \left\vert \sum_{t=0}^{N-1}[\phi\T(t,x)(\phi(t,x)-\phi(t,y))+\phi\T(t,y)(\phi(t,x)-\phi(t,y))]\right\vert\\
			&\leq \sum_{t=0}^{N-1}[\|\phi\T(t,x)\|\|\phi(t,x)-\phi(t,y)\|+\|\phi\T(t,y)\|\|\phi(t,x)-\phi(t,y)\|]\\
			&\leq \sum_{t=0}^{N-1}[\|\phi\T(t,x)\|+\|\phi\T(t,y)\|]L^t\|x-y\|\\
			&\leq \left[\sum_{t=0}^{N-1} ke^{-\lambda t} L^t\right](\|x\|+\|y\|)\|x-y\|\\
			&\leq c_4 (\|x\|+\|y\|)\|x-y\|
\end{align*}
and so we have proven the last inequality.
\end{proof}

\section{Linear systems and Linearization}
Consider the linear time-invariant system
\begin{equation} \label{eq:linSystTimeInvariant}
x(t+1) = Ax(t),\ A\in\R^{n\times n}
\end{equation}
It has an equilibrium point in the origin $x=0$. The solution of the linear system starting from $x_0\in\R^n$ has the form
\[
	x(t) = A^tx(0)
\]
We have the following result on the stability of linear systems
\begin{theorem} \label{th:stabilityEigval}
The equilibrium point $x=0$ of the linear time-invariant system
\[
x(t+1) = Ax(t),\ A\in\R^{n\times n}
\]
is stable if and only if all the eigenvalues of $A$ satisfy $|\lambda_i|\leq 1$ and the algebraic and geometric multiplicity of the eigenvalues with absolute value 1 coincide. The equilibrium point $x=0$ is globally asymptotically stable if and only if all the eigenvalues of $A$ are such that $|\lambda_i|<1$.
\end{theorem}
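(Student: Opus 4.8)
The plan is to reduce both statements to the asymptotic behaviour of the matrix powers $A^t$, since $x(t)=A^tx(0)$, and then read that behaviour off from the Jordan canonical form of $A$.

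First I would show that the origin is stable if and only if $M:=\sup_{t\ge0}\|A^t\|<\infty$. Indeed, if the powers are uniformly bounded by $M$, then given $\epsilon>0$ the choice $\delta=\epsilon/M$ works, since $\|x(0)\|<\delta$ gives $\|x(t)\|=\|A^tx(0)\|\le M\|x(0)\|<\epsilon$ for all $t$. Conversely, if the origin is stable, apply the definition with $\epsilon=1$ to obtain $\delta>0$; then for every unit vector $v$ the initial condition $x(0)=(\delta/2)v$ keeps $\|x(t)\|<1$, so $\|A^tv\|\le2/\delta$, whence $\|A^t\|\le2/\delta$ for all $t$. The identical argument shows that the origin is globally asymptotically stable if and only if $A^t\to0$ as $t\to\infty$ (convergence of $x(t)$ to $0$ for every $x(0)$ is equivalent to $A^t\to0$, and this already forces boundedness, hence stability).

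Next I would compute $A^t$ through a Jordan decomposition $A=PJP^{-1}$ over $\mathbb C$, so that $A^t=PJ^tP^{-1}$ and $\|A^t\|$ is comparable, up to the fixed constants $\|P\|,\|P^{-1}\|$, to $\|J^t\|$; as $A$ is real, passing to the complex operator norm only changes norms by an absolute constant, which is harmless. For a single Jordan block of size $m$ with eigenvalue $\lambda$, the entries of its $t$-th power are the numbers $\binom{t}{k}\lambda^{t-k}$ for $0\le k\le m-1$. Hence: if $|\lambda|<1$ every such entry tends to $0$ (geometric decay beats polynomial growth) and is bounded; if $|\lambda|=1$ and $m=1$ the block power has modulus $1$, bounded but not tending to $0$; if $|\lambda|=1$ and $m\ge2$ the entry $\binom{t}{1}\lambda^{t-1}=t\lambda^{t-1}$ is unbounded; and if $|\lambda|>1$ the diagonal entry $\lambda^t$ is unbounded. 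Assembling the blocks: $\{A^t\}$ is bounded iff every eigenvalue satisfies $|\lambda_i|\le1$ and every eigenvalue on the unit circle has all its Jordan blocks of size $1$, i.e.\ its algebraic and geometric multiplicities coincide; and $A^t\to0$ iff every eigenvalue satisfies $|\lambda_i|<1$. Combining this with the first step proves both claims.

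The only genuine work is the block-power computation together with the elementary fact that $t^k|\lambda|^t\to0$ when $|\lambda|<1$; the one subtlety to watch is the passage between the real system and the complex Jordan form, which I would dispatch with the remark that all norms on $\mathbb R^{n\times n}$ and on its complexification are equivalent, so the growth or decay of $\|A^t\|$ is basis- and field-independent. An alternative route for the asymptotic-stability half would be to exhibit the quadratic Lyapunov function $V(x)=x^\top Px$ with $P=\sum_{s\ge0}(A^\top)^sA^s$ and invoke Theorem~\ref{the:globallyStable}, but proving that this series converges with $P$ positive definite again needs the spectral radius below $1$, so it does not bypass the spectral analysis; I would therefore keep the Jordan-form argument as the primary proof.
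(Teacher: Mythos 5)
Your proof is correct and complete: the reduction of stability to $\sup_{t\ge 0}\|A^t\|<\infty$ and of global asymptotic stability to $A^t\to 0$, followed by the Jordan-block power computation (with the observation that a unit-modulus eigenvalue is harmless exactly when all its blocks are $1\times 1$, i.e.\ algebraic and geometric multiplicities agree), is the standard argument and every step checks out. Note that the paper states this theorem without any proof at all, so there is nothing to compare against; your write-up would in fact fill that gap, and your closing remark is apt that the Lyapunov-equation route cannot replace the spectral analysis for the "only if" directions.
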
 

A matrix $A$ with all the eigenvalues in absolute value smaller than 1 is called a \textbf{Schur matrix}, and it holds that the origin is asymptotically stable if and only if matrix $A$ is Schur.

To use Lyapunov theory for linear system we can introduce the following candidate
\[
	V(x) = x\T Px
\]
with $P$ a symmetric positive definite matrix. It's total difference is
\[
	V(f(x))-V(x) = x\T A\T PAx-x\T Px = x\T(A\T PA-P)x:=-x\T Qx
\]
Using Theorem \ref{th:LyapAutSyst} we have that if $Q$ is positive-semidefinite the origin is stable, whether if $Q$ is positive definite the origin is asymptotically stable.
Fixing a positive definite matrix $Q$, if the solution of the Lyapunov equation
\begin{equation}\label{eq:lyapEq}
	A\T PA-P = -Q
\end{equation}
with respect to $P$ is positive definite, then the trajectories converge to the origin.
\begin{theorem}[Lyapunov for linear time invariant systems]
 A matrix $A$ is Schur if and only if, for any positive definite matrix $Q$ there exists a positive definite symmetric matrix $P$ that satisfies \eqref{eq:lyapEq}. Moreover if $A$ is Schur, then $P$ is the unique solution of \eqref{eq:lyapEq}.
\end{theorem}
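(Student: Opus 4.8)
The plan is to prove the two implications separately and then settle uniqueness.

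For the ``if'' direction, suppose that for every positive definite $Q$ there is a positive definite symmetric $P$ with $A\T P A - P = -Q$; in particular fix $Q=I$ and let $P$ be the corresponding solution. Set $V(x)=x\T P x$. This $V$ is continuous, $V(0)=0$, $V(x)>0$ for $x\neq 0$, and $\|x\|\to\infty$ forces $V(x)\to\infty$ since $P$ is positive definite; moreover $V(f(x))-V(x)=x\T(A\T P A-P)x=-\|x\|^2<0$ for $x\neq 0$. Thus $V$ meets the hypotheses of Theorem~\ref{the:globallyStable}, so the origin is globally asymptotically stable, and Theorem~\ref{th:stabilityEigval} then gives $|\lambda_i|<1$ for all eigenvalues of $A$, i.e. $A$ is Schur.

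For the ``only if'' direction, assume $A$ is Schur. First I would record that $A^t\to 0$ as $t\to\infty$: by Theorem~\ref{th:stabilityEigval} the origin is globally asymptotically stable and the solution is $x(t)=A^t x(0)$, so $A^t x\to 0$ for every $x$, hence $\|A^t\|\to 0$ in the operator norm. Consequently there is an integer $m$ with $\|A^m\|\le q<1$, and writing $t=\ell m+s$ with $0\le s<m$ yields a geometric bound $\|A^t\|\le C\rho^t$ for suitable $C>0$ and $\rho\in(0,1)$. This makes the series
\[
	P := \sum_{t=0}^{\infty} (A^\top)^t\, Q\, A^t
\]
absolutely convergent, since $\|(A^\top)^t Q A^t\|\le C^2\|Q\|\rho^{2t}$. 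Each partial sum is symmetric, so $P$ is symmetric; and for $x\neq 0$ one has $x\T P x=\sum_{t\ge 0}(A^t x)\T Q (A^t x)\ge x\T Q x>0$, so $P$ is positive definite. Finally, shifting the summation index,
\[
	A\T P A - P = \sum_{t=0}^{\infty}(A^\top)^{t+1} Q A^{t+1} - \sum_{t=0}^{\infty}(A^\top)^{t} Q A^{t} = -Q ,
\]
so $P$ solves \eqref{eq:lyapEq}. For uniqueness under $A$ Schur, suppose $P_1,P_2$ both satisfy \eqref{eq:lyapEq} and put $\Delta=P_1-P_2$; subtracting the equations gives $A\T\Delta A=\Delta$, hence by iteration $\Delta=(A^\top)^t\Delta A^t$ for all $t\ge 0$, and letting $t\to\infty$ with $A^t\to 0$ forces $\Delta=0$.

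The main obstacle I expect is the convergence of the series defining $P$: ``$A$ Schur'' is a statement about the spectral radius, and one must upgrade it to geometric decay of $\|A^t\|$. The cleanest route, given what the paper already provides, is to invoke Theorem~\ref{th:stabilityEigval} to get $A^t\to 0$ and then bootstrap to $\|A^t\|\le C\rho^t$ via submultiplicativity as above; once that bound is in hand, symmetry, positive definiteness, the telescoping identity, and the uniqueness argument are all routine.
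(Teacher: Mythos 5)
Your proof is correct and follows essentially the same route as the paper: the sufficiency via the quadratic Lyapunov function together with Theorem~\ref{th:stabilityEigval}, the explicit construction $P=\sum_{t\ge 0}(A\T)^tQA^t$ with the telescoping identity, and uniqueness by iterating $A\T\Delta A=\Delta$ and letting $t\to\infty$. The only difference is that you supply the geometric bound $\|A^t\|\le C\rho^t$ justifying convergence of the series, a detail the paper leaves implicit; that is a welcome addition, not a divergence in method.
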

\begin{proof} Sufficiency can be obtained combining Theorem \ref{th:stabilityEigval} and the fact that the existence of solution $P$ for any positive definite matrix $Q$ assures the convergence of the trajectory. Suppose now that $A$ is Schur stable and build matrix $P$ as
\begin{equation}\label{eq:Pconstr}
	P = \sum_{t=0}^{\infty} (A\T)^t Q A^t
\end{equation}
for any positive definite $Q$. Matrix $P$ is symmetric and positive definite since $Q$ is positive definite. We need to show that using this $P$ equation \eqref{eq:lyapEq} is satisfied. Substituting  \eqref{eq:Pconstr} in \eqref{eq:lyapEq} we obtain
\[
	A\T \sum_{t=0}^{\infty} \left[(A\T)^t Q A^t\right] A-\sum_{t=0}^{\infty} (A\T)^t Q A^t = \sum_{t=0}^{\infty} \left[(A\T)^{t+1} Q A^{t+1}-(A\T)^t Q A^t\right] = -Q
\]
Suppose now that $P$ is not unique, so there exists a $\tilde{P}\neq P$ such that \eqref{eq:lyapEq} is satisfied. So it holds
\[
	A\T PA-P = A\T \tilde{P}A-\tilde{P} \Leftrightarrow A\T (P-\tilde{P}) A-(P-\tilde{P}) = 0
\]
from which, defining $R(x):= x\T(P-\tilde{P})x$ it follows that 
\[
	R(f(x))-R(x) = 0,\ \forall x\in\R^n \Leftrightarrow R(x(0)) = R(x(t)),\ \forall t\geq 0.
\]
Now it holds that
\[
	\lim_{t\to\infty} R(x(t)) = \lim _{t\to\infty} x(0)\T (A\T)^t (P-\tilde{P})A^t x(0)=0,\ \forall x\in\R^n
\]
since $A$ is Schur stable, so due to the fact that $R(x(0)) = R(x(t))$, we have that 
\[
	x\T(P-\tilde{P})x = 0,\ \forall x\in\R^n \Leftrightarrow P-\tilde{P} = 0
\]
and so the solution is unique.
\end{proof}

Let us consider again the nonlinear model
\[
	x(t+1) = f(x(t))
\]
with $f\colon D\to\R^n$ a continuously differentiable map from $D\subset\R^n,\ 0\in D$ into $\R^n$ such that $f(0)=0$. Using the mean value theorem, each component of $f$ can be rewritten in the following form
\[
	f_i(x) = \frac{\partial f_i}{\partial x}(z_i) x 
\]
for some $z_i$ on the segment from the origin to $x$. It is valid for any $x\in D$, where the line connecting $x$ to the origin entirely belongs to $D$. We can also write
\[
	f_i(x) = \frac{\partial f_i}{\partial x}(0) x+\underbrace{\left[ \frac{\partial f_i}{\partial x}(z_i)  -\frac{\partial f_i}{\partial x }(0) \right]x}_{g_i(x)}
\] 
where each $g_i(x)$ satisfies
\[
	|g_i(x)|\leq \left\Vert\frac{\partial f_i}{\partial x}(z_i) -\frac{\partial f_i}{\partial x}(0)\right\Vert\|x\| 
\]
Function $f$ can be rewritten as 
\[
	f(x)= Ax+g(x)
\]
where $A=\frac{\partial f_i}{\partial x}(0)$. By continuity of $\frac{\partial f_i}{\partial x}$ we have that
\[
	\frac{\|g(x)\|}{\|x\|}\to 0 \text{ as } \|x\|\to 0
\] 
Therefore, in a small neighbourhood of the origin the nonlinear system can be approximated by $x(t+1) = Ax(t)$.

The following is known as Lyapunov's indirect method.
\begin{theorem}[Linearised asympt stable implies nonlin asympt stable] Let $x=0$ be an equilibrium point for the nonlinear autonomous system
\[
	x(t+1) = f(x(t))
\]
where $f\colon D\to\R^n$ is locally Lipschitz in $D\subset \R^n$ and $0\in D$. Let $A=\left.\frac{\partial f_i}{\partial x}(x)\right|_{x=0}$. Then the origin is asymptotically stable if $|\lambda_i|<0$ for all the eigenvalues of $A$. Instead, if there exists at least an eigenvalue such that $|\lambda_i|>0$, then the origin is unstable.
\end{theorem}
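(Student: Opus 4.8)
The plan is to treat the two assertions separately, in each case transporting the linear stability information to the nonlinear system $x(t+1)=f(x(t))$ by means of a quadratic form, using the decomposition $f(x)=Ax+g(x)$ with $\|g(x)\|/\|x\|\to0$ as $\|x\|\to0$ that was established just before the statement. Write $\|g(x)\|\le\gamma(\|x\|)\|x\|$ with $\gamma(s)\to0$ as $s\to0^+$; this slack is what lets a linear construction survive the nonlinearity near the origin.

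\emph{Asymptotic stability.} Assume every eigenvalue of $A$ has modulus $<1$, i.e.\ $A$ is Schur. By the Lyapunov equation theorem for linear time-invariant systems (applied with $Q=I$) there is a symmetric positive definite $P$ with $A\T PA-P=-I$; take $V(x)=x\T Px$, which is continuous and positive definite. I would then compute
\[
V(f(x))-V(x)=(Ax+g(x))\T P(Ax+g(x))-x\T Px=-\|x\|^2+2\,x\T A\T P g(x)+g(x)\T P g(x),
\]
and bound the last two terms by $\big(2\|A\|\,\|P\|\,\gamma(\|x\|)+\|P\|\,\gamma(\|x\|)^2\big)\|x\|^2$. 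Since $\gamma(s)\to0$, there is $r_0\in(0,r)$ such that on $D_0=\{\|x\|<r_0\}$ this is at most $\tfrac12\|x\|^2$, whence $V(f(x))-V(x)\le-\tfrac12\|x\|^2<0$ for all $x\in D_0-\{0\}$. Theorem \ref{th:LyapAutSyst} then yields that the origin is asymptotically stable.

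\emph{Instability.} Suppose $A$ has an eigenvalue of modulus $>1$. The aim is a quadratic form $V(x)=x\T Px$ with $P$ symmetric but \emph{indefinite} that strictly increases along the linearized dynamics, so that Theorem \ref{theo:instability} applies. Assuming first that $A$ has no eigenvalue on the unit circle, a real similarity $x=Ty$ brings $A$ to block-diagonal form $\operatorname{diag}(A_u,A_s)$, with $A_u$ having all eigenvalues outside and $A_s$ all eigenvalues inside the unit disk. Then $A_u$ is invertible with $A_u^{-1}$ Schur, and solving discrete Lyapunov equations for $A_u^{-1}$ and for $A_s$ produces symmetric positive definite $P_u,P_s$ with $A_u\T P_u A_u-P_u\succ0$ and $P_s-A_s\T P_s A_s\succ0$. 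For $V(x)=y_u\T P_u y_u-y_s\T P_s y_s$, where $(y_u,y_s)=y=T^{-1}x$, this gives $V(Ax)-V(x)\ge c\|x\|^2$ for some $c>0$, while $V$ is strictly positive on $\{y_s=0\}\setminus\{0\}$, which meets every neighbourhood of the origin. Re-running the $o(\|x\|^2)$ estimate from the first part after adding $g$, on a sufficiently small ball $B_r$ one gets $V(f(x))-V(x)>0$ on $U=\{x\in B_r:V(x)>0\}$; combined with $V(0)=0$ and the existence of points with $V>0$ arbitrarily close to the origin, Theorem \ref{theo:instability} gives instability.

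The main obstacle is the instability direction, and within it the construction and sign bookkeeping of the indefinite (Chetaev) quadratic form. In particular, the hypothesis as stated permits $A$ to carry eigenvalues on the unit circle alongside the one of modulus $>1$, a case the two-block split above does not settle, since a block with spectrum on the unit circle need not make $y_0\T A_0\T A_0y_0-y_0\T y_0$ sign definite. Closing that gap calls for a separate argument — e.g.\ perturbing $A$ to push the unit-circle eigenvalues strictly inside the disk while retaining the unstable mode, building $V$ for the perturbed matrix, and then verifying, using the $o(\|x\|^2)$ slack, that the strict inequalities persist for the original $f$. The asymptotic-stability direction is, by comparison, routine once the linear Lyapunov equation theorem is in hand.
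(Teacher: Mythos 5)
Your first half is correct and coincides with the paper's argument: solve $A\T PA-P=-Q$ for a positive definite $P$, apply $V(x)=x\T Px$ to the decomposition $f(x)=Ax+g(x)$, and absorb the cross terms using $\|g(x)\|\le\gamma\|x\|$ on a small enough ball so that Theorem~\ref{th:LyapAutSyst} applies. (If anything, your bookkeeping of the cross term $2x\T A\T Pg(x)$ is more careful than the paper's, which writes it as $2g(x)\T Px$ and silently drops the factor $A$.)

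The instability half is where the substance lies, and your own diagnosis is accurate: the stable/unstable spectral splitting with $V=y_u\T P_u y_u-y_s\T P_s y_s$ settles only the case in which $A$ has no eigenvalue on the unit circle, and the hypothesis does not exclude that case. The repair you sketch (perturb $A$ to push the unit-circle eigenvalues just inside the disk and argue that the strict inequalities persist) is also more delicate than it looks: as the perturbation size $\delta\to0$ the contraction margin of the perturbed stable block degenerates and the norm of the corresponding Lyapunov solution blows up, so the $O(\delta)\|x\|^2$ error incurred in returning to the original $A$ is not automatically dominated by the guaranteed quadratic increase. The paper avoids the splitting altogether: it invokes the lemma that $A\T PA-P=-Q$ is solvable iff $\lambda_i\lambda_j\neq1$ for all pairs of eigenvalues, and that for positive definite $Q$ the solution $P$ fails to be positive semidefinite when some $|\lambda_i|>1$; taking $V(x)=-x\T Px$ then gives a function positive at points arbitrarily close to the origin with $V(Ax)-V(x)=x\T Qx>0$, so Theorem~\ref{theo:instability} applies. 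When the resonance $\lambda_i\lambda_j=1$ does occur (which is exactly what happens with unit-circle eigenvalues), the paper rescales to $A_1=A/\gamma$ with $\gamma$ chosen so that $A_1$ is resonance-free and still has an eigenvalue outside the disk, and controls the extra term $-(\gamma^2-1)x\T P_1x$ by a suitable choice of $\gamma$. So the gap you flag is genuine, and the missing ingredient is precisely this solvability-plus-rescaling device (or an equivalent one). Note finally that in either version one must still reinstate $g$ and verify $V(f(x))-V(x)>0$ on $U=\{x\in B_r\mid V(x)>0\}$; you do address this, whereas the paper's own write-up works only with the linearization at that step.
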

\begin{proof}
Since $A$ is stable there exists a positive definite matrix $P,\ p_1 I\leq P\leq p_2 I$, such that $V(x)$ is a Lyapunov function for the linearised system $x(t+1) = Ax(t)$, and so it solves \eqref{eq:lyapEq} for any positive definite matrix $Q,\ q_1 I\leq Q\leq q_2 I$. Applying the same Lyapunov function to the nonlinear system we get the following total difference
\begin{align*}
	V(f(x))-V(x)&=f(x)\T Pf(x)-x\T P x=(Ax+g(x))\T P(Ax+g(x))-x\T Px \\
	&= x\T A\T P Ax-x\T Px+2g(x)\T Px+g(x)\T P g(x)\\ 
	&=-x\T Q x+2g(x)\T Px+g(x)\T P g(x)\\
\end{align*}
Since $\frac{\|g(x)\|}{\|x\|}\to 0$ as $\|x\|\to 0$, fixed a constant $\gamma>0$, there exists a neighbourhood of $x,\ \|x\|<\epsilon$ such that $\|g(x)\|<\gamma \|x\|$, and so
\[
	V(f(x))-V(x)\leq -q_1\|x\|^2+p_2\gamma^2 \|x\|^2+2p_2\gamma\|x\|^2=(-q_1+p_2\gamma^2+2p_2\gamma)\|x\|^2,\ \forall \|x\|<\epsilon
\]
Therefore, choosing $\gamma$ such that $-q_1+p_2\gamma^2+2p_2\gamma$ is negative, $V(x)$ is indeed a Lyapunov function for the starting nonlinear system. Note that $-q_1+p_2\gamma^2+2p_2\gamma=0$ describes in $\gamma$ a parabola whose vertex is in the third quarter and is directed towards the upper part of the plane, so there exists a $\gamma$ which satisfy the property required.\\

To show the instability part, we first give the following statement regarding the solvability of the discrete Lyapunov equation
\begin{lemma}B
The Lyapunov equation \eqref{eq:lyapEq} admits a solution if and only if the eigenvalues $\lambda_i$ of matrix $A$ are such that 
\begin{equation}\label{eq:conditionSolvabilityLyap}
	\lambda_i\lambda_j\neq 1 \text{ for all } i,j=1,\dots,n
\end{equation}
Moreover given a positive definite matrix $Q$, the corresponding solution $P$ is positive definite if and only if $|\lambda_i|<1$ for all $i=1,\dots,n$. 
\end{lemma}
The proof of the previous lemma can be found in \cite{Hahn}. Suppose now that there is $\lambda_i$ such that $|\lambda_i|>1$ but that condition \eqref{eq:conditionSolvabilityLyap} is satisfied. Therefore, given a positive definite matrix $Q$, the corresponding solution $P$ of $A\T P A-P = -Q$ is not positive semi-definite (note that if $Q$ is positive definite and $A$ is Schur stable, then $P$ cannot be positive semi-definite, since if $x$ is such that $x\T P x=0$, then $x\T A\T PAx<0$ but this is not possible since $P$ is positive semi-definite). Matrix $\tilde{P}=-P$ is not negative semi-definite, so defining $V(x) = x\T\tilde{P}x$, it holds $V(x)>0$ for some $x\in\R^n$; for the same vector $x$ it also holds 
\begin{align*}
	V(f(x))-V(x)&=x\T A\T \tilde{P}Ax-x\T \tilde{P} x = x\T (A\T \tilde{P}A-\tilde{P})x = \\
		&-x\T (A\T P A-P)x = -x\T (-Q)x= x\T Qx>0
\end{align*}
Now we can apply Theorem \ref{theo:instability} and conclude that the equilibrium point is instable. If condition 	\eqref{eq:conditionSolvabilityLyap} is not satisfied, consider matrix $A_1=\frac{1}{\gamma}A$, with $\gamma$ such that \eqref{eq:conditionSolvabilityLyap} is satisfied using matrix $A_1$ and the solution matrix $P_1$ for \eqref{eq:lyapEq} for any positive definite matrix $Q$ is not positive semi-definite (that is there is at least one eigenvalue of $A_1$ with modulus greater than $1$). Therefore, choosing $V(x) = -x\T P_1 x$, it holds that $V(x)>0$ for some $x$. It also holds that 
\begin{align*}
V(f(x))-V(x) &= -x\T A\T P_1 Ax+x\T P_1x = -x\T (\gamma^2 A_1\T P_1 A_1- P_1-\gamma^2 P_1+\gamma^2 P_1) x & \\
&= -\gamma^2 x\T(A_1\T P_1 A_1-P_1 )x - (\gamma^2-1) x\T P_1 x \\
&= -\gamma^2 x\T (-Q)x- (\gamma^2-1) x\T P_1 x = \gamma^2 x\T Qx- (\gamma^2-1) x\T P_1 x
\end{align*}
Choosing an adequately small $\gamma$ it holds $V(f(X))-V(x)>0$ and we can apply again Theorem \ref{theo:instability}.
\end{proof}
 This theorem allows to find a Lyapunov function for the nonlinear system in a neighbourhood of the origin, provided that the linearised system is asymptotically stable.
 \section{Comparison Functions}
Consider the nonautonomous system
\[
	x(t+1) = f(t,x(t))
\]
starting from $x(t_0)=x_0$ at time $t_0$, with $f\colon (\Tset\times D)\to\R^n,\ \Tset=\{t_0,t_0+1,\dots\},\ D\subset\R^n$. The evolution of the system depends on the starting time $t_0$.  We need new definitions for the stability in order to have them hold uniformly in the initial time $t_0$. We will exploit the following class of functions
\begin{definition}A continuous function $\alpha\colon [0,a)\to[0,\infty)$ is said to belong to class $\K$ if it is strictly increasing and $\alpha(0)=0$. It is said to belong to class $\K_{\infty}$ if $a=\infty$ and $\alpha(r)\to\infty$ as $r\to\infty$.
\end{definition}
\begin{definition} A continuous function $\beta\colon [0,a)\times [0,\infty)\to[0,\infty)$ is said to belong to class $\KL$ if, for each fixed $s$, the mapping $\beta(r,s)$ belongs to class $\K$ with respect to $r$ and, for each fixed $r$, the mapping $\beta(r,s)$ is decreasing with respect to $s$ and $\beta(r,s)\to0$ as $s\to 0$.
\end{definition}
\begin{lemma}
Let $\alpha_1$ and $\alpha_2$ be class $\K$ functions on $[0,a)$, $\alpha_3$ and $\alpha_4$ be class $\K_\infty$ functions, and $\beta$ be a class $\KL$ function. Denote the inverse of $\alpha_i$ by $\alpha_i^{-1}$. Then
\begin{itemize}
\item $\alpha_1^{-1}$ is defined on $[0,\alpha_1(a))$ and belongs to class $\K$.
\item $\alpha_3^{-1}$ is defined on $[0,\infty)$ and belongs to class $\K_\infty$.
\item $\alpha_1\circ\alpha_2$ belongs to class $\K$
\item $\alpha_3\circ\alpha_4$ belongs to class $\K_\infty$
\item $\sigma(r,s)=\alpha_1(\beta(\alpha_2(r),s))$ belongs to class $\KL$.
\end{itemize}
\end{lemma}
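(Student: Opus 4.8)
The plan is to handle the five items in order, leaning throughout on two elementary facts: a continuous strictly increasing function on an interval is a homeomorphism onto its image, with continuous strictly increasing inverse; and a composition of continuous maps is continuous, a composition of strictly increasing maps is strictly increasing, and a composition of maps each vanishing at $0$ vanishes at $0$. It is convenient to write, for a class $\K$ function $\alpha$ defined on $[0,a)$, $\alpha(a):=\lim_{r\to a^-}\alpha(r)=\sup_{r\in[0,a)}\alpha(r)\in(0,\infty]$, so that the range of $\alpha$ is exactly $[0,\alpha(a))$.

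For the first item, $\alpha_1$ is a continuous strictly increasing bijection of $[0,a)$ onto $[0,\alpha_1(a))$ with $\alpha_1(0)=0$; hence $\alpha_1^{-1}$ is defined on $[0,\alpha_1(a))$, is continuous and strictly increasing, and $\alpha_1^{-1}(0)=0$, i.e. $\alpha_1^{-1}\in\K$. The second item is the special case $a=\infty$: by definition of $\K_\infty$ we have $\alpha_3(a)=\infty$, so $\alpha_3$ maps $[0,\infty)$ bijectively onto itself, $\alpha_3^{-1}$ is defined on $[0,\infty)$, continuous, strictly increasing, with $\alpha_3^{-1}(0)=0$; and since the range of $\alpha_3^{-1}$ is all of $[0,\infty)$ and $\alpha_3^{-1}$ is increasing, $\alpha_3^{-1}(r)\to\infty$ as $r\to\infty$, so $\alpha_3^{-1}\in\K_\infty$.

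For the third item, $r_1<r_2$ gives $\alpha_2(r_1)<\alpha_2(r_2)$ and then $\alpha_1(\alpha_2(r_1))<\alpha_1(\alpha_2(r_2))$, so $\alpha_1\circ\alpha_2$ is strictly increasing; it is continuous and $(\alpha_1\circ\alpha_2)(0)=0$, hence class $\K$ on the subinterval of $[0,a)$ on which $\alpha_2(r)$ remains in the domain of $\alpha_1$ (no restriction is needed when both functions are $\K_\infty$, all domains then being $[0,\infty)$). The fourth item follows identically, with the extra observation that $\alpha_4(r)\to\infty$ forces $\alpha_3(\alpha_4(r))\to\infty$. For the fifth item: for fixed $s$, the map $r\mapsto\beta(\alpha_2(r),s)$ is the composition of $\alpha_2\in\K$ with $\beta(\cdot,s)\in\K$, hence in $\K$ by item three, and a further composition with $\alpha_1\in\K$ is again in $\K$, so $\sigma(\cdot,s)\in\K$; for fixed $r$, $\beta(\alpha_2(r),s)$ is decreasing in $s$ and tends to $0$ as $s$ grows (definition of $\KL$), so, $\alpha_1$ being increasing and continuous with $\alpha_1(0)=0$, $\sigma(r,s)$ is decreasing in $s$ and $\sigma(r,s)\to 0$; and $\sigma$ is jointly continuous as a composition of the jointly continuous $\beta$ with the continuous $\alpha_1,\alpha_2$. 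Hence $\sigma\in\KL$.

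The only step that is not pure bookkeeping is the domain matching in the composition and inversion claims — reading $\alpha_1(a)$ as the one-sided limit at the right endpoint, and making sure the range of the inner map lands in the domain of the outer one. This is a genuine (harmless) restriction for $\K$ functions defined only on a bounded interval, and is vacuous in the $\K_\infty$ setting; once it is dealt with, the rest is immediate from monotonicity, continuity, and the value at the origin.
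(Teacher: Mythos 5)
Your proof is correct, and it is the standard argument (essentially Lemma~4.2 in Khalil); the paper itself states this lemma without any proof, so there is nothing to compare against. Two small remarks: you were right to flag the domain-matching issue in the composition items, since $\alpha_1\circ\alpha_2$ is only guaranteed to be defined where $\alpha_2(r)$ lies in $[0,a)$; and note that you implicitly corrected the paper's definition of class $\KL$, which says ``$\beta(r,s)\to 0$ as $s\to 0$'' where it should read $s\to\infty$ --- your item five uses the intended definition, which is the only one under which the claim holds.
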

These classes of functions are connected to the Lyapunov theory for autonomous systems through these Lemmas

\begin{lemma}\label{lemma:limitPosDef} Let $V\colon D\to\R$ be a continuous positive definite function defined on a domain $D\subset\R^n$ that contains the origin. Let $B_r\subset D$ for some $r>0$. Then there exist class $\K$ functions $\alpha_1$ and $\alpha_2$, defined on $[0,r)$, such that
\[
	\alpha_1(\|x\|)\leq V(x)\leq\alpha_2(\|x\|)
\]
for all $x\in B_r$. If $D=\R^n$, the functions $\alpha_1$ and $\alpha_2$ will be defined on $[0,\infty)$ and the previous inequality will hold for all $x\in\R^n$. Moreover, if $V(x)$ is radially unbounded, then $\alpha_1$ and $\alpha_2$ can be chosen to belong to class $\K_\infty$.
\end{lemma}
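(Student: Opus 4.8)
The plan is to reduce the statement to a one-dimensional problem by replacing $V$ with its radial \emph{envelopes}, and then to upgrade those (merely monotone) envelopes into genuine class $\K$ functions. This mirrors the argument for Lemma~4.3 in \cite{Khalil}.

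First I would define the upper envelope $\overline\phi(s)=\max_{\|x\|\le s}V(x)$, which is finite for every $s\in[0,r)$ because $\{x:\|x\|\le s\}$ is a compact subset of $B_r\subset D$ and $V$ is continuous. It is immediate that $\overline\phi$ is non-decreasing, that $\overline\phi(0)=V(0)=0$, and that $V(x)\le\overline\phi(\|x\|)$ on $B_r$; continuity of $\overline\phi$ follows from continuity of $V$ together with the fact that the maximum of a continuous function over a continuously varying family of compact sets depends continuously on the family (Berge's maximum theorem). Assuming $\overline{B_r}\subset D$, I would symmetrically define the lower envelope $\underline\phi(s)=\min_{s\le\|x\|\le r}V(x)$; the annulus $\{x:s\le\|x\|\le r\}$ is compact and, for $s>0$, avoids the origin, so $\underline\phi(s)>0$ for $s>0$ by positive definiteness, while $\underline\phi(0)=0$, $\underline\phi$ is non-decreasing and continuous, and $V(x)\ge\underline\phi(\|x\|)$ on $\overline{B_r}$.

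The substantive step is that $\overline\phi$ and $\underline\phi$ need not be \emph{strictly} increasing, hence are not yet class $\K$, so one must perturb them into class $\K$ functions without breaking the sandwich. For the upper bound I would take $\alpha_2(s):=\overline\phi(s)+s$: adding the strictly increasing term $s$ removes any flat pieces, $\alpha_2$ is continuous, strictly increasing, $\alpha_2(0)=0$, and $\alpha_2(\|x\|)\ge\overline\phi(\|x\|)\ge V(x)$. For the lower bound I would take $\alpha_1(s):=\underline\phi(s)\,\tfrac{s}{1+s}$ with $\alpha_1(0):=0$: since $\tfrac{s}{1+s}$ is strictly increasing and lies in $(0,1)$ for $s>0$, the product is continuous, strictly increasing, vanishes only at $0$, and $\alpha_1(\|x\|)\le\underline\phi(\|x\|)\le V(x)$. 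Thus $\alpha_1,\alpha_2\in\K$ on $[0,r)$ and $\alpha_1(\|x\|)\le V(x)\le\alpha_2(\|x\|)$ on $B_r$, which is the main conclusion.

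The global statements follow from the same formulas. If $D=\R^n$ then $\overline\phi$ is defined on all of $[0,\infty)$, so $\alpha_2\in\K$ there. If in addition $V$ is radially unbounded, then for each $M$ there is $R$ with $V(x)>M$ whenever $\|x\|>R$, whence $\overline\phi(s)\ge M$ and $\underline\phi(s):=\inf_{\|x\|\ge s}V(x)\ge M$ for $s>R$; thus $\overline\phi(s)\to\infty$ and $\underline\phi(s)\to\infty$, and since $\alpha_2(s)\ge\overline\phi(s)$ and $\alpha_1(s)=\underline\phi(s)\tfrac{s}{1+s}$ both diverge, $\alpha_1,\alpha_2\in\K_\infty$. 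The one point that genuinely needs care is the behaviour near $\partial B_r$ (and at infinity in the non-radially-unbounded case): the lower envelope is bounded away from $0$ on $(0,r)$ only because the \emph{closed} annulus sits inside $D$, and indeed if $V$ decays to $0$ toward the boundary sphere no class $\K$ lower bound can hold on the whole open ball, so the hypothesis $\overline{B_r}\subset D$ (respectively radial unboundedness in the global case) is exactly what makes the minorant $\alpha_1$ work.
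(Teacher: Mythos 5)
Your construction is correct and is essentially the standard envelope argument behind the result that the paper, for its part, only cites (its ``proof'' is just a pointer to the reference): take the radial max/min envelopes of $V$, observe that they are continuous, monotone and vanish at $0$, and perturb them into strictly increasing functions. The additive $+s$ for the majorant and the multiplicative factor $s/(1+s)$ for the minorant both do the job, and your check that $\alpha_1$ is strictly increasing (which uses $\underline{\phi}(s_1)>0$ for $s_1>0$) is exactly where positive definiteness enters. You are also right to flag the boundary issue: with the paper's convention $B_r=\{x:\|x\|<r\}$, the hypothesis $B_r\subset D$ alone is not enough for the lower bound --- e.g.\ $V(x)=\|x\|^2(r-\|x\|)$ on $D=B_r$ is continuous and positive definite yet admits no class $\K$ minorant on $[0,r)$, since any such $\alpha_1$ is bounded below by $\alpha_1(r/2)>0$ on $[r/2,r)$ while $V\to0$ near the sphere. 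So your added assumption $\overline{B_r}\subset D$ is a necessary repair of the statement, not a defect of your argument.

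The one genuine gap is the global case without radial unboundedness. For $D=\R^n$ your proof produces $\alpha_2$ but says nothing about $\alpha_1$, and in fact nothing can be said: the lower envelope $\inf_{\|x\|\ge s}V(x)$ may vanish identically (take $V(x)=\|x\|^2/(1+\|x\|^4)$, which is positive definite but tends to $0$ at infinity), and then no class $\K$ lower bound on $[0,\infty)$ exists, by the same monotonicity obstruction as above. Hence the second sentence of the lemma is false as far as $\alpha_1$ is concerned unless one additionally assumes $\liminf_{\|x\|\to\infty}V(x)>0$, which radial unboundedness supplies and which is the only situation the rest of the paper actually uses. Your write-up silently skips this sub-case; it would be better to state explicitly that the global minorant requires the extra hypothesis, rather than leaving the reader to infer it from the radially unbounded discussion.
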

\begin{proof}
Corollary C.4 in \cite{Khalil}.
\end{proof} 
If $V$ is a quadratic positive definite function $V(x) = x\T Px,\ P>0$, then the previous lemma follows from the fact that
\[
	\lambda_{\min}(P)I\leq V(x)\leq \lambda_{\max}(P)I
\]
\section{Nonautonomous Systems}
Consider the nonautonomous system 
\begin{equation}\label{eq:nonautonomousSys}
	x(t+1) = f(t,x(t))
\end{equation}
starting from $x(t_0)=x_0$ at time $t_0$, with $f\colon (\Tset\times D)\to\R^n,\ \Tset=\{t_0,t_0+1,\dots\},\ 0\in D\subset\R^n$ locally Lipschitz in $x$ on $T\times D$.
 The origin is an equilibrium point for \eqref{eq:nonautonomousSys} if 
\[
	f(t,0)=0,\ \forall t\in\Tset
\]
\begin{definition}The equilibrium point $x=0$ of \eqref{eq:nonautonomousSys} is
\begin{itemize}
\item \textbf{stable} if, for each $\epsilon>0$, there is $\delta=\delta(\epsilon,t_0)>0$ such that
\begin{equation}\label{eq:stabilityNonauto}
	\|x(t_0)\|<\delta \Rightarrow\|x(t)\|<\epsilon,\ \forall t\geq t_0\geq 0
\end{equation}
\item \textbf{uniformly stable} if, for each $\epsilon>0$, there is $\delta=\delta(\epsilon)>0$, independent of $t_0$, such that  \eqref{eq:stabilityNonauto} is satisfied
\item \textbf{unstable} if it is not stable
\item \textbf{asymptotically stable} if it is stable and there is a positive constant $c=c(t_0)$ such that $x(t)\to0$ as $t\to\infty$, for all $\|x(t_0)\|<c$.
\item \textbf{uniformly asymptotically stable} if it is uniformly stable and there is a positive constant $c$, independent of $t_0$, such that $x(t)\to0$ as $t\to\infty$, for all $\|x(t_0)\|<c$ uniformly in $t_0$; that is, for each $\eta>0$, there is $T=T(\eta)>0$ such that
\[
	\|x(t)\|<\eta,\ \forall\ t>t_0+T(\eta),\ \forall \|x(t_0)\|<c
\]
\item \textbf{globally uniformly asymptotically stable} if it is uniformly stable, $\delta(\epsilon)$ can be chosen to satisfy $\lim_{\epsilon\to\infty}\delta(\epsilon)=\infty$, and, for each pair of positive numbers $\eta$ and $c$, there is $T=T(\eta,c)>0$ such that
\[
	\|x(t)\|<\eta,\ \forall\ t>t_0+T(\eta,c),\ \forall \|x(t_0)\|<c
\]
\end{itemize}
\end{definition}
\begin{lemma}[Stability definition through class $\K$ functions]
The equilibrium point $x=0$ of $x(t+1)=f(t,x)$ is
\begin{itemize}\label{lemma:alternativeDefinitionNonuto}
\item uniformly stable if and only if there exists a class $\K$ function $\alpha$ and a positive constant $c$, independent of $t_0$, such that
\[
	\|x(t)\|\leq\alpha(\|x(t_0)\|),\ \forall\ t\geq t_0\geq 0,\ \forall\|x(t_0)\|<c
\]
\item uniformly asymptotically  stable if and only if there exist a class $\KL$ function $\beta$ and a positive constant $c$, independent of $t_0$, such that
\begin{equation}\label{eq:uniformlyAsymptStableNonaut}
	\|x(t)\|\leq \beta(\|x(t_0)\|,t-t_0),\ \forall\ t\geq t_0\geq 0,\ \forall\|x(t_0)\|<c
\end{equation}
\item globally uniformly stable if and only if inequality \eqref{eq:uniformlyAsymptStableNonaut} is satisfied for any initial state $x(t_0)$.
\end{itemize}
\end{lemma}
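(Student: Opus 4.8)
The plan is to read each of the three equivalences as a pair of implications: a routine \emph{sufficiency} direction, in which the comparison estimate is shown to imply the stated stability notion, and a substantive \emph{necessity} direction, in which the stability notion is used to construct the comparison function. Throughout one uses that in discrete time the trajectory $x(\cdot)$ is defined unambiguously forward by iteration, so no existence issues intervene.

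Sufficiency is short in all three cases. If $\|x(t)\|\le\alpha(\|x(t_0)\|)$ on a ball $\|x(t_0)\|<c$ with $\alpha\in\K$, then for a given $\epsilon>0$ any $\delta\in(0,c]$ with $\alpha(\delta)<\epsilon$ (which exists since $\alpha$ is continuous and $\alpha(0)=0$) is the $t_0$-independent constant required for uniform stability. If $\|x(t)\|\le\beta(\|x(t_0)\|,t-t_0)$ with $\beta\in\KL$, then $\beta(\cdot,0)\in\K$ gives uniform stability by the previous line, while for $\eta>0$ and a fixed $c'<c$ we pick $T(\eta)$ with $\beta(c',T(\eta))<\eta$; monotonicity of $\beta$ in each argument then gives $\|x(t)\|\le\beta(c',T(\eta))<\eta$ whenever $\|x(t_0)\|\le c'$ and $t\ge t_0+T(\eta)$, which is uniform asymptotic stability. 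The global statement is the same with $c,c'$ arbitrary, using that $\beta(\cdot,0)$ is then defined on all of $[0,\infty)$ so that $\delta(\epsilon)$ may be taken unbounded.

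For necessity of (i) I would build the comparison function from the reachable norms: set $\alpha_0(r):=\sup\{\|x(t)\|:t_0\ge0,\ t\ge t_0,\ \|x(t_0)\|\le r\}$. Applying uniform stability once with a large $\epsilon$ produces a $c>0$ on which $\alpha_0$ is finite, and uniform stability also makes $\alpha_0$ nondecreasing with $\alpha_0(r)\to0$ as $r\to0^+$; a standard majorisation (local averaging) replaces it by a class $\K$ function $\alpha\ge\alpha_0$, and then $\|x(t)\|\le\alpha_0(\|x(t_0)\|)\le\alpha(\|x(t_0)\|)$. For necessity of (ii) I would combine this with the decay supplied by uniform attractivity: letting $T(\eta)$ be the least time with $\|x(t_0+s)\|<\eta$ for all $s\ge T(\eta)$ and all $\|x(t_0)\|<c$, and passing to a continuous, strictly decreasing, strictly positive majorant of the resulting profile, one obtains $\sigma$ with $\sigma(s)\to0$ and $\|x(t_0+s)\|\le\sigma(s)$ on the same ball. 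The two estimates are then fused by a geometric mean,
\[
\beta(r,s):=\sqrt{\alpha(r)\,\sigma(s)}.
\]
This $\beta$ is continuous; for fixed $s$ it equals $\sqrt{\sigma(s)}\,\sqrt{\alpha(\cdot)}$, strictly increasing and zero at the origin, hence class $\K$ in $r$; for fixed $r$ it equals $\sqrt{\alpha(r)}\,\sqrt{\sigma(\cdot)}$, decreasing to $0$; so $\beta\in\KL$. Finally $\|x(t_0+s)\|^2\le\alpha(\|x(t_0)\|)\,\sigma(s)$, since each factor already bounds $\|x(t_0+s)\|$, which is exactly \eqref{eq:uniformlyAsymptStableNonaut}.

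For (iii) the construction is unchanged in form, with two adjustments. Global uniform stability makes $\alpha_0$ finite on all of $[0,\infty)$ and unbounded, so the majorant $\alpha$ is taken in class $\K_\infty$; and the attractivity time now depends on the radius of the admissible ball, so the decay profile is built as a function $\sigma(r,s)$ that is nondecreasing in $r$, decreasing in $s$, and tends to $0$ in $s$, and one sets $\beta(r,s):=\sqrt{\alpha(r)\,\min\{\alpha(r),\sigma(r,s)\}}$. Evaluating the stability estimate at $t=t_0$ shows $\alpha(r)\ge r$, which forces $\beta(\cdot,0)$ to be class $\K_\infty$, and the estimate now holds for every initial state. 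I expect the only genuine difficulties to lie in these necessity directions, and within them in the two regularisation steps — converting the merely monotone "reachable-norm"/"best-$\delta$" function and the merely monotone attractivity profile into bona fide class $\K$ and strictly-decreasing-to-zero functions — together with the packaging of an estimate that is class $\K$ in the initial state and decaying in time into a single $\KL$ function; the geometric-mean trick disposes of the packaging cleanly, and the monotone-approximation facts are elementary.
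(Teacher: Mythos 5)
Your argument is correct, and it is essentially the same as the standard converse-comparison-function proof that the paper itself merely cites (Appendix C.6 of Khalil): sufficiency by reading off $\delta(\epsilon)$ and $T(\eta)$ from $\alpha$ and $\beta$, and necessity by taking the supremum of reachable norms to get $\alpha\in\K$, extracting a time-decay profile $\sigma$ from uniform attractivity, and fusing the two bounds into a $\KL$ function via the geometric mean $\sqrt{\alpha(r)\sigma(s)}$. The two regularisation steps you flag (majorising a nondecreasing function vanishing at $0^+$ by a class $\K$ function, and a bounded profile tending to $0$ by a continuous strictly decreasing one) are indeed the standard elementary lemmas, so no genuine gap remains.
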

\begin{proof}
Proof in Appendix C.6 in \cite{Khalil}.
\end{proof}
An important case for an uniformly asymptotically stable point is when $\beta(r,s)=kre^{-\lambda s}$, with $\lambda>0$. In this case we have the following 
\begin{definition} The equilibrium point $x=0$ of \eqref{eq:nonautonomousSys} is called \textbf{exponentially stable} if there exist positive constants $c,k$ and $\lambda$ such that it holds
\begin{equation}\label{eq:exponentiallyStable}
	\|x(t)\|\leq k\|x(t_0)\| e^{-\lambda (t-t_0)},\ \forall\|x(t_0)\|<c
\end{equation}
and is said to be \textbf{globally exponentially stable}  if the previous inequality holds for any initial state $x(t_0)$.
\end{definition}
Note that since $\lambda$ is positive, $e^{-\lambda (t-t_0)}$ is equivalent to $\gamma^{t-t_0}$ with $\gamma=e^{-\lambda}<1$.
\begin{theorem}[Lyapunov function implies stability for nonautonomous]\label{theo:lyapNonautStable}
Let $x=0$ be an equilibrium point for the nonautonomous system 
\[
	x(t+1) = f(t,x(t))
\]
with $f\colon (\Tset\times D)\to\R^n,\ 0\in D\subset\R^n$ locally Lipschitz in $x$ on $\Tset\times D$. Let $V\colon \Tset\times D\to\R$ be a continuous function such that 
\begin{align*}
	W_1(x)\leq V(t,x)\leq W_2(x)\\
	V(t+1,f(t,x))-V(t,x)\leq 0 
\end{align*}
for all $t\geq 0$ and for all $x\in D$, where $W_1(x)$ and $W_2(x)$ are continuous positive definite functions on $D$. Then $x=0$ is uniformly stable.
\end{theorem}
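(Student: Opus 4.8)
The plan is to follow the template of the autonomous Theorem~\ref{th:LyapAutSyst} and of the continuous-time statement in \cite[Thm.~4.8]{Khalil}; the only genuinely new point is that the constant $\delta$ must come out independent of the initial time $t_0$, and what makes this possible is that the outer bound $V(t,x)\le W_2(x)$ has a right-hand side that does not depend on $t$.

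First I would fix $\epsilon>0$ and choose $r\in(0,\epsilon]$ with $B_r\subset D$. Applying Lemma~\ref{lemma:limitPosDef} to the continuous positive definite function $W_1$ produces a class~$\K$ function $\alpha_1$ on $[0,r)$ with $\alpha_1(\|x\|)\le W_1(x)$ on $B_r$, and applying it to $W_2$ produces a class~$\K$ function $\alpha_2$ with $W_2(x)\le\alpha_2(\|x\|)$ on $B_r$. Chaining with the two hypotheses gives, for every $t\ge 0$ and every $x\in B_r$, $\alpha_1(\|x\|)\le W_1(x)\le V(t,x)\le W_2(x)\le\alpha_2(\|x\|)$. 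Then I would pick $\delta\in(0,r)$ with $\alpha_2(\delta)<\alpha_1(r^-)$, where $\alpha_1(r^-):=\lim_{s\to r^-}\alpha_1(s)$; this is possible because $\alpha_2(\delta)\to0$ as $\delta\to0^+$, and the quantity $\alpha_1(r^-)$ being beaten depends on $\epsilon$ only.

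Next comes the trapping step: if $\|x(t_0)\|<\delta$ then $\|x(t)\|<r\le\epsilon$ for all $t\ge t_0$. So long as the trajectory stays in $B_r\subset D$, the decrease hypothesis chains to $V(t,x(t))\le V(t_0,x(t_0))\le\alpha_2(\|x(t_0)\|)<\alpha_2(\delta)<\alpha_1(r^-)$, and feeding this back into $\alpha_1(\|x(t)\|)\le V(t,x(t))$ together with strict monotonicity of $\alpha_1$ yields $\|x(t)\|<r$. Since $\delta=\delta(\epsilon)$, this is precisely uniform stability. Equivalently, and more compactly, one obtains $\|x(t)\|\le\alpha_1^{-1}(\alpha_2(\|x(t_0)\|))$ for $\|x(t_0)\|$ small, and since $\alpha_1^{-1}\circ\alpha_2$ is of class~$\K$ the claim follows from Lemma~\ref{lemma:alternativeDefinitionNonuto}.

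The step I expect to need the most care is the one that is genuinely discrete-time in nature: justifying that the trajectory cannot leave $B_r$ in a single jump, which is what legitimises both the chaining of the decrease inequality and the use of the lower bound $\alpha_1(\|x(t)\|)\le V(t,x(t))$ for every $t$. As in the proof of Theorem~\ref{th:LyapAutSyst}, I would argue with the sublevel sets $\Omega_\beta^t=\{x\in B_r\mid V(t,x)\le\beta\}$ for $0<\beta<\alpha_1(r^-)$, whose closures lie strictly inside $B_r$: if $t_1$ is the first time with $x(t_1)\notin B_r$, the chain above still gives $V(t_1,x(t_1))\le V(t_0,x(t_0))<\alpha_1(r^-)$ while $\|x(t_1)\|\ge r$, and a short argument using continuity of $f$ (so that the connected component of the origin in the relevant sublevel set cannot reach the boundary of $B_r$) produces the contradiction. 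Apart from this point everything is a verbatim transcription of the autonomous case, so I would keep the write-up brief.
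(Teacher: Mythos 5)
Your proposal is correct and follows essentially the same route as the paper: sandwich $V$ between the time-independent $W_1,W_2$, convert to class $\K$ bounds via Lemma~\ref{lemma:limitPosDef}, chain the monotonicity of $V$ along the trajectory, and conclude from $\|x(t)\|\le\alpha_1^{-1}(\alpha_2(\|x(t_0)\|))$ with $\alpha_1^{-1}\circ\alpha_2$ of class $\K$ (Lemma~\ref{lemma:alternativeDefinitionNonuto}). The one place you go beyond the paper is the single-step escape from $B_r$: the paper simply asserts that a solution starting in $\Omega_{t_0,c}$ stays in $\Omega_{t,c}\subset B_r$, whereas your connected-component argument (the image $f(t,C_t)$ of the component $C_t$ of the origin is connected, contains $0$, and is contained in $\{W_1\le c\}$, hence cannot meet the sphere $\|x\|=r$ where $W_1>c$) actually justifies that invariance claim, which the bare decrease condition alone does not.
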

\begin{proof}
Choose $r>0$ and $c>0$ such that $B_r\subset D$ and $c<\min_{\|x\|=r}W_1(x)$. Then $\{x\in B^r\ |\ W_1(s)\leq c\}$ is in the interior of $B_r$. Define $\Omega_{t,c}$ as
\[
	\Omega_{t,c}= \{x\in B_r|V(t,x)\leq c\}
\]
The set $\Omega_{t,c}$ contains $\{x\in B^r\ |\ W_2(s)\leq c\}$, since $W_2(x)\leq c \Rightarrow V(t,x)\leq c$; for similar reasons $\Omega_{t,c}\subset \{x\in B^r\ |\ W_1(s)\leq c\}$. So we have
\[
	\{x\in B^r\ |\ W_2(s)\leq c\}\subset \Omega_{t,c} \subset \{x\in B^r\ |\ W_1(s)\leq c\} \subset B_r \subset D
\]
for all $t\geq 0$. Since $V(t+1,x)-V(t,x)\leq 0$ in $D$, for any $t_0\geq 0$ and $x(t_0)\in\Omega_{t_0,c}$, the solution starting at $(t_0,x(t_0))$ will stay in $\Omega_{t,c}$ for all $t\geq t_0$. We have shown that a solution is bounded and defined for all $t\geq t_0$. We now use Lemma \ref{lemma:alternativeDefinitionNonuto} (we still don't have sets defined on the norm of vector $x$).  Due to the second property of $V$ we have
\[
	V(t,x(t))\leq V(t_0,x(t_0)),\ \forall\ t\geq t_0
\]
Since $W_1$ and $W_2$ are positive definite matrix,  due to lemma \ref{lemma:limitPosDef} there are class $\K$ functions $\alpha_1$ and $\alpha_2$ defined in $[0,r)$ such that
\[
	\alpha_1(\|x\|)\leq W_1(x)\leq V(t,x)\leq W_2(x)\leq \alpha_2(\|x\|) \Rightarrow \alpha_1(\|x\|)\leq V(t,x)\leq \alpha_2(\|x\|)
\]
So we have (note that $\alpha_1$ is smaller than $V$, so to reach the same value of $V$, the argument of $\alpha_1$ has to be greater than the norm of the vector in V)
\[
	\|x(t)\|\leq \alpha_1^{-1}(V(t,x(t)))\leq \alpha_1^{-1}(V(t_0,x(t_0))\leq \alpha^{-1}(\alpha_2(\|x(t_0)\|))
\]
Since $\alpha_1^{-1}(\alpha_2(x))$ is a class $\K$ function we are done.
\end{proof}
\begin{theorem}[Lyapunov function for asymptotically stable nonautonomous systems]\label{theo:lyapNonautAsympt}
Suppose the assumptions of Theorem \ref{theo:lyapNonautStable} are satisfied and that it also  holds
\[
	V(t+1,f(x,t))-V(t,x)\leq -W_3(x),\ \forall\ t\geq 0, x\in D
\]
where $W_3(x)$ is a continuous  positive definite function on $D$. Then, $x=0$ is uniformly asymptotically stable. 
%
\end{theorem}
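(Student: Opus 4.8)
The plan is to build on the uniform stability already furnished by Theorem~\ref{theo:lyapNonautStable} and to quantify, via the comparison functions of Lemma~\ref{lemma:limitPosDef}, how fast $V$ must decay along a trajectory. First I would fix $r>0$ with $B_r\subset D$ and, exactly as in the proof of Theorem~\ref{theo:lyapNonautStable}, choose $c>0$ (independent of $t_0$) small enough that every solution with $\|x(t_0)\|<c$ stays in $B_r$ for all $t\ge t_0$ and obeys $\|x(t)\|\le\alpha_1^{-1}(\alpha_2(\|x(t_0)\|))$, where $\alpha_1,\alpha_2$ are class $\K$ functions on $[0,r)$ with $\alpha_1(\|x\|)\le V(t,x)\le\alpha_2(\|x\|)$ on $B_r$ (obtained by applying Lemma~\ref{lemma:limitPosDef} to $W_1$ and $W_2$). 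Applying Lemma~\ref{lemma:limitPosDef} once more, now to the continuous positive definite function $W_3$, yields a class $\K$ function $\alpha_3$ on $[0,r)$ with $\alpha_3(\|x\|)\le W_3(x)$ for $x\in B_r$.

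Next I would extract the key quantitative step. Along any such trajectory the scalar sequence $v_t:=V(t,x(t))$ is nonincreasing (by $V(t+1,f(t,x))-V(t,x)\le 0$) and nonnegative (since $V\ge W_1\ge 0$). Fix $\eta\in(0,r)$. As long as $v_t\ge\alpha_1(\eta)$ we have $\alpha_2(\|x(t)\|)\ge v_t\ge\alpha_1(\eta)$, hence $\|x(t)\|\ge\alpha_2^{-1}(\alpha_1(\eta))=:\mu(\eta)>0$, and therefore $W_3(x(t))\ge\alpha_3(\mu(\eta))=:\kappa(\eta)>0$. The hypothesis $V(t+1,f(t,x))-V(t,x)\le -W_3(x)$ then gives $v_{t+1}\le v_t-\kappa(\eta)$ at every step for which $v_t\ge\alpha_1(\eta)$.

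A counting argument finishes it. Since $v_{t_0}\le\alpha_2(\|x(t_0)\|)\le\alpha_2(c)$ and $v_t\ge 0$, the decrement bound shows that $v_t$ cannot remain $\ge\alpha_1(\eta)$ for more than $T(\eta):=\lceil\alpha_2(c)/\kappa(\eta)\rceil$ consecutive steps; so $v_t<\alpha_1(\eta)$ for some $t\le t_0+T(\eta)$, and monotonicity of $v_t$ keeps it below $\alpha_1(\eta)$ thereafter. Consequently $\alpha_1(\|x(t)\|)\le v_t<\alpha_1(\eta)$, i.e. $\|x(t)\|<\eta$, for all $t>t_0+T(\eta)$, with $T(\eta)$ independent of $t_0$. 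Combined with the uniform stability from Theorem~\ref{theo:lyapNonautStable} and the $t_0$-independence of $c$, this is exactly the definition of uniform asymptotic stability. (If one prefers the class $\KL$ formulation of Lemma~\ref{lemma:alternativeDefinitionNonuto}, the same ingredients let one compose $\alpha_1^{-1}$ with a $\KL$ bound derived from the difference inequality $v_{t+1}\le v_t-\alpha_3(\alpha_2^{-1}(v_t))$.)

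The only delicate point, and the one I would treat carefully, is converting ``$V$ not yet small'' into a strictly positive per-step decrease of $V$: this forces one to chain the three comparison functions ($\alpha_2^{-1}$ to pass from a lower bound on $V$ to a lower bound on $\|x\|$, then $\alpha_3$ to pass from $\|x\|$ to $W_3$) and to verify that every constant produced ($\mu(\eta)$, $\kappa(\eta)$, $T(\eta)$) depends only on $\eta$, $c$, $r$ and never on $t_0$. Everything else is bookkeeping that mirrors the autonomous LaSalle-type arguments already given.
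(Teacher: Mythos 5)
Your proposal is correct and follows essentially the same route as the paper's proof: chaining the comparison functions from Lemma~\ref{lemma:limitPosDef} (your $\mu(\eta)$ and $\kappa(\eta)$ are the paper's $\Gamma=\alpha_2^{-1}(\alpha_1(\eta))$ and $\alpha_3(\Gamma)$), obtaining a uniform per-step decrease of $V$ while $V\ge\alpha_1(\eta)$, and concluding by a counting argument that the dwell time $T(\eta)$ is independent of $t_0$. Your explicit application of the lemma to $W_3$ to get $\alpha_3$ is in fact slightly cleaner than the paper's wording of the same step.
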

\begin{proof} Consider $r>0$ such that $B_r\subset D$. Due to theorem \ref{lemma:limitPosDef}, there exist class $\K$ functions $\alpha_1,\alpha_2,\alpha_3$ on $[0,r)$ such that
\[
	\alpha_1(\|x\|)\leq V(t,x)\leq \alpha_2(\|x\|) \quad V(t+1,f(x,t))-V(t,x)\leq -\alpha_3(\|x\|)  
\]
For any fixed $\epsilon,\ r\geq \epsilon>0,$ there exists a positive constant $\delta\leq\epsilon$ that satisfy the stability property. Consider a value $\eta,\ 0<\eta<\epsilon$ and define
$\Lambda=\alpha_1(\eta)$. Consider $x(t_0)\in B_\delta$; if $V(t_0,x(t_0))<\Lambda$, then the definition is already satisfied since $\|x(t_0)\|$ is necessarily smaller then $\eta$ and $V$ is contracting along the trajectory.\\ 
If $x(t_0)$ satisfies $V(t_0,x(t_0))\geq\Lambda$, define $\Gamma=\alpha_2^{-1}(\Lambda)>0$ (note that $\Gamma\leq \eta$). It follows that if $x$ is such that $V(t,x)\geq \Lambda$, then $\|x\|\geq \Gamma$. Let $\Omega=\{x\ |\ \Gamma\leq\|x\|\leq \epsilon\}$, which is closed and bounded. For all $t\geq t_0$ such that $V(t,x(t))$ is greater than $\Lambda$, it also holds $\|x(t)\|\geq \Gamma>0$, so $\alpha_3(\|x(t)\|)\geq\alpha_3(\Gamma)$ which implies $-\alpha_3(\|x(t)\|)\leq-\alpha_3(\Gamma)$. So we have $V(t+1,f(x,t))-V(t,x)\leq -\alpha_3(\|x\|)\leq-\alpha(\Gamma)<0$. So it holds $\forall k\geq 0\ |\ \tilde{V}(t_0+k)\geq\Lambda$
\[
	\tilde{V}(t_0+k)=\tilde{V}(t_0)+\sum_{i=0}^{k-1} \left[\tilde{V}(t_0+i+1)-\tilde{V}(t_0+i)\right]\leq \tilde{V}(t_0)-k \alpha_3(\Gamma)
\]\
This shows that there exists a $\bar{k}$, that depends on $\eta$ and on $\delta$ but not on $t_0$, such that $\Lambda\leq\tilde{V}(t_0+\bar{k})< \Lambda+\alpha_3(\Gamma)$, from which it follows that $\tilde{V}(t_0+\bar{k}+1)<\Lambda$ and as already discussed $\|x(t_0+k)\|<\eta$ for all $k\geq \bar{k}+1$. 
\end{proof}
\begin{definition}A function $V(t,x)$ is said to be
\begin{itemize}
\item \textbf{positive semidefinite} if $V(t,x)\geq 0$
\item \textbf{positive definite} if $V(t,x)\geq W_1(x)$ with $W_1$ positive definite
\item \textbf{radially unbounded} if $W_1(x)$ is so
\item \textbf{decrescent} if $V(t,x)\leq W_2(x)$ with $W_2$ positive definite.
\end{itemize}
\end{definition}
\begin{theorem}[Lyap exponentially bounded implies exponential stab]\label{theo: nonAutExponStab}
Let $x=0$ be an equilibrium point for the nonautonomous system 
\[
	x(t+1) = f(t,x(t))
\]
with $f\colon (\Tset\times D)\to\R^n,\ 0\in D\subset\R^n$ locally Lipschitz in $x$ on $\Tset\times D$. Let $V\colon \Tset\times D\to\R$ be a positive definite continuous on $x$ function such that 
\begin{gather*}
	V(t,x)< a\|x\|^2\\
	\Delta(t,x):=V(t+1,f(t,x))-V(t,x)\leq -b\|x\|^2 
\end{gather*}
for all $t\geq 0$ and for all $x\in D$, where $a$ and $b$ are positive constants. Then $x=0$ is exponentially stable. If the assumptions hold globally, then the equlibrium point is globally exponentially stable.
\end{theorem}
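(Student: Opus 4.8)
The plan is to turn the two quadratic bounds on $V$ into a single geometric‑contraction inequality for $V$ along solutions, and then to squeeze $\|x(t)\|^2$ between $V(t,x(t))$ and $\|x(t_0)\|^2$.

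First observe that a quadratic \emph{lower} bound on $V$ comes for free from positive definiteness. For every $t\ge 0$ and $x\in D$ the difference $\Delta(t,x)$ is assumed well defined, so $f(t,x)$ lies in the domain $D$ of $V$ and $V(t+1,f(t,x))\ge 0$; hence $0\le V(t+1,f(t,x))=V(t,x)+\Delta(t,x)\le V(t,x)-b\|x\|^2$, which gives $V(t,x)\ge b\|x\|^2$. Comparing with the hypothesis $V(t,x)<a\|x\|^2$ this also forces $b<a$. Next, since $\|x\|^2> V(t,x)/a$, the second hypothesis yields $\Delta(t,x)\le -b\|x\|^2<-(b/a)V(t,x)$, so that
\[ V(t+1,f(t,x))\ \le\ \rho\,V(t,x),\qquad \rho:=1-\tfrac{b}{a}\in(0,1), \]
for all $t\ge 0$ and $x\in D$ (the case $x=0$ being trivial).

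Now fix $t_0\ge 0$ and an initial state $x(t_0)\in D$; since $\Delta(t,\cdot)$ is defined on all of $D$, $f(t,\cdot)$ maps $D$ into $D$, so the solution $x(t)$ stays in $D$ and the displayed inequality may be iterated: $V(t,x(t))\le\rho^{\,t-t_0}V(t_0,x(t_0))$ for all $t\ge t_0$. Combining this with $b\|x(t)\|^2\le V(t,x(t))$ and $V(t_0,x(t_0))<a\|x(t_0)\|^2$ gives
\[ \|x(t)\|^2\ \le\ \frac{a}{b}\,\rho^{\,t-t_0}\,\|x(t_0)\|^2 ,\]
i.e. $\|x(t)\|\le k\|x(t_0)\|e^{-\lambda(t-t_0)}$ with $k=\sqrt{a/b}$ and $\lambda=-\tfrac12\ln\rho>0$, which is exactly \eqref{eq:exponentiallyStable}. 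In the local case one takes the constant $c$ of the definition to be any $c>0$ with $B_c\subset D$ (alternatively, as in the proof of Theorem~\ref{theo:lyapNonautStable}, one restricts to a positively invariant sublevel set $\{x\in B_r\mid V(t,x)\le c_0\}\subset B_{\sqrt{c_0/b}}$ with $c_0<b\,r^2$ and sets $c=\sqrt{c_0/a}$); if the hypotheses hold with $D=\R^n$ the estimate holds for every $x(t_0)$, giving global exponential stability. The only step requiring any care is the opening observation — that mere nonnegativity of $V$ already delivers the lower bound $V\ge b\|x\|^2$ and the strict inequality $b<a$ that make $\rho\in(0,1)$; everything after that is bookkeeping.
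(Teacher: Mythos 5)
Your proof is correct, and its core is the same as the paper's: both turn $\Delta(t,x)\le -b\|x\|^2$ together with $V(t,x)<a\|x\|^2$ into the geometric contraction $V(t+1,f(t,x))\le(1-b/a)V(t,x)$ and iterate. Where you genuinely diverge is in how you get back from $V$ to $\|x\|$. The paper has no quadratic lower bound on $V$ available, so it recovers $\|x(t)\|^2$ through the inequality $\|x(t)\|^2\le -(1/b)\Delta(t,x(t))$ and then bounds $-\Delta$ by the decaying value of $V$; as written that step is rather garbled (a missing square on $\|x(t)\|$, a sign slip in chaining $\Delta\le -cV$ with the decay of $V$, and an unexplained shift $t_1=t_0+p$ that is not actually needed, since $V(t_0,x(t_0))<a\|x(t_0)\|^2$ already holds at $t_0$). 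Your opening observation -- that $0\le V(t+1,f(t,x))\le V(t,x)-b\|x\|^2$ forces $V(t,x)\ge b\|x\|^2$, and hence $b<a$ -- supplies the two-sided sandwich $b\|x\|^2\le V(t,x)<a\|x\|^2$ for free and makes the conclusion a one-line squeeze with explicit constants $k=\sqrt{a/b}$, $\lambda=-\tfrac12\ln(1-b/a)$. This is cleaner and repairs the paper's closing step; it also dispenses with the need to invoke $\Delta$ a second time. Both arguments share the same unstated hypothesis, which you at least flag: one must read the assumptions as implying $f(t,\cdot)$ maps $D$ into $D$ (or restrict to a positively invariant sublevel set) so that the contraction can be iterated.
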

\begin{proof}
For any given trajectory of the system starting from $x_0\in D$, due to the assumptions it holds that
\[
	\Delta(t,x(t))\leq -(b/a) V(t,x(t)) \leq -cV(t,x(t)),\ 0<c<1
\]
Exploiting the definition of $\Delta$, we have
\[
	V(t,x(t))\leq (1-c) V(t-1,x(t-1))\leq \dots\leq (1-c)^{t-t_1}V(t_1,x(t_1)),\ t\geq t_1\geq t_0
\]
Since $c<1$, then $(1-c)^{t-t_1}=e^{-\gamma (t-t_1)},\ \gamma>0$, so 
\[
	V(t,x(t))\leq  e^{-\gamma (t-t_1)}V(t_1,x(t_1))
\]
Fixing $t_1=t_0+p$, $p$ can be chosen such that $V(t_1,x(t_1))\leq d\|x(t_0)\|$, with $d$ independent of $x(t_0)$. Now, combining the previous results,
\[
	\Delta(t,x(t))\leq -cV(t,x(t))\leq -c e^{-\gamma (t-t_1)}V(t_1,x(t_1))
\]
from which it follows
\[
	\|x(t)\|\leq -(1/b)\Delta(t,x(t))\leq \frac{c}{b} e^{-\gamma (t-t_1)}V(t_1,x(t_1))\leq \frac{dc}{b} e^{-\gamma (t-t_0)}\|x(t_0)\|e^{\gamma p}
\]
which has the form of equation \eqref{eq:exponentiallyStable}
\end{proof}

\begin{theorem}[Exp stability assures presence Lyap funct nonauton]\label{theo:expStamImpliesLyapNonaut}
Let $x=0$ be an equilibrium point for the system 
\[
	x(t+1)=f(t,x)
\] 
where $f\colon \Tset\times D\to\R^n$ is locally Lipschitz in $x$ on $\Tset\times D$, and $D=\{x|\|x\|<r\}$. If there exist positive constants $k,c,c<r/k$ and $\lambda,\lambda<1$  such that for any intial $x(t_0)$ in $B_c=\{x|\|x\|<c\}\subset D$ the equilibrium point is exponentially stable, that is
\[
	\|x(t)\|\leq k\|x(t_0)\| e^{-\lambda (t-t_0)},\ \forall\|x(t_0)\|\in B_c
\]
then there exists a  Lyapunov function $V(t,x)$ for the system. The latter satisfies the following inequalities
\[
	\begin{array}{c}
	c_1\|x\|^2\leq V(t,x) \leq c_2 \|x\|^2\\
	V(t+1,f(t,x))-V(t,x)\leq -c_3 \|x\|^2\\
	|V(t,x)-V(t,y)|\leq c_4 \|x-y\|(\|x\|+\|y\|)
	\end{array}
\]
for all $x,y\in B_\delta$ and for some positive constants $c_1,c_2,c_3$ and $c_4$.
\end{theorem}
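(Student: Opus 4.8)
The plan is to mimic the construction used in Theorem~\ref{theo:expStabImpliesPresenceofLyapAuto}, now carrying the initial time as a parameter. Denote by $\phi(\tau;t,x)$ the solution of $x(\cdot+1)=f(\cdot,x(\cdot))$ evaluated at time $\tau\ge t$ with $\phi(t;t,x)=x$. By the exponential-stability hypothesis, if $x\in B_c$ then $\|\phi(t+m;t,x)\|\le k\|x\|e^{-\lambda m}<kc<r$ for all $m\ge 0$, so the whole forward orbit stays in $D$ and $\phi$ is well defined. I would set
\[
V(t,x)=\sum_{m=0}^{N-1}\phi\T(t+m;t,x)\,\phi(t+m;t,x),
\]
with $N\in\natural$ a free parameter fixed below.

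First I would establish the quadratic bounds. The $m=0$ term alone gives $V(t,x)\ge x\T x=\|x\|^2$, so $c_1=1$ works; for the upper bound I substitute the exponential estimate and sum the geometric series, getting $V(t,x)\le k^2\frac{1-e^{-2\lambda N}}{1-e^{-2\lambda}}\|x\|^2=:c_2\|x\|^2$ (here $e^{-\lambda}<1$ since $\lambda>0$). Next, for the decrease condition I use the flow identity $\phi(\tau;t+1,f(t,x))=\phi(\tau;t,x)$ for $\tau\ge t+1$: this telescopes $V(t+1,f(t,x))-V(t,x)$ down to $\phi\T(t+N;t,x)\phi(t+N;t,x)-x\T x\le(k^2e^{-2\lambda N}-1)\|x\|^2$. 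Choosing $N$ so large that $k^2e^{-2\lambda N}<1$ (any $N>\tfrac{\ln k}{\lambda}$ works) makes $c_3:=1-k^2e^{-2\lambda N}>0$, and $N$ is now fixed.

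For the third inequality I need a Lipschitz constant $L$ for $f(t,\cdot)$ that is \emph{uniform in $t$} on a compact neighbourhood of the origin inside $D$; granting this (it is what ``locally Lipschitz in $x$ on $\Tset\times D$, uniformly in $t$'' buys), an induction on $m$ using $\|f(t,u)-f(t,v)\|\le L\|u-v\|$ gives $\|\phi(t+m;t,x)-\phi(t+m;t,y)\|\le L^m\|x-y\|$. Then, writing each summand as $\phi\T\phi-\psi\T\psi=(\phi-\psi)\T\phi+\psi\T(\phi-\psi)$ and bounding $\|\phi(t+m;t,x)\|,\|\phi(t+m;t,y)\|\le ke^{-\lambda m}$ times the respective norms, I obtain
\[
|V(t,x)-V(t,y)|\le\Bigl[k\sum_{m=0}^{N-1}(Le^{-\lambda})^m\Bigr](\|x\|+\|y\|)\,\|x-y\|=:c_4(\|x\|+\|y\|)\|x-y\|,
\]
valid on any ball $B_\delta$ with $\delta\le c$ small enough that both orbits remain in the compact set on which $L$ was taken. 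On such a $B_\delta$, $V$ is continuous in $x$ and, together with the first two inequalities, positive definite and decrescent, hence a Lyapunov function in the required sense.

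The step I expect to be the real obstacle is exactly the time-uniformity of $L$: in the autonomous theorem continuous differentiability on $D$ delivers this for free on compact subsets, whereas here one must either assume $\partial f/\partial x$ is bounded uniformly in $t$ on a compact neighbourhood of $0$, or read this into the standing hypothesis. Everything else — the geometric-series estimates, the telescoping, and the induction — is routine and parallels Theorem~\ref{theo:expStabImpliesPresenceofLyapAuto} verbatim with $t$ as a spectator index. A minor bookkeeping point is the order of the quantifiers: pick $N$ (hence $c_2,c_3$) first from $k,\lambda$, then $\delta$ from $c$, $N$ and the locale of $L$, and finally $c_4$ from $k,L,N$; since none of these depend on $t_0$, $V$ is a genuine uniform Lyapunov function for the nonautonomous system.
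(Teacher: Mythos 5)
Your construction is exactly the one the paper uses: the same finite sum $V(t,x)=\sum_{m=0}^{N-1}\phi\T(t+m;t,x)\phi(t+m;t,x)$, the same geometric-series bounds, the same telescoping via the semigroup identity, and the same induction with a $t$-uniform Lipschitz constant on a compact ball (which the paper likewise invokes, reading the uniformity into the standing hypothesis as you suggest). The proposal is correct and matches the paper's proof essentially verbatim.
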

\begin{proof} 
Let $\phi(t,t_0;x)$ be the solution of $x(t+1)=f(t,x(t))$ at time $t$ starting from $x(t_0)=x$ at time $t_0$. It holds $\phi(t_0,t_0;x)=x$. Let
\[
	V(t,x) = \sum_{k=t}^{N-1+t}\phi(k,t;x)\T\phi(k,t;x)
\]
for some integer variable $N$ to be set. Then 
\[V(t,x) = x\T x+\sum_{k=t+1}^{N-1+t}\phi(k,t;x)\T\phi(k,t;x)\geq x\T x = \|x\|\]
and on the other hand, due to the exponential stability we have
\[
	V(t,x) = \sum_{k=t}^{N-1+1}\phi(k,t;x)\T\phi(k,t;x) \leq  \sum_{\tau=t}^{N-1+t} k^2\|x\|^2 e^{-2\lambda (\tau-t)}\leq k^2\left(\frac{1-(e^{-2\lambda})^{N}}{1-e^{-2\lambda}}\right)\|x\|^2
\]
We have shown that there exists $c_1$ and $c_2$ such that
\[
	c_1\|x\|^2\leq V(t,x) \leq c_2 \|x\|^2
\]
is satisfied. Now, since $\phi(t+1+k,t+1;f(t,x)) = \phi(t+1+k,t+1;\phi(t+1,t;x))=\phi(t+1+k,t;x)$,
\begin{align*}
	V(t+1,&f(t,x))-V(t,x)=\\ 
	&= \sum_{k=t+1}^{N-1+t+1}\phi(k,t+1;f(t,x))\T\phi(k,t+1;f(t,x))-\sum_{k=t}^{N-1+t}\phi(k,t;x)\T\phi(k,t;x)=\\
	 			&=\sum_{\Delta=0}^{N-1}\phi(t+1+\Delta,t;x)\T\phi(t+1+\Delta,t;x)-\sum_{\Delta=0}^{N-1}\phi(t+\Delta,t;x)\T\phi(t+\Delta,t;x) =\\
	 			&= \phi(t+N,t;x)\T\phi(t+N,t;x)-\phi(t,t;x)\T\phi(t,t;x)\\
	 			&\leq k^2e^{-2\lambda N}\|x\|^2-\|x\|^2 = -(1-k^2e^{-2\lambda N})\|x\|^2
\end{align*}
Now we can choose $N$ big enough so that $1-k^2e^{-2\lambda N}$ is greater than $0$ and also the second property has been proven. For the third property, since $B_c$ is a compact set, function $f(t,x)$ is Lipschitz in $B_\delta$ uniformly in $t$, with a Lipschitz constant $L$, so it holds $\|f(t,x)-f(t,y)\|\leq L\|x-y\|\ \forall t\in\Tset$. Then 
\begin{align*}
	\|\phi(t+\Delta+1,t;x)-\phi(t+\Delta+1,t;y)\|&=\|f(t+\Delta,\phi(t+\Delta,t;x))-f(t+\Delta,\phi(t+\Delta,t;y))\|\\
	\leq L\|\phi(t+\Delta,t;x)-\phi(t+\Delta,t;y)\|
\end{align*}
and by induction
\[
	\|\phi(t+\Delta,t;x)-\phi(t+k,t;y)\|\leq L^\Delta\|x-y\|
\]
Proceeding as in the proof of Theorem \ref{theo:expStabImpliesPresenceofLyapAuto} we have
\begin{align*}
	|V(t,x)-V(t,y)|&=\leq \sum_{k=t}^{N-1+t}[\|\phi\T(k,t;x)\|+\|\phi\T(k,t;y)\|]L^k\|x-y\|\\
			&\leq \left[\sum_{\tau=t}^{N-1+t} ke^{-\lambda\tau} L^k\right](\|x\|+\|y\|)\|x-y\|\\
			&\leq c_4 (\|x\|+\|y\|)\|x-y\|
\end{align*}
and so we have proven the last inequality.
\end{proof}

\subsection{Linear systems and Linearisation}
Consider now the linear time variant system
\begin{equation}\label{eq:linearSystemNonAut}
	x(t+1)=A(t)x(t)
\end{equation}
which has an equilibrium point in the origin. The following theorem holds
\begin{theorem}[Lyapunov function for linear time variant syst ]\label{theo:quadraticFunctionNonauton}
Consider the system \eqref{eq:linearSystemNonAut}. If there exists a continuous, symmetric, bounded positive definite matrix $P(t), 0<p_1 I\leq P(t)\leq p_2 I,\ \forall t\geq 0$, which satisfies the equation
\begin{equation}\label{eq:lyapEqNonAut}
	A(t)\T P(t+1) A(t)-P(t)=-Q(t)
\end{equation}
with $Q(t)$ continuous, symmetric, positive definite matrix, $Q(t)\geq q_1 I >0$, then the equilibrium point $x=0$ is globally exponentially stable.
\end{theorem}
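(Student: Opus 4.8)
The plan is to exhibit the quadratic form $V(t,x)=x\T P(t)x$ as a Lyapunov function and then invoke Theorem~\ref{theo: nonAutExponStab}. First I would record the two-sided bound: since $p_1 I\le P(t)\le p_2 I$ for all $t\ge 0$, we have $p_1\|x\|^2\le V(t,x)\le p_2\|x\|^2$, so $V$ is positive definite with $W_1(x)=p_1\|x\|^2$, it is continuous in $x$ for each $t$ (being a quadratic form), and it satisfies the decrescent bound $V(t,x)< a\|x\|^2$ required in Theorem~\ref{theo: nonAutExponStab} for any constant $a>p_2$.

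Next I would compute the one-step increment along the dynamics. Because $f(t,x)=A(t)x$,
\[
	\Delta(t,x)=V(t+1,f(t,x))-V(t,x)=x\T\!\big(A(t)\T P(t+1)A(t)-P(t)\big)x=-x\T Q(t)x\le -q_1\|x\|^2,
\]
where the middle equality is exactly the Lyapunov equation~\eqref{eq:lyapEqNonAut} and the last inequality uses $Q(t)\ge q_1 I$. Thus the increment hypothesis of Theorem~\ref{theo: nonAutExponStab} holds with $b=q_1$, globally in $x$. It remains only to check that the theorem's standing assumptions apply, i.e. that $f(t,x)=A(t)x$ is locally Lipschitz in $x$ uniformly in $t$: from $A(t)\T P(t+1)A(t)=P(t)-Q(t)\le p_2 I$ and $P(t+1)\ge p_1 I$ one gets $p_1\,A(t)\T A(t)\le p_2 I$, hence $\|A(t)\|\le\sqrt{p_2/p_1}$ for all $t$, so $f$ is in fact globally Lipschitz uniformly in $t$. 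Applying Theorem~\ref{theo: nonAutExponStab} then yields that $x=0$ is globally exponentially stable.

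I do not expect any real obstacle here; the argument is a direct substitution into the already-proven exponential-stability theorem. The only points requiring a word of care are the harmless passage from the non-strict bound $V\le p_2\|x\|^2$ to the strict bound $V<a\|x\|^2$ (take $a>p_2$), and the fact that boundedness of $A(t)$ is not postulated but is forced by the Lyapunov equation as above. If one prefers a self-contained derivation, the same conclusion follows directly: from $\Delta(t,x(t))=-x(t)\T Q(t)x(t)\le-(q_1/p_2)V(t,x(t))$ one obtains $V(t,x(t))\le(1-q_1/p_2)^{t-t_0}V(t_0,x(t_0))$ (with $0<q_1/p_2\le 1$, the equality case $q_1=p_2$ forcing $A(t)\equiv 0$ and hence triviality), and sandwiching this between $p_1\|x(t)\|^2\le V(t,x(t))$ and $V(t_0,x(t_0))\le p_2\|x(t_0)\|^2$ gives $\|x(t)\|\le\sqrt{p_2/p_1}\,(1-q_1/p_2)^{(t-t_0)/2}\|x(t_0)\|$ for every initial condition, which is the required global exponential bound.
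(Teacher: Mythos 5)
Your proposal is correct and follows essentially the same route as the paper: take $V(t,x)=x\T P(t)x$, use $p_1I\le P(t)\le p_2I$ for the quadratic sandwich, substitute the Lyapunov equation \eqref{eq:lyapEqNonAut} to get $\Delta(t,x)=-x\T Q(t)x\le -q_1\|x\|^2$, and invoke Theorem~\ref{theo: nonAutExponStab}. Your extra observations (deriving boundedness of $A(t)$ from the equation, and the self-contained geometric-decay estimate) are sound but not needed for the paper's argument.
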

\begin{proof}
The Lyapunov function $V(t,x)=x\T P(t)x$  satisfies
\[
	p_1\|x\|^2\leq V(t,x)\leq p_2\|x\|^2	
\]
Moreover, consider the absolute difference
\begin{align*}
	V(t+1,A(t)x)-V(t,x) &= x\T A(t)\T P(t+1)A(t)x-x\T P(t) x\\
	& =  x\T [A(t)\T P(t+1)A(t)- P(t)] x = -x\T Q(t)x 
\end{align*}
Therefore it holds
\[
	V(t+1,A(t)x)-V(t,x)\leq-q_1\|x\|^2
\]
and the assumptions of theorem \ref{theo: nonAutExponStab} are satisfied.
\end{proof}
Define now the transition matrix $\Phi(t,t0)$ which is such  that $x(t) = \Phi(t,t_0)x(t_0)$. It holds $\Phi(t_0,t_0)=I$ and for linear time variant system its form is $\Phi(t,t_0)=A(t-1)\cdot  A(t-2)\cdots A(t_0)$.
\begin{theorem}[Condition on the transition matrix to have exp stab]\label{theo:conditionTransitionMatrixAsymp}
The equilibrium point $x=0$ of the linear time variant system
\[
	x(t+1)=A(t)x(t)
\]
 is uniformly asymptotically stable if and only if the state transition matrix satisfies
\begin{equation}\label{eq:transMatrixExpo}
	\|\Phi(t,t_0)\|\leq ke^{-\lambda(t-t_0)},\ \forall \ t\geq t_0\geq 0
\end{equation}
for some positive constants $k$ and $\lambda$.
\end{theorem}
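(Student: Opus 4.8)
The plan is to prove the two implications separately, exploiting three facts: linearity, so that $x(t)=\Phi(t,t_0)x(t_0)$ and hence the operator norm $\|\Phi(t,t_0)\|=\sup_{\|v\|=1}\|\Phi(t,t_0)v\|$ controls all trajectories simultaneously; the composition identity $\Phi(t,t_0)=\Phi(t,s)\Phi(s,t_0)$; and the class-$\KL$ characterisation of uniform asymptotic stability in Lemma~\ref{lemma:alternativeDefinitionNonuto}. Sufficiency is immediate: if \eqref{eq:transMatrixExpo} holds then
\[
	\|x(t)\|=\|\Phi(t,t_0)x(t_0)\|\leq\|\Phi(t,t_0)\|\,\|x(t_0)\|\leq k\,e^{-\lambda(t-t_0)}\|x(t_0)\|
\]
for every $t\geq t_0\geq 0$ and every initial state, and since $\beta(r,s):=k r e^{-\lambda s}$ is a class $\KL$ function, Lemma~\ref{lemma:alternativeDefinitionNonuto} gives (global) uniform asymptotic stability.

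For necessity I would start from uniform asymptotic stability and invoke Lemma~\ref{lemma:alternativeDefinitionNonuto} to obtain a class $\KL$ function $\beta$ and a constant $c>0$ independent of $t_0$ with $\|x(t)\|\leq\beta(\|x(t_0)\|,t-t_0)$ whenever $\|x(t_0)\|<c$. Applying this with $x(t_0)=(c/2)v$ for unit vectors $v$ and rescaling turns a trajectory bound into an operator-norm bound,
\[
	\|\Phi(t,t_0)\|\leq g(t-t_0),\qquad g(s):=\tfrac{2}{c}\,\beta(c/2,s),
\]
where $g$ is nonincreasing and $g(s)\to 0$ as $s\to\infty$; in particular $\|\Phi(t,t_0)\|\leq g(0)=:M$ uniformly in $t_0$ and $t$. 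Because $g(s)\to 0$, I can fix an integer window length $T\geq 1$, independent of $t_0$, with $g(T)\leq 1/2$.

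The composition identity then propagates this one-step contraction into exponential decay. Writing $\Phi(t_0+mT,t_0)$ as the product of the $m$ blocks $\Phi(t_0+(j+1)T,t_0+jT)$, $j=0,\dots,m-1$, each of operator norm at most $g(T)\leq 1/2$, gives $\|\Phi(t_0+mT,t_0)\|\leq(1/2)^m$ for all $m\in\N$ and all $t_0\geq 0$. For an arbitrary $t\geq t_0$ I would write $t-t_0=mT+\rho$ with $0\leq\rho<T$ and $m=\lfloor(t-t_0)/T\rfloor$, factor $\Phi(t,t_0)=\Phi(t,t_0+mT)\Phi(t_0+mT,t_0)$, bound the first factor by $M$, and conclude
\[
	\|\Phi(t,t_0)\|\leq M\,(1/2)^m\leq 2M\,(1/2)^{(t-t_0)/T}=2M\,e^{-\lambda(t-t_0)},\qquad\lambda:=\tfrac{\ln 2}{T}>0,
\]
which is exactly \eqref{eq:transMatrixExpo} with constant $2M$.

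The main obstacle is the necessity direction, and within it the passage from the mere decay $g(s)\to0$ to a genuine exponential rate. This is precisely where the \emph{uniformity} in $t_0$ is indispensable: it is what allows a single window length $T$ to serve every initial time, after which the semigroup property of $\Phi$ upgrades a contraction over one window of length $T$ to geometric decay over successive windows. The remaining ingredients --- rescaling trajectory bounds into operator-norm bounds, the estimate $\|\Phi(t,t_0)v\|\leq\|\Phi(t,t_0)\|$ for $\|v\|=1$, and the floor-function arithmetic --- are routine.
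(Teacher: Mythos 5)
Your proposal is correct and follows essentially the same route as the paper: both directions hinge on the same mechanism, namely that uniform asymptotic stability yields a single window length $T$ (the paper's $\tau$) over which $\|\Phi(t_0+T,t_0)\|\leq 1/2$ uniformly in $t_0$, after which the composition identity and a uniform bound $M$ on the transition matrix upgrade this to geometric, hence exponential, decay. The only cosmetic difference is that you extract the uniform bound and the uniform decay directly from the class-$\KL$ characterisation of Lemma~\ref{lemma:alternativeDefinitionNonuto}, whereas the paper packages the same facts as two auxiliary lemmas on $\|\Phi(t,t_0)\|$.
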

\begin{proof}
We first introduce 2 lemmas
\begin{lemma} \label{lemma:transMatrUnifStable}
If system \eqref{eq:linearSystemNonAut} is uniformly stable, then there exists a constant $M$ independent of $t_0$ such that $\|\Phi(t,t_0)\|\leq M$ for all $t\geq t_0$.
\end{lemma}
\begin{proof}
If the system is stable, fixing $\epsilon$, we can choose $\delta>0$ such that if $\|x(t_0)\|\leq \delta$, then $\|\Phi(t,t_0)x(t_0)\|\leq \epsilon$ for all $t\geq t_0$. It follows 
\[
	\max_{\|x\|=\delta}\|\Phi(t,t_0)x\|=\max_{\|x\|=1}\|\Phi(t,t_0)\delta x\|=\delta\max_{\|x\|=1}\|\Phi(t,t_0)x\|\leq \epsilon
\]
so, using the induced norm,
\[
	\|\Phi(t,t_0)\|=\max_{\|x\|=1}\|\Phi(t,t_0)x\|\leq \frac{\epsilon}{\delta^{-1}}:=M
\]
\end{proof}
\begin{lemma}\label{lemma:transMatrAsymptUnifStable}
The following statements are equivalent
\begin{enumerate}
\item System \eqref{eq:linearSystemNonAut} is uniformally asymptotically stable
\item System \eqref{eq:linearSystemNonAut} is globally uniformly asymptotically stable
\item $\|\Phi(t,t_0)\|\to 0$ as $t\to\infty$ uniformly in $t_0$
\item Given $\{z_i\}_{i=1}^n$ a basis of $R^n$, then $\|\Phi(t,t_0)z_i\|\to 0$ as $t\to\infty$ uniformly in $t_0$
\end{enumerate}
\end{lemma}
\begin{proof}
$(i)\Rightarrow(ii)$ follows from linearity, and the implications $(ii)\Rightarrow(iv)$ and $(iii)\Rightarrow(i)$ can be easily verified. Concerning $(iv)\Rightarrow(iii)$ we can proceed as follows: if $(iv)$ holds, then for every $\epsilon>0$ there exists a time $\tau(\epsilon)$ independent of $t_0$ such that $\|\Phi(t, t_0)z^i\| < \epsilon$ for all $t\geq t_0+\tau(\epsilon)$ and $i=1,\dots,n$. For every $x(t_0) = \sum_{i=1}^n y_i z_i$, $\|x(t_0)\| = 1$, there exists a positive constant $a$ such that $\max |y_i|\leq a^{-1}$. Thus
\[
	\|\Phi(t,t_0)x(t_0)\|=\left\Vert\sum_{i=1}^n y_i\Phi(t,t_0) z_i\right\Vert\leq a^{-1}n\epsilon,\ t\geq t_0+\tau(\epsilon)
\]
and again, due to the induced norm, this proves $(iii)$.
\end{proof}
Going back to the proof of the theorem, if the transition matrix satisfies \eqref{eq:transMatrixExpo}, then due to Lemma \ref{lemma:transMatrAsymptUnifStable} the system is uniformly asymptotically stable. On the other hand suppose that the system is uniformly asymptotically stable. By Lemma \ref{lemma:transMatrAsymptUnifStable}, there exists $\tau\geq 0$ such that $\|\Phi(t+\tau,t)\|\leq 1/2$ for all $t\geq t_0$. It follows
\[
	\|\Phi(t_0+k\tau,t_0)\|\leq  \|\Phi(t_0+k\tau,t_0+(k-1)\tau)\|\dots \|\Phi(t_0+\tau,t_0)\|\leq 2^{-k}
\]
Now suppose $t_0+k\tau\leq t<t_0+(k+1)\tau,\ t\geq t_0,\ k\in\N$, then
\[
	\|\Phi(t,t_0)\|\leq \|\Phi(t,t_0+k\tau)\|\|\Phi(t_0+k\tau,t_0)\|\leq \|\Phi(t,t_0+k\tau)\|2^{-k}
\]
Now, due to Lemma \ref{lemma:transMatrUnifStable}, there exists a constant $M'$ such that $\|\Phi(t,t_0+k\tau)\|\leq M'$ for all $t\geq t_0+k\tau,\ k\in\N$ and so
\[
	\|\Phi(t,t_0)\|\leq M'2^{-[(t-t_0)/\tau-1]},\ t\geq t_0
\]
Choosing $k=2M'$ and $\lambda=-1/(\tau)\log_e(2)$ the theorem is proved.
\end{proof}
This theorem show that uniform asymptotic stability is equivalent to exponential stability.
\begin{theorem}[An exp stable lin syst has a Lyap funct]\label{theo:expStabLinearHasLyapFunct}
Let $x=0$ be the exponentially stable equilibrium point of the linear time variant system
\[
	x(t+1)=A(t)x(t)
\]
and suppose that $A(t)$ is bounded. Let $Q(t)$ be a bounded, positive definite, symmetric matrix, i.e. $0<q_1 I\leq Q(t)\leq q_2 I$. Then, there is a bounded, positive definite, symmetric matrix $P(t)$, i.e. $0<p_1 I\leq P(t)\leq p_2 I$, that satisfies \eqref{eq:lyapEqNonAut}. Hence $V(t,x)=x\T P(t)x$ is a Lyapunov function for the system, that also satisfies the conditions of Theorem \ref{theo: nonAutExponStab}.
\end{theorem}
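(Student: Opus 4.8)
\section*{Proof proposal}

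The plan is to write down the time‑varying analogue of formula \eqref{eq:Pconstr}, namely
\[
	P(t) = \sum_{k=t}^{\infty} \Phi(k,t)\T Q(k)\, \Phi(k,t),
\]
where $\Phi$ is the state transition matrix, and then to check in turn that the series converges, that it is sandwiched between multiples of the identity, and that it solves \eqref{eq:lyapEqNonAut}. Since the system is linear, exponential stability holds globally and therefore, by Theorem \ref{theo:conditionTransitionMatrixAsymp} (equivalently, directly from $\|\Phi(t,t_0)\| = \max_{\|x\|=1}\|\Phi(t,t_0)x\|$), there are constants $\kappa,\lambda>0$ with $\|\Phi(k,t)\|\leq \kappa e^{-\lambda(k-t)}$ for all $k\geq t\geq 0$. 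Hence each summand obeys $\|\Phi(k,t)\T Q(k)\Phi(k,t)\|\leq \kappa^2 q_2\, e^{-2\lambda(k-t)}$, the matrix series converges absolutely (it is dominated by a geometric series), and
\[
	0 \prec q_1 I \preceq Q(t) = \Phi(t,t)\T Q(t)\Phi(t,t) \preceq P(t) \preceq \frac{\kappa^2 q_2}{1-e^{-2\lambda}}\, I,
\]
which gives the required bounds $0<p_1 I\leq P(t)\leq p_2 I$ with $p_1=q_1$, $p_2=\kappa^2 q_2/(1-e^{-2\lambda})$. Symmetry, and continuity in $t$ when $Q$ and $A$ are continuous, are inherited termwise.

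Next I would verify \eqref{eq:lyapEqNonAut} using the composition property $\Phi(k,t)=\Phi(k,t+1)A(t)$, valid for $k\geq t+1$ from the definition $\Phi(t,t_0)=A(t-1)\cdots A(t_0)$. Because the series converges absolutely, the bounded linear map $X\mapsto A(t)\T X A(t)$ commutes with the summation, so
\begin{align*}
	A(t)\T P(t+1) A(t) &= A(t)\T\!\left[\sum_{k=t+1}^{\infty}\Phi(k,t+1)\T Q(k)\Phi(k,t+1)\right]\!A(t)\\
	&= \sum_{k=t+1}^{\infty}\Phi(k,t)\T Q(k)\Phi(k,t) \;=\; P(t)-Q(t),
\end{align*}
i.e. $A(t)\T P(t+1)A(t)-P(t)=-Q(t)$. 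Finally, for $V(t,x)=x\T P(t)x$ the sandwich bound gives $p_1\|x\|^2\leq V(t,x)\leq p_2\|x\|^2$, and
\[
	V(t+1,A(t)x)-V(t,x)=x\T\big(A(t)\T P(t+1)A(t)-P(t)\big)x=-x\T Q(t)x\leq -q_1\|x\|^2,
\]
so $V$ meets the hypotheses of Theorem \ref{theo: nonAutExponStab} (with $a=p_2$, $b=q_1$), which also re‑confirms exponential stability through that theorem.

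The only genuine obstacle is the analytic bookkeeping around the infinite sum: one must argue that the series defining $P(t)$ converges (this is exactly where the transition‑matrix estimate \eqref{eq:transMatrixExpo} enters, producing a dominating geometric series) and that $X\mapsto A(t)\T X A(t)$ may be moved inside it, which follows because that map is bounded and the series is absolutely convergent. I would remark in passing that boundedness of $A(t)$ is not actually used in the construction — only the estimate on $\|\Phi(k,t)\|$ is — although it is assumed in the statement; uniqueness of $P(t)$ is not claimed here and need not be addressed.
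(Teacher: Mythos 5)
Your proposal is correct and takes essentially the same route as the paper, which also constructs $P(t)=\sum_{\tau=t}^{\infty}\Phi(\tau,t)\T Q(\tau)\Phi(\tau,t)$, sandwiches it using the exponential bound on $\|\Phi(\tau,t)\|$, and verifies \eqref{eq:lyapEqNonAut} via $\Phi(\tau,t+1)A(t)=\Phi(\tau,t)$. Your added remarks on absolute convergence, interchanging the sum with $X\mapsto A(t)\T XA(t)$, and the non-use of the boundedness of $A(t)$ are correct refinements of details the paper leaves implicit.
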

\begin{proof}
Let 
\[
	P(t)=\sum_{\tau=t}^\infty \Phi(\tau,t)\T Q(\tau)\Phi(\tau,t)
\]
Therefore we have
\[
	V(t,x)=x\T P(t)x = \sum_{\tau=t}^\infty x\T \Phi(\tau,t)\T Q(\tau)\Phi(\tau,t)x\leq q_2 \sum_{\tau}\|\Phi(\tau,t) x\|^2
\]
Using theorem \ref{theo:conditionTransitionMatrixAsymp}, we have
\[
	V(t,x)\leq q_2\|x\|^2\sum_{\tau=t}^\infty k^2 e^{-2\lambda(\tau-t)}=\frac{q_2 k^2}{1-e^{-2\lambda}}\|x\|^2\leq p_1\|x\|^2
\]
On the other hand
\[
	V(t,x)\geq q_1 \sum_{\tau=t}^\infty x\T \Phi(\tau,t)\T \Phi(\tau,t)x\geq q_1\|x\|^2
\]
considering only the first element of the summation. So we have
\[
	 q_1\|x\|^2\leq  V(t,x)\leq p_1\|x\|^2 \Rightarrow q_1 I\leq P(t)\leq p_1 I
\]
and so $P(t)$ is positive definite and bounded.\\
Now let us check whether \eqref{eq:lyapEqNonAut} is satisfied, so let us evaluate
\begin{align*}
	A(t)\T P(t+1)A(t)-P(t)& = A(t)\T \sum_{\tau=t+1}^\infty \left[\Phi(\tau,t+1)\T Q(\tau)\Phi(\tau,t+1)\right]A(t)-\\
			&-\sum_{\tau=t}^\infty \Phi(\tau,t)\T Q(\tau)\Phi(\tau,t)
\end{align*}
Now since $\Phi(\tau,t+1)A(t)=\Phi(\tau,t)$, it holds
\[
	\sum_{\tau=t+1}^\infty \Phi(\tau,t)\T Q(\tau)\Phi(\tau,t)-\sum_{\tau=t}^\infty \Phi(\tau,t)\T Q(\tau)\Phi(\tau,t)=-\Phi(t,t)\T Q(t)\Phi(t,t)=-Q(t)
\]
so \eqref{eq:lyapEqNonAut} is satisfied. From the latter we have
\[
	V(t+1,A(t)x)-V(t,x)=-x\T Q(t)x\leq -q_1\|x\|^2
\]
and so $V(t,x)$ satisfies all the assumptions of Theorem \ref{theo: nonAutExponStab}
\end{proof}
Now, using the Lyapunov function for the linear system we will prove some linearisation results. Consider again the general nonlinear nonautonomous system
\[
	x(t+1)=f(t,x)
\]
where $f\colon \Tset\times D\to\R^n$ is locally Lipschitz in $x$ on $\Tset\times D$, and $D=\{x\in \R^n\ | \|x\|<r\}$. Suppose that $f(t,0)=0,\ \forall t\in \Tset$, that is $x=0$ is an equilibrium point for the system. Moreover suppose that the Jacobian matrix $[\partial f/\partial x]$ is bounded and Lipschitz on $D$, from which it follows, for all $i=1,\dots,n$
\[
	\left\Vert \frac{\partial f_i}{\partial x}(t,x_1)-\frac{\partial f_i}{\partial x}(t,x_2)\right\Vert_2 \leq L_1\|x_1-x_2\|_2,\ \forall x_1,x_2\in D,\ \forall t\in\Tset
\]
By the mean value theorem, there exists a $z_i\in D$ on the line segment between the origin and $x\in D$ such that
\[
	f_i(t,x)=f_i(t,0)+\frac{\partial f_i}{\partial x}(t,z_i)x
\]
Since $f(t,0)=0$, $f_i(t,x)$ can be rewritten as
\[
	f_i(t,x)= \frac{\partial f_i}{\partial x}(t,0)x+\underbrace{\left[\frac{\partial f_i}{\partial x}(t,z_1)-\frac{\partial f_i}{\partial x}(t,0)\right]x}_{g_i(t,x)}
\]
Defining $A(t) = \frac{\partial f}{\partial x}(t,0),\ f(t,x)$ can be rewritten as
\begin{equation}\label{eq:nonautSystemLin+nonlin}
	f(t,x)=A(t)x+g(t,x)
\end{equation}
The nonlinear part is bounded in norm, since
\begin{align}
\|g(t,x)\|_2 & \leq \left(\sum_{i=1}^n \left\Vert \frac{\partial f_i}{\partial x}(t,z_1)-\frac{\partial f_i}{\partial x}(t,0)\right\Vert_2^2\right)^{1/2} \|x\|_2\leq\nonumber\\
&\leq  \left(\sum_{i=1}^n L_1^2\underbrace{\|z_i\|^2}_{\leq \|x\|^2}\right)^{1/2} \|x\|_2\leq \underbrace{\sqrt{n}L_1}_{L} \|x\|_2^2 \label{eq:boundgnonaut}
\end{align}
This implies that in a neighbourhood of the origin we can approximate the nonlinear function $f(t,x)$ with its linearisation $A(t)x$. We can therefore apply the Lyapunov function found for the linearised system to the starting nonlinear system.

\begin{theorem}[If lin system is exp stable than the nonlin is exp stabl]\label{theo:linSystExpStableImpliesnonlinExpStableNonaut}
Let $x=0$ be an equilibrium point for the nonlinear system
\[
	x(t+1)=f(t,x)
\]
where $f\colon \Tset\times D\to\R^n$ is locally Lipschitz in $x$ on $\Tset\times D$, and $D=\{x\in \R^n\ | \|x\|<r\}$. Suppose that the Jacobian matrix $[\frac{\partial f}{\partial x}]$ is bounded and Lipschitz on $D$, uniformly in $t$. Let
\[
	A(t) = \left.\frac{\partial f}{\partial x}(t,x)\right\vert_{x=0}
\]
Then the origin is an exponentially stable equilibrium point for the nonlinear system if it is an exponentially stable equilibrium point for the linear system $x(t+1)=A(t)x(t)$.
\end{theorem}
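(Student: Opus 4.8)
The plan is to reuse the quadratic Lyapunov function of the linearised system. Since $x=0$ is an exponentially stable equilibrium of $x(t+1)=A(t)x(t)$ and $A(t)$ is bounded (the Jacobian is bounded on $D$ uniformly in $t$), Theorem~\ref{theo:expStabLinearHasLyapFunct} provides, for a fixed bounded positive definite symmetric $Q(t)$ with $q_1 I\le Q(t)\le q_2 I$, a bounded positive definite symmetric $P(t)$ with $p_1 I\le P(t)\le p_2 I$ solving the time-varying Lyapunov equation \eqref{eq:lyapEqNonAut}, i.e. $A(t)\T P(t+1)A(t)-P(t)=-Q(t)$. I take $V(t,x)=x\T P(t)x$, so that $p_1\|x\|^2\le V(t,x)\le p_2\|x\|^2$ for all $t$, which already delivers the upper bound required by Theorem~\ref{theo: nonAutExponStab}.

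Next I compute the increment of $V$ along the \emph{nonlinear} dynamics using the decomposition \eqref{eq:nonautSystemLin+nonlin}, $f(t,x)=A(t)x+g(t,x)$, together with the bound $\|g(t,x)\|\le L\|x\|^2$ from \eqref{eq:boundgnonaut}. Expanding and using \eqref{eq:lyapEqNonAut} on the quadratic part,
\begin{align*}
V(t+1,f(t,x))-V(t,x) &= \bigl(A(t)x+g(t,x)\bigr)\T P(t+1)\bigl(A(t)x+g(t,x)\bigr)-x\T P(t)x\\
&= -x\T Q(t)x + 2\,g(t,x)\T P(t+1)A(t)x + g(t,x)\T P(t+1)g(t,x).
\end{align*}
Bounding with $\|A(t)\|\le a$ (Jacobian bound), $\|P(t+1)\|\le p_2$ and $\|g(t,x)\|\le L\|x\|^2$ gives
\[
V(t+1,f(t,x))-V(t,x)\le -q_1\|x\|^2 + 2ap_2L\,\|x\|^3 + p_2L^2\,\|x\|^4 .
\]

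Finally I restrict to a small ball: choose $\rho>0$ with $B_\rho\subset D$ so small that $2ap_2L\,\rho+p_2L^2\,\rho^2\le q_1/2$; then for all $\|x\|<\rho$ and all $t\ge 0$ one has $V(t+1,f(t,x))-V(t,x)\le -(q_1/2)\|x\|^2$. Hence on $B_\rho$ the function $V(t,x)$ meets the hypotheses of Theorem~\ref{theo: nonAutExponStab} with constants $a=p_2$ and $b=q_1/2$, and that theorem yields exponential stability of the origin for the nonlinear system.

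The only delicate points are keeping all estimates uniform in $t$ and confined to a genuine neighbourhood of the origin. Uniformity in $t$ is inherited from the boundedness (uniform in $t$) of the Jacobian, hence of $A(t)$, and of $P(t)$; the higher-order terms $\|x\|^3$ and $\|x\|^4$ are absorbed simply by shrinking the domain, exactly as in the autonomous linearisation argument. Note that the result is only local (no global claim), since the quadratic-domination estimate holds only on $B_\rho$.
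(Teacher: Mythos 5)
Your proof is correct and follows essentially the same route as the paper: the quadratic Lyapunov function $V(t,x)=x\T P(t)x$ from Theorem~\ref{theo:expStabLinearHasLyapFunct}, the decomposition $f(t,x)=A(t)x+g(t,x)$ with $\|g(t,x)\|\leq L\|x\|^2$, and absorption of the cubic and quartic terms by shrinking to a ball $B_\rho$ before invoking Theorem~\ref{theo: nonAutExponStab}. If anything you are slightly more careful than the paper, which writes $p_1$ where the correct bound on $\|P(t+1)\|$ is $p_2$ and leaves the final appeal to Theorem~\ref{theo: nonAutExponStab} implicit.
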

\begin{proof}
From the assumptions we have that $\|A(t)\|\leq B_A$. Due to theorem \ref{theo:expStabLinearHasLyapFunct}, given bounded and positive definite matrices $Q(t),\ t\in\Tset$ there exist bounded and positive definite matrices matrices $P(t)$ such that $V(t,x)= x\T P(t)x$ is a Lyapunov function for the linearised system system. The matrices $P(t)$ and $Q(t)$ satisfy the following inequalities
\[
	0<p_1 I\leq P\leq p_2,\quad 0<q_1 I\leq Q\leq q_2 I
\]
Let us use function $V(x,t)$ for the nonlinear system. To prove that it is a Lyapunov function also for the nonlinear system we have to check whether the absolute difference  $V(t+1,f(t,x))-V(t,x)$ is negative definite. Using the rewriting \eqref{eq:nonautSystemLin+nonlin} for function $f$, we have the following
{\small
\begin{align*}
	V(&t+1,f(t,x))-V(t,x) = (x\T A(t)\T +g(t,x)\T) P(t+1) (A(t)x+g(t,x))-x\T P(t) x=\\
	&=x\T(A(t)\T P(t+1)A(t)-P(t))x+2g(x,t)\T P(t+1)A(t) x+g(t,x)\T P(t+1)g(t,x)	=\\
	&=x\T(-Q(t))x+2g(x,t)\T P(t+1)A(t) x+g(t,x)\T P(t+1)g(t,x)\leq\\
	&\leq -q_1\|x\|_2^2+2p_1LB_A\|x\|_2^3+p_1L^2\|x\|_2^4= (-q_1+2p_1LB_A\|x\|_2+p_1L^2\|x\|_2^2)\|x\|_2^2
\end{align*}
}
For the latter to be negative definite, the term $-q_1+2p_1LB_A\|x\|_2+p_1L^2\|x\|_2^2$ has to be negative. As in the autonomous case, this is a parabola directed upward and with the vertex in the third quarter, so there exists $\bar{\delta}>0$ such that as long as $\|x\|=\bar{\delta}$ then $V(t+1,f(t,x))-V(t,x)=0$. Choosing $\delta<\bar{\delta},\ \delta<r$, and defining the set $B_\delta=\{x|\|x\|\leq \delta\}$, if $x\in B_\delta$ then $V(t,x)$ is a Lyapunov function for the nonlinear function. 
\end{proof}

\begin{corollary}If the assumptions of Theorem \ref{theo:linSystExpStableImpliesnonlinExpStableNonaut} are satisfied, there exists a Lyapunov function $V(t,x)$ for the nonlinear system defined in $\Tset\times B_\delta$ that satisfies the following inequalities
\[
	\begin{array}{c}
	c_1\|x\|^2\leq V(t,x) \leq c_2 \|x\|^2\\
	V(t+1,f(t,x))-V(t,x)\leq -c_3 \|x\|^2\\
	|V(t,x)-V(t,y)|\leq c_4 \|x-y\|(\|x\|+\|y\|)
	\end{array}
\]
for all $x,y\in B_\delta$ and for some positive constants $c_1,c_2,c_3$ and $c_4$.
\end{corollary}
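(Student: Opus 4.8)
The plan is to avoid building a new function from scratch and instead recycle the quadratic Lyapunov function that already appears inside the proof of Theorem \ref{theo:linSystExpStableImpliesnonlinExpStableNonaut}. Since the linear system $x(t+1)=A(t)x(t)$ is exponentially stable and $A(t)$ is bounded, Theorem \ref{theo:expStabLinearHasLyapFunct} provides, for a fixed bounded positive definite symmetric $Q(t)$ with $0<q_1 I\le Q(t)\le q_2 I$, a bounded positive definite symmetric $P(t)$ with $0<p_1 I\le P(t)\le p_2 I$ solving \eqref{eq:lyapEqNonAut}. I would then take $V(t,x)=x\T P(t)x$ on $\Tset\times B_\delta$, with $\delta$ the very radius fixed in Theorem \ref{theo:linSystExpStableImpliesnonlinExpStableNonaut}.

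First I would record the two bounds that come essentially for free. The sandwich $p_1\|x\|^2\le V(t,x)\le p_2\|x\|^2$ is immediate from $p_1 I\le P(t)\le p_2 I$, so $c_1=p_1$ and $c_2=p_2$. For the decrease condition, the computation in the proof of Theorem \ref{theo:linSystExpStableImpliesnonlinExpStableNonaut}, using the splitting $f(t,x)=A(t)x+g(t,x)$ with $\|g(t,x)\|\le L\|x\|^2$ and $\|A(t)\|\le B_A$, already yields
\[
V(t+1,f(t,x))-V(t,x)\le\bigl(-q_1+2p_1LB_A\|x\|+p_1L^2\|x\|^2\bigr)\|x\|^2 .
\]
The parabola in $\|x\|$ inside the parentheses is increasing on $[0,\delta]$, and $\delta$ was chosen precisely so that it is negative there; hence one may put $c_3=q_1-2p_1LB_A\delta-p_1L^2\delta^2>0$.

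The only genuinely new estimate is the third inequality, and it is a one-liner. Using symmetry of $P(t)$,
\[
V(t,x)-V(t,y)=x\T P(t)x-y\T P(t)y=(x-y)\T P(t)(x+y),
\]
so by Cauchy--Schwarz and $\|P(t)\|\le p_2$ (uniformly in $t$, since $A(t)$ is bounded),
\[
|V(t,x)-V(t,y)|\le p_2\,\|x-y\|\,\|x+y\|\le p_2\,\|x-y\|\bigl(\|x\|+\|y\|\bigr),
\]
so $c_4=p_2$ works. Alternatively one could invoke Theorem \ref{theo:expStamImpliesLyapNonaut} verbatim — by Theorem \ref{theo:linSystExpStableImpliesnonlinExpStableNonaut} the origin of the nonlinear system is exponentially stable on $B_\delta$ and $f$ is Lipschitz in $x$ there because its Jacobian is bounded — which delivers the three inequalities with the same shape; but the explicit quadratic $V$ gives cleaner constants and keeps the corollary self-contained.

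I do not expect a real obstacle; the only thing to watch is the bookkeeping of constants on a common domain. The bounds giving $c_1,c_2,c_4$ hold for all $x$ and all $t$ (the bound $\|P(t)\|\le p_2$ is uniform in $t$), while $c_3$ forces $\|x\|$ small; since $B_\delta$ is convex, the mean-value decomposition $f(t,x)=A(t)x+g(t,x)$ used above is valid on it, so all three estimates coexist on $\Tset\times B_\delta$ with the single radius $\delta$ inherited from Theorem \ref{theo:linSystExpStableImpliesnonlinExpStableNonaut}.
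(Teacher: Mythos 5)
Your proof is correct, but it takes a genuinely different route from the paper. The paper's proof is a one-line appeal to the converse Lyapunov theorem: Theorem \ref{theo:linSystExpStableImpliesnonlinExpStableNonaut} makes the origin of the nonlinear system exponentially stable on $B_\delta$, so Theorem \ref{theo:expStamImpliesLyapNonaut} applies and hands back a $V$ (built as a finite sum of squared trajectory norms) with exactly the three stated inequalities. You instead verify the three inequalities directly for the explicit quadratic $V(t,x)=x\T P(t)x$ already constructed via Theorem \ref{theo:expStabLinearHasLyapFunct} inside the proof of Theorem \ref{theo:linSystExpStableImpliesnonlinExpStableNonaut}; the only new ingredient is the identity $x\T P(t)x-y\T P(t)y=(x-y)\T P(t)(x+y)$ together with $\|P(t)\|\le p_2$, which gives the third inequality with $c_4=p_2$, and your monotonicity observation on the parabola correctly turns the strict negativity at $\|x\|=\delta$ into a uniform $-c_3\|x\|^2$ bound on all of $B_\delta$. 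Your version buys explicit constants in terms of $p_1,p_2,q_1,L,B_A,\delta$ and is guaranteed on the full ball $B_\delta$ (the converse-theorem route, read literally, only yields $V$ on the possibly smaller ball $B_c$ with $c<r/k$ required by Theorem \ref{theo:expStamImpliesLyapNonaut}); the paper's version is shorter and reuses general machinery. One cosmetic point: in the bound you quote from the proof of Theorem \ref{theo:linSystExpStableImpliesnonlinExpStableNonaut}, the cross and quadratic terms should carry $p_2$ (the upper bound on $P(t+1)$) rather than $p_1$ --- that is a typo inherited from the paper and it only changes the formula for $c_3$, not the conclusion.
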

\begin{proof}
Due to the assumptions of the Theorem \ref{theo:linSystExpStableImpliesnonlinExpStableNonaut}, the nonlinear system is exponentially stable for $x(t_0)\in B_\delta$, so the assumptions of Theorem \ref{theo:expStamImpliesLyapNonaut} are satisfied. 
\end{proof}

\section{Inverse Lyapunov Theorems}

{\color{black}
\begin{proposition}[finite time convergence]\label{thm:inv_lyap_finite_conv}
Let us consider the following dynamical systems:
\begin{equation}\label{eq:fast2} 
y(k+1)=\varphi(k,y(k),x), \ \ y(\bar{k})=y_{\bar k}, \ \ k\geq \bar{k}, \ \ \bar{k}\in\mathbb N
\end{equation}
where $\varphi(\cdot)$ is \textbf{continuously differentiable} in $x$ and $y$ and \textbf{ globally uniformly Liptschitz in $y$}, i.e.
$$ \|\varphi(k,y_1,x)-\varphi(k,y_2,x)\|\leq L_1 \|y_1-y_2\|$$
Let us assume that there exists a \textbf{continuously differentiable} function $y^*(x)$ such that
$$ y^*(x)=\varphi(k,y^*(x),x) \ \ \forall k, x$$ 
Let us define the following operator:
$$ \varphi'(k,y',x):= \varphi(k,y'+y^*(x),x)-y^*(x) $$
and assume that \textbf{globally} satisfies
\begin{equation}\label{eq:diff_bounded} 
\left\|\frac{\partial \varphi'(k,y',x)}{\partial x}\right\| \leq L_2 \|y'\| 
\end{equation}
Consider now the dynamical system
\begin{equation}\label{eq:fast22} 
y'(k+1)=\varphi'(k,y'(k),x), \ \ y'(\bar{k})=y'_{\bar k}, \ \ k\geq \bar{k}, \ \ \bar{k}\in\mathbb N
\end{equation}
Let moreover assume that there exist $T\geq 1$ independent of $\bar{k},y'_{\bar k}, x$ such that
$${y}'(\overline k+T)=0, \,\, {y}'(\bar{k})=y_{\bar k}-y^*(x)   , \ \ \forall (\bar{k},y_{\bar k}, x)$$
where $y'(k):= y(k)-y^*(x)$
Then there exists a function $W(k,y,x)$ and positive constants $a_1,a_2,a_3,a_4$ such that the following properties hold {\color{black} globally}:
\begin{eqnarray}
&&a_1 \|y'\|^2 \leq W(k, y', x)  \leq a_2 \|y'\|^2; \label{1}\\
&& W(k+1, \varphi(k,y'+y^*(x), x)-y^*(x),x) -  W(k, {y}', x)  \leq -a_3 \|{y}'\|^2 \ \ \ \ \ \ \  \label{2}\\
&& | W(k, {y}'_1, x) -  W(k, {y}'_2, x)| \leq a_4 \|{y}'_1 - {y}'_2\| \left(\|{y}'_1\| + \|{y}'_2\|\right)  \label{3}\\
&&| W(k, {y}', x_1) -  W(k, {y}', x_2)| \leq a_5 \|{y}'\|^2  \|x_1-x_2\| \label{4}
\end{eqnarray}
\end{proposition}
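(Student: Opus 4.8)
The plan is to imitate the converse-Lyapunov constructions of Theorems~\ref{theo:expStabImpliesPresenceofLyapAuto} and~\ref{theo:expStamImpliesLyapNonaut}, but to exploit the finite-time hypothesis so that the candidate is a \emph{finite} sum rather than an infinite series. Denote by $\psi(j,\bar k;\eta,x)$ the forward solution of~\eqref{eq:fast22} with $y'(\bar k)=\eta$. Since $y^*(x)$ is a fixed point of $\varphi$ we have $\varphi'(k,0,x)=0$, so $0$ is an equilibrium of~\eqref{eq:fast22} and the finite-time hypothesis forces $\psi(j,\bar k;\eta,x)=0$ for every $j\geq\bar k+T$; moreover $\varphi'$ inherits the global $L_1$-Lipschitz bound in $y'$. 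Define
\[
  W(k,\eta,x):=\sum_{j=k}^{k+T-1}\bigl\|\psi(j,k;\eta,x)\bigr\|^2 .
\]
Two elementary estimates are used throughout: $\|\varphi'(k,\eta,x)\|\leq L_1\|\eta\|$ (from $\varphi'(k,0,x)=0$ and Lipschitzness), hence by induction $\|\psi(j,k;\eta,x)\|\leq L_1^{\,j-k}\|\eta\|$; and likewise $\|\psi(j,k;\eta_1,x)-\psi(j,k;\eta_2,x)\|\leq L_1^{\,j-k}\|\eta_1-\eta_2\|$.

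Properties~\eqref{1}--\eqref{3} then follow quickly. For~\eqref{1}, the $j=k$ summand already gives $W\geq\|\eta\|^2$ (so $a_1=1$), while summing the first elementary estimate gives $W\leq\bigl(\sum_{i=0}^{T-1}L_1^{2i}\bigr)\|\eta\|^2=:a_2\|\eta\|^2$. For~\eqref{2}, note $\varphi(k,y'+y^*(x),x)-y^*(x)=\varphi'(k,y',x)=\psi(k+1,k;y',x)$, so the cocycle identity $\psi(j,k+1;\psi(k+1,k;y',x),x)=\psi(j,k;y',x)$ gives $W(k+1,\varphi'(k,y',x),x)=\sum_{j=k+1}^{k+T}\|\psi(j,k;y',x)\|^2$; subtracting $W(k,y',x)$ the sum telescopes to $\|\psi(k+T,k;y',x)\|^2-\|y'\|^2=-\|y'\|^2$, so~\eqref{2} holds with $a_3=1$. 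For~\eqref{3}, write each summand of $W(k,y'_1,x)-W(k,y'_2,x)$ as $(p-q)\T(p+q)$ with $p=\psi(j,k;y'_1,x)$, $q=\psi(j,k;y'_2,x)$, apply Cauchy--Schwarz together with the two elementary estimates, and sum over $j$; this yields~\eqref{3} with $a_4=\sum_{i=0}^{T-1}L_1^{2i}$.

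The genuinely new step is~\eqref{4}. Fix $y'$, write $\psi_m(j):=\psi(j,k;y',x_m)$, so $\psi_1(k)=\psi_2(k)=y'$ and $\psi_m(j+1)=\varphi'(j,\psi_m(j),x_m)$, and split
\[
  \psi_1(j+1)-\psi_2(j+1)=\bigl[\varphi'(j,\psi_1(j),x_1)-\varphi'(j,\psi_2(j),x_1)\bigr]+\bigl[\varphi'(j,\psi_2(j),x_1)-\varphi'(j,\psi_2(j),x_2)\bigr].
\]
The first bracket is bounded by $L_1\|\psi_1(j)-\psi_2(j)\|$; the second by the mean value inequality in $x$ --- this is where continuous differentiability of $\varphi'$ in $x$ enters --- combined with~\eqref{eq:diff_bounded}, which gives $L_2\|\psi_2(j)\|\,\|x_1-x_2\|\leq L_2L_1^{\,j-k}\|y'\|\,\|x_1-x_2\|$. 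Writing $e_j:=\|\psi_1(j)-\psi_2(j)\|$, this is a discrete Gr\"onwall recursion $e_{j+1}\leq L_1e_j+L_2L_1^{\,j-k}\|y'\|\,\|x_1-x_2\|$ with $e_k=0$, whose solution is $e_{k+m}\leq mL_1^{\,m-1}L_2\|y'\|\,\|x_1-x_2\|$. Finally, bounding each summand of $W(k,y',x_1)-W(k,y',x_2)$ by $\|\psi_1(j)-\psi_2(j)\|\bigl(\|\psi_1(j)\|+\|\psi_2(j)\|\bigr)\leq 2e_jL_1^{\,j-k}\|y'\|$ and summing over the $T$ indices delivers~\eqref{4} with $a_5$ an explicit constant depending only on $L_1,L_2,T$. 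I expect the only real obstacle to be keeping the constants honest here: the bound on $e_j$ must stay \emph{linear} in $\|y'\|$ for the right-hand side of~\eqref{4} to be proportional to $\|y'\|^2$, and this linearity is available precisely because~\eqref{eq:diff_bounded} carries the extra factor $\|y'\|$ on $\partial\varphi'/\partial x$; without that structural assumption~\eqref{4} would fail.
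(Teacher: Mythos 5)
Your proposal is correct and follows essentially the same route as the paper: the same candidate $W(k,y',x)=\sum_{j=k}^{k+T-1}\|\psi(j,k;y',x)\|^2$, the same telescoping argument exploiting $\psi(k+T,k;y',x)=0$ for~\eqref{2}, the same elementary Lipschitz/induction estimates for~\eqref{1} and~\eqref{3}, and the same splitting plus mean-value bound via~\eqref{eq:diff_bounded} and a discrete Gr\"onwall recursion for~\eqref{4}. Your handling of the inhomogeneous term $L_2L_1^{\,j-k}\|y'\|\,\|x_1-x_2\|$ in that recursion is in fact slightly more careful than the paper's, but the argument and the resulting constants are the same in substance.
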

\begin{proof}
{\color{black}
The proof follows similarly to Theorem~5.8 by defining 
$$ W(k,  y', x) = \sum_{t=k}^{T-1+k} \|\psi(t; k, y',x)\|^2$$
where $\psi$ is the difference between the solution of \eqref{eq:fast2} starting from initial conditions $\bar k=k, y_{\bar k}= y'+y^*(x)$, and $y^*(x)$, i.e.
$$ \psi(t+1; k, y',x) = \varphi'(t,\psi(t; k, y',x),x), \ \ \ \psi(k; k, y',x)=y' $$
We start by proving some preliminary properties on the operator $\varphi'(k,y',x)$. The first is 
\begin{eqnarray}
\|\varphi'(k,y',x)\|&=& \|\varphi(k, y'+y^*(x),x)- y^*(x)\| \nonumber \\
&=& \|\varphi(k, y'+y^*(x),x)- \varphi(k, y^*(x),x)\| \leq L_1 \|y'\| \label{33}
\end{eqnarray}
and the second is 
\begin{eqnarray}
\|\varphi'(k,y'_1,x)-\varphi'(k,y'_2,x)\|&=& \|\varphi(k, y_1'+y^*(x),x)- \varphi(k, y_2'+y^*(x),x)\| \nonumber \\
&\leq& L_1 \|y_1'-y_2'\|
\end{eqnarray}
where we used the Liptschitz property of the operator $\varphi()$.

The operator $ \varphi'(t,y,x)$ is continuously differentiable in $x$ since it is the composition of continuously differentiable map, namely $\varphi(t,y,x)$ is continuously differentiable in both $y$ and $x$ and $y^*(x)$ is continuously differentiable in $x$.  The last property that we will use is
 \begin{eqnarray}
&&\|\varphi'(k,y',x_1)-\varphi'(k,y',x_2)\| =\\
&&=\left\| \left( \int_{0}^1 \frac{\partial \varphi'(t; k, y' ,x_1 +\eta (x_2-x_1))}{\partial x} d\eta \right)(x_1-x_2)\right\| \nonumber \\
&&\leq  \left( \int_{0}^1\left\| \frac{\partial \varphi'(t; k, y' ,x_1 +\eta (x_2-x_1))}{\partial x}\right\| d\eta \right)\|x_1-x_2\|\nonumber \\
&&\leq \int_{0}^1  L_2 \|y'\|d\eta  \|x_1-x_2\| = L_2 \|y'\|  \|x_1-x_2\| 
\end{eqnarray} 
where we used the mean value theorem for vector-valued functions.
 
Clearly
$$ W(k,  y', x) \geq \|\psi(k; k, y',x)\|^2= \| y'\|^2= a_1\| y'\|^2, \ \ a_1 =1  $$ 

Since by \eqref{33} we have that
$$ \|\psi(t+1; k, y',x)\| =\| \varphi'(t,\psi(t; k, y',x),x)\| \leq L_1 \|\psi(t; k, y',x)\| $$
this implies by induction that
$$ \|\psi(t; k, y',x)\| \leq L_1^{t-k}\|y'\| $$
therefore 
$$ W(k,  y', x) \leq \sum_{t=k}^{T-1+k} L_1^{2(t-k)}\|y'\|^2=a_2 \|y'\|^2, \ \ a_2=\sum_{t=0}^{T-1} L_1^{2k}$$
As for the second inequality, it is easy to see that
\begin{eqnarray*} &&W(k+1, \varphi(k,y'+y^*(k,x), x)-y^*(k+1,x),x) -  W(k, {y}', x) \\
&&=  W(k+1,\psi(k+1;k,y',x),x)-  W(k, {y}', x)\\
&=& \|\psi(k+T; k, y',x)\|^2 -  \|\psi(k; k, y',x)\|^2 = -\|y'\|^2 = -a_3 \|y'\|^2, \ \ \ a_3 = 1 
\end{eqnarray*}
since by assumption $\psi(k+T; k, y',x)=0$
As for the third inequality first note that
$$ |\|z_1\|^2-\|z_2\|^2|= |(z_1+z_2)^T(z_1-z_2)|\leq \|z_1+z_2\|\|z_1-z_2\|\leq (\|z_1\|+\|z_2\|)\|z_1-z_2\| $$
Moreover,  
\begin{eqnarray*} 
\|\psi(t+1; k, y_1',x)-\psi(t+1; k, y_2',x)\|&=&  \| \varphi'(t,\psi(t; k, y_1',x),x)- \varphi'(t,\psi(t; k, y_2',x),x)   \|   \nonumber  \\
 &\leq& L_1\| \psi(t; k, y_1',x)- \psi(t; k, y_2',x)\|
\end{eqnarray*}
therefore by induction
$$ \| \psi(t; k, y_1',x)- \psi(t; k, y_2',x)\| \leq L_1^{t-k}\|\psi(k; k, y_1',x)- \psi(k; k, y_2',x)\| \leq L_1^{t-k}\|y_1'-y_2'\| $$
Putting all together we obtain:
$$| \|\psi(t; k, y_1',x)\|^2- \|\psi(t; k, y_2',x)\|^2 | \leq L_1^{2(t-k)}(\|y_1'\|+\|y_2'\|)\|y_1'-y_2'\| $$
and consequently 
$$  | W(k, {y}'_1, x) -  W(k, {y}'_2, x)| \leq \underbrace{\sum_{t=k}^{T-1+k} L_1^{2(t-k)}}_{a_4} \|{y}'_1 - {y}'_2\| \left(\|{y}'_1\| + \|{y}'_2\|\right) $$
We finally have:
\begin{eqnarray*} 
&&\|\psi(t+1; k, y',x_1)-\psi(t+1; k, y',x_2)\|\\
&&=  \| \varphi'(t,\psi(t; k, y',x_1),x_1)- \varphi'(t,\psi(t; k, y',x_2),x_2)   \|   \nonumber  \\
 && =  \| \varphi'(t,\psi(t; k, y',x_1),x_1)-\varphi'(t,\psi(t; k, y',x_2),x_1)+\\ 
 &&+\varphi'(t,\psi(t; k, y',x_2),x_1)- \varphi'(t,\psi(t; k, y',x_2),x_2)   \|   \nonumber  \\
 && \leq L_1 \|\psi(t; k, y',x_1)- \psi(t; k, y',x_2)\|+L_2 \| \psi(t; k, y',x_2)\|\|x_1-x_2\|\\
 && \leq L_1 \|\psi(t; k, y',x_1)- \psi(t; k, y',x_2)\|+ L_2 L_1 \|y'\|\|x_1-x_2\|
\end{eqnarray*}
which implies by induction that
$$ \|\psi(k; k, y',x_1)-\psi(k; k, y',x_2)\|=\|y'-y'\|=0 $$
$$ \|\psi(t; k, y',x_1)-\psi(t; k, y',x_2)\| \leq L_2 \sum_{k'=1}^{t-k}L_1^{k'} \|y'\|\|x_1-x_2\|, t\geq k+1 $$
Finally
$$| \|\psi(k; k, y',x_1)\|^2- \|\psi(k; k, y',x_2)\|^2 | =| \|y'\|^2-\|y'\|^2|=0$$
$$| \|\psi(t; k, y',x_1)\|^2- \|\psi(t; k, y',x_2)\|^2 | \leq 2L_2L_1^{(t-k)}\sum_{k'=1}^{t-k}L_1^{k'} \|y'\|^2 \|x_1'-x_2'\|, \ \ t\geq k+1 $$
and consequently 
$$  | W(k, {y}', x_1) -  W(k, {y}', x_2)| \leq \underbrace{\sum_{t'=1}^{T-1} 2L_2L_1^{t'}\left(\sum_{k'=1}^{t'}L_1^{k'}\right)}_{a_5} \|y'\|^2\|x_1 - x_2\| $$

}
\end{proof}

\begin{lemma}[local result]
Let us consider the same assumptions of the previous Theorem~\ref{thm:inv_lyap_finite_conv} except for global condition~\eqref{eq:diff_bounded} is replaced with $\frac{\partial \phi'(k,y',x)}{\partial x}$ being {\bf continuously differentiable} in $y'$. Then conditions~\eqref{1}-\eqref{3} are still globally satisfied, while condition~\eqref{4} is satisfied for any $(x,y')\in \mathcal{D}$ where $\mathcal{D}\subset \mathbb{R}^{n+m}$ is a compact set that contain the origin, i.e. $(0,0)\in\mathcal{D}$ and the constant $a_5$ is possibly a function of the set $\mathcal{D}$.
\end{lemma}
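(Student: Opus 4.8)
The plan is to reuse the construction from Proposition~\ref{thm:inv_lyap_finite_conv} verbatim, namely $W(k,y',x)=\sum_{t=k}^{T-1+k}\|\psi(t;k,y',x)\|^2$ with $\psi$ the trajectory of \eqref{eq:fast22}, and to pinpoint exactly where the global hypothesis \eqref{eq:diff_bounded} was used. First I would observe that the proofs of \eqref{1}, \eqref{2}, \eqref{3} never invoke \eqref{eq:diff_bounded}: \eqref{1} only needs $\|\psi(t;k,y',x)\|\le L_1^{t-k}\|y'\|$, which follows from the global uniform Lipschitz property of $\varphi$ in $y$; \eqref{2} is the telescoping identity together with the finite-time hypothesis $\psi(k+T;k,y',x)=0$; and \eqref{3} uses only $\|\psi(t;k,y_1',x)-\psi(t;k,y_2',x)\|\le L_1^{t-k}\|y_1'-y_2'\|$, again a consequence of the Lipschitz property in $y$. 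Since these hypotheses are unchanged, \eqref{1}--\eqref{3} still hold globally with the same constants $a_1,\dots,a_4$.

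Hence the whole matter is \eqref{4}. In the proof of Proposition~\ref{thm:inv_lyap_finite_conv}, \eqref{eq:diff_bounded} entered only through the single estimate $\|\varphi'(k,y',x_1)-\varphi'(k,y',x_2)\|\le L_2\|y'\|\,\|x_1-x_2\|$, which was then propagated along trajectories by induction. The replacement ingredient is that $\varphi'$ vanishes at $y'=0$ in the $x$-direction: since $y^*(x)=\varphi(k,y^*(x),x)$ for all $k,x$, we have $\varphi'(k,0,x)=\varphi(k,y^*(x),x)-y^*(x)\equiv 0$, so $\frac{\partial \varphi'}{\partial x}(k,0,x)=0$ for all $k,x$. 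Now fix a compact set $\mathcal{D}\ni(0,0)$. Because $\|\psi(t;k,y',x)\|\le L_1^{t-k}\|y'\|\le L_1^{T-1}\|y'\|$ for $t=k,\dots,k+T-1$ and $x$ is frozen along a trajectory, every trajectory issued from $(x,y')\in\mathcal{D}$ stays, for its first $T$ steps, inside a fixed compact set $\mathcal{D}'\supseteq\mathcal{D}$ depending only on $\mathcal{D}$, $L_1$ and $T$. Since $\frac{\partial \varphi'}{\partial x}$ is $C^1$ in $y'$ and vanishes at $y'=0$, writing $\frac{\partial \varphi'}{\partial x}(k,y',x)=\int_0^1\frac{\partial^2\varphi'}{\partial y'\,\partial x}(k,sy',x)\,y'\,ds$ gives $\|\frac{\partial\varphi'}{\partial x}(k,y',x)\|\le \tilde L_2\|y'\|$ on $\mathcal{D}'$, where $\tilde L_2=\tilde L_2(\mathcal{D})$ bounds $\|\frac{\partial^2\varphi'}{\partial y'\,\partial x}\|$ on $\mathcal{D}'$; by the mean value theorem for vector-valued functions (exactly as in the proof of Proposition~\ref{thm:inv_lyap_finite_conv}) this yields $\|\varphi'(k,y',x_1)-\varphi'(k,y',x_2)\|\le \tilde L_2\|y'\|\,\|x_1-x_2\|$ for $(x_i,y')\in\mathcal{D}'$.

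With this local substitute for \eqref{eq:diff_bounded} in hand, I would rerun the final induction of the proof of Proposition~\ref{thm:inv_lyap_finite_conv} without change—the trajectories never leave $\mathcal{D}'$, so every intermediate estimate is legitimate—obtaining $\|\psi(t;k,y',x_1)-\psi(t;k,y',x_2)\|\le \tilde L_2\sum_{k'=1}^{t-k}L_1^{k'}\|y'\|\,\|x_1-x_2\|$ for $t\ge k+1$, then the matching bound on $|\,\|\psi(t;k,y',x_1)\|^2-\|\psi(t;k,y',x_2)\|^2\,|$, and finally \eqref{4} with $a_5=\sum_{t'=1}^{T-1}2\tilde L_2 L_1^{t'}\big(\sum_{k'=1}^{t'}L_1^{k'}\big)$, which manifestly depends on $\mathcal{D}$ through $\tilde L_2$.

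The main obstacle is the single point of uniformity in $k$: converting $\frac{\partial\varphi'}{\partial x}(k,0,x)=0$ plus $C^1$-dependence on $y'$ into $\|\frac{\partial\varphi'}{\partial x}(k,y',x)\|\le\tilde L_2\|y'\|$ with one $k$-independent constant on $\mathcal{D}'$ requires $\frac{\partial^2\varphi'}{\partial y'\,\partial x}$ to be bounded on compact sets uniformly in $k$; this is implicit in reading "continuously differentiable in $y'$" in the same uniform-in-$k$ spirit as the paper's other standing assumptions, and once granted the rest is a rerun of the previous argument. One should also record that $(0,0)\in\mathcal{D}$ is used precisely so that $\mathcal{D}'$ contains the origin and the estimate is not vacuous.
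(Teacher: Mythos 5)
Your proof is correct and follows essentially the same route as the paper's: both exploit $\varphi'(k,0,x)\equiv 0$ (hence $\frac{\partial \varphi'}{\partial x}(k,0,x)=0$), apply the mean value theorem in $y'$ to obtain $\left\|\frac{\partial \varphi'}{\partial x}(k,y',x)\right\|\leq \tilde L_2\|y'\|$ with $\tilde L_2$ coming from a bound on the mixed second derivative over a compact set, and then rerun the induction of Proposition~\ref{thm:inv_lyap_finite_conv} to get \eqref{4} with an $a_5$ depending on $\mathcal{D}$. If anything you are slightly more careful than the paper, which only passes to the convex hull of $\mathcal{D}$, whereas you correctly observe that the estimate must hold on a compact set large enough to contain the first $T$ steps of every trajectory issued from $\mathcal{D}$.
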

\begin{proof}
First of all, note that the operator $\varphi(k,y',x)$ is continously differentiable everywhere. Let $\mathcal{D}_c$ the smallest convex compact set such that $\mathcal{D}\subseteq \mathcal{D}_c$. For any  $(x_1,y'),(x_2,y')\in \mathcal{D}$ then the point $( x_1+\eta(x_2-x_1),\lambda y')\in \mathcal{D}_c$ for any $\eta,\lambda\in [0,1]$, being  $\mathcal{D}_c$ convex and including the origin. As a result, 
$$ \|\varphi'(k,y',x_1)-\varphi'(k,y',x_2)\| \leq  \left( \int_{0}^1\left\| \frac{\partial \varphi'(t; k, y' ,x_1 +\eta (x_2-x_1))}{\partial x}\right\| d\eta \right)\|x_1-x_2\|$$ 
is still valid. Since $\frac{\partial^2 \phi'(k,0,x)}{\partial x\partial y'}=0$ and being continuous we can apply the mean value theorem to 
$$\frac{\partial \varphi'(t; k, y' ,x_1 +\eta (x_2-x_1))}{\partial x}=\left( \int_{0}^1 \frac{\partial^2 \varphi'(t; k, \lambda y' ,x_1 +\eta (x_2-x_1))}{\partial x\partial y'} d\lambda \right) y'$$ 
Finally, since $\mathcal{D}_c$ is compact and $\frac{\partial^2 \phi'(k,y',x)}{\partial x\partial y'}$ is continuous, then there exists $L_2$ possibly function of $\mathcal{D}_c$ ( and therefore of $\mathcal{D}$), such that
$$\left\| \frac{\varphi'(t; k, \lambda y' ,x_1 +\eta (x_2-x_1))}{\partial x\partial y'}\right\|\leq L_2 $$
from which the final claim of the lemma follows. 
\end{proof}

\begin{proposition}[exponential convergence]\label{thm:inv_layp_exp}
Let us consider the following dynamical systems:
\begin{equation}\label{eq:fast2} 
y(k+1)=\varphi(k,y(k),x), \ \ y(\bar{k})=y_{\bar k}, \ \ k\geq \bar{k}, \ \ \bar{k}\in\mathbb N
\end{equation}
where $\varphi(\cdot)$ is \textbf{continuously differentiable} in $x$ and $y$ and \textbf{ globally uniformly Liptschitz in $y$}, i.e.
$$ \|\varphi(k,y_1,x)-\varphi(k,y_2,x)\|\leq L_1 \|y_1-y_2\|$$
Let us assume that there exists a \textbf{continuously differentiable} function $y^*(x)$ such that
$$ y^*(x)=\varphi(k,y^*(x),x) \ \ \forall k, x$$ 
Let us define the following operator:
$$ \varphi'(k,y',x):= \varphi(k,y'+y^*(x),x)-y^*(x) $$
and assume that it \textbf{globally} satisfies
$$ \left\|\frac{\partial \varphi'(k,y',x)}{\partial x}\right\| \leq L_2 \|y'\| $$
Consider now the dynamical system
\begin{equation}\label{eq:fast22} 
y'(k+1)=\varphi'(k,y'(k),x), \ \ y'(\bar{k})=y'_{\bar k}, \ \ k\geq \bar{k}, \ \ \bar{k}\in\mathbb N
\end{equation}
Let moreover assume that there exist exist $C,\rho$ independent of $\bar{k},y_{\bar k}, x$ independent of $\bar{k},y'_{\bar k}, x$ such that
$$\|{y}'(k)\| \leq C \rho^{k-\bar{k}}\,\, \| {y}'(\bar{k})\|=\| y_{\bar k}-y^*(x,\bar k)   \|, \ \ \forall (\bar{k},y_{\bar k}, x)$$
where $y'(k):= y(k)-y^*(k,x)$
Then there exists a function $W(k,y,x)$ and positive constants $a_1,a_2,a_3,a_4$ such that the following properties hold {\color{black} globally}:
\begin{eqnarray}
&&a_1 \|y'\|^2 \leq W(k, y', x)  \leq a_2 \|y'\|^2; \label{11}\\
&& W(k+1, \varphi(k,y'+y^*(x), x)-y^*(x),x) -  W(k, {y}', x)  \leq -a_3 \|{y}'\|^2 \ \ \ \ \ \ \  \label{22}\\
&& | W(k, {y}'_1, x) -  W(k, {y}'_2, x)| \leq a_4 \|{y}'_1 - {y}'_2\| \left(\|{y}'_1\| + \|{y}'_2\|\right)  \label{33}\\
&&| W(k, {y}', x_1) -  W(k, {y}', x_2)| \leq a_5 \|{y}'\|^2  \|x_1-x_2\| \label{44}
\end{eqnarray}
\end{proposition}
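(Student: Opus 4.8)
The plan is to mirror the construction used for Theorem~\ref{theo:expStamImpliesLyapNonaut} and for Proposition~\ref{thm:inv_lyap_finite_conv}, the only substantive change being that the sum defining the storage function must be truncated at a \emph{finite} horizon $N$, to be calibrated later. Writing $\psi(t;k,y',x)$ for the solution of \eqref{eq:fast22} with $\psi(k;k,y',x)=y'$, I would set
\[
	W(k,y',x)=\sum_{t=k}^{N-1+k}\|\psi(t;k,y',x)\|^2 .
\]
First I would record the same auxiliary estimates on $\varphi'$ as in Proposition~\ref{thm:inv_lyap_finite_conv}: since $\varphi'(k,0,x)=0$, the uniform Lipschitz property of $\varphi$ gives $\|\varphi'(k,y',x)\|\le L_1\|y'\|$ and $\|\varphi'(k,y_1',x)-\varphi'(k,y_2',x)\|\le L_1\|y_1'-y_2'\|$; and, $\varphi'$ being continuously differentiable in $x$ (a composition of $\mathcal{C}^1$ maps), integrating $\partial\varphi'/\partial x$ along the segment from $x_1$ to $x_2$ by the mean value theorem for vector-valued maps gives $\|\varphi'(k,y',x_1)-\varphi'(k,y',x_2)\|\le L_2\|y'\|\,\|x_1-x_2\|$. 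Propagating these bounds along $\psi$ yields, by induction, $\|\psi(t;k,y',x)\|\le L_1^{t-k}\|y'\|$, and from the hypothesis also $\|\psi(t;k,y',x)\|\le C\rho^{t-k}\|y'\|$, together with $\|\psi(t;k,y_1',x)-\psi(t;k,y_2',x)\|\le L_1^{t-k}\|y_1'-y_2'\|$ and $\|\psi(t;k,y',x_1)-\psi(t;k,y',x_2)\|\le L_2\big(\sum_{j=1}^{t-k}L_1^{j}\big)\|y'\|\,\|x_1-x_2\|$.

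With these estimates the four inequalities follow almost verbatim from the finite-time case. The $t=k$ term of $W$ gives $W\ge\|y'\|^2$, hence $a_1=1$, while the exponential bound gives $W\le\big(\sum_{t=0}^{N-1}C^2\rho^{2t}\big)\|y'\|^2$, hence $a_2=C^2(1-\rho^{2N})/(1-\rho^2)$, proving \eqref{11}. Using $\psi(t;k+1,\varphi'(k,y',x),x)=\psi(t;k,y',x)$ for $t\ge k+1$, the difference in \eqref{22} telescopes to
\[
	W(k+1,\varphi'(k,y',x),x)-W(k,y',x)=\|\psi(k+N;k,y',x)\|^2-\|y'\|^2\le-\big(1-C^2\rho^{2N}\big)\|y'\|^2 .
\]
This is the only point where the argument genuinely departs from Proposition~\ref{thm:inv_lyap_finite_conv}: there $\psi(k+T;k,y',x)=0$ exactly, whereas here one only has $\|\psi(k+N;k,y',x)\|\le C\rho^{N}\|y'\|$, so $N$ must be fixed large enough that $C^2\rho^{2N}<1$ — possible precisely because $\rho<1$ — which then gives $a_3=1-C^2\rho^{2N}>0$. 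Properties \eqref{33} and \eqref{44} are then obtained exactly as in Proposition~\ref{thm:inv_lyap_finite_conv}, applying the elementary inequality $\big|\,\|z_1\|^2-\|z_2\|^2\,\big|\le(\|z_1\|+\|z_2\|)\|z_1-z_2\|$ to each summand and summing the finitely many resulting bounds, yielding $a_4=\sum_{t=0}^{N-1}L_1^{2t}$ and a finite $a_5$ assembled from $L_1$, $L_2$ and $N$.

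The main obstacle is thus the calibration of the horizon $N$: the truncated Lyapunov function only works once $N$ exceeds a threshold of order $\log C/|\log\rho|$, and the crucial observation is that, since $C$ and $\rho$ are assumed independent of $\bar k$, $y'_{\bar k}$ and $x$, so is this threshold — whence $a_1,\dots,a_5$ are genuine global, parameter-free constants and \eqref{11}--\eqref{44} hold globally as claimed. Everything else is a routine adaptation of the computations already carried out for Theorem~\ref{theo:expStamImpliesLyapNonaut} and Proposition~\ref{thm:inv_lyap_finite_conv}.
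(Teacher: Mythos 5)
Your proposal is correct and follows essentially the same route as the paper: truncate the sum defining $W$ at a finite horizon, telescope to reduce \eqref{22} to bounding $\|\psi(k+N;k,y',x)\|^2-\|y'\|^2$, choose the horizon (of order $\log C/|\log\rho|$, independent of $\bar k$, $y'_{\bar k}$, $x$) so that the exponential decay makes this strictly negative, and inherit the remaining constants from the finite-time case. If anything, your bound $\|\psi(k+N)\|^2\leq C^2\rho^{2N}\|y'\|^2$ is stated more carefully than the paper's.
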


{\color{black}

\begin{proof}
{\color{black}
The proof follows similarly to Theorem~\ref{thm:inv_lyap_finite_conv} by defining 
$$ W(k,  y', x) = \sum_{t=k}^{T-1+k} \|\psi(t; k, y',x)\|^2$$
where $T$ is to designed. This will be obtained by first guaranteeing inequality~\eqref{22}:
\begin{eqnarray*} &&W(k+1, \varphi(k,y'+y^*(k,x), x)-y^*(k+1,x),x) -  W(k, {y}', x) \\
&&=  W(k+1,\psi(k+1;k,y',x),x)-  W(k, {y}', x)\\
&=& \|\psi(k+T; k, y',x)\|^2 -  \|\psi(k; k, y',x)\|^2 \leq C\rho^T \|y'\|^2 -\|y'\|^2 
\end{eqnarray*}
If we now pick 
$$ T=T^* :=  \lceil -\frac{\log(2C)}{\log \rho} \rceil $$
where $\lceil \cdot \rceil$ indicates the smallest integer number greater or equal to the argument, then inequality~\eqref{22} is satisfied with $a_3 = \frac{1}{2}$. All the other constants $a_1,a_2,a_4,a_5$ are the same as those in Theorem~\ref{thm:inv_lyap_finite_conv} just by substituting $T$ with $T^*$.
}
\end{proof}

}

%

\section{Averaging}

%
%

\begin{proposition}\label{prop:averaging}
Let us consider the dynamical system:
\begin{equation}\label{eqn:slow}
x(k+1) = x(k)+ \epsilon \phi(k,x(k))
\end{equation}
 and assume that $\phi$ is {\bf globally uniformly Liptschitz}, i.e.
$$ \|\phi(k,x)\|\leq L \|x\| $$
Also assume that the following limit exists 
$$ \overline \phi(x) = \lim_{T \to \infty }\frac{1}{T} \sum_{k'=k}^{k+T} \phi(k,x), \ \ \forall k, x $$
and satisfies the following condition
$$ \left\|  \sum_{k'=k}^{k+T} \phi(k,x) - T  \overline \phi(x)\right\| \leq T \sigma(T) L \|x\| $$
where $\sigma(T)$ is a monotonically function decreasing to zero, i.e. 
$$ \lim_{T\to \infty} \sigma(T)=0$$
Then, there exists a function $g'(k,T,x(k),\epsilon)$ such that
$$x(k+T+1) = x(k)+\epsilon T \overline \phi(x(k)) + g'(k,T,x(k),\epsilon), \ \ T\geq 0, \epsilon \in[0,1] $$
where 
$$ \| g'(k,T,x(k),\epsilon) \| \leq \epsilon T\nu(T,\epsilon) \|x(k)\|$$
$$ \nu(T,\epsilon):=L\sigma(T)+ \epsilon L^2 T (1+L)^T$$

\end{proposition}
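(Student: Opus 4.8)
The plan is to unroll the recursion~\eqref{eqn:slow}, read off an explicit formula for $g'$, and then split it into an ``averaging defect with the state frozen'' and a ``state drift'' term, each of which is controlled by exactly one of the hypotheses. Iterating \eqref{eqn:slow} from time $k$ to time $k+T+1$ gives
\[
	x(k+T+1) = x(k) + \epsilon\sum_{j=k}^{k+T}\phi(j,x(j)),
\]
so (noting that $x(k),\dots,x(k+T)$ are all determined by $x(k)$, $k$, $\epsilon$) the claimed identity is equivalent to
\[
	g'(k,T,x(k),\epsilon) = \epsilon\underbrace{\sum_{j=k}^{k+T}\bigl(\phi(j,x(j)) - \phi(j,x(k))\bigr)}_{=:A} + \epsilon\underbrace{\Bigl(\sum_{j=k}^{k+T}\phi(j,x(k)) - T\,\overline{\phi}(x(k))\Bigr)}_{=:B},
\]
where in the averaging hypothesis I read the summand $\phi(k,x)$ as $\phi(k',x)$.

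The term $B$ is precisely the averaging defect evaluated at the frozen state $x=x(k)$, so the standing bound $\bigl\|\sum_{k'=k}^{k+T}\phi(k',x)-T\overline{\phi}(x)\bigr\|\le T\sigma(T)L\|x\|$ yields at once $\|B\|\le T\sigma(T)L\|x(k)\|$, which will account for the $L\sigma(T)$ part of $\nu$.

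For $A$ I would first run a discrete Gronwall estimate on the trajectory. From \eqref{eqn:slow} and $\|\phi(j,x)\|\le L\|x\|$ we get $\|x(j+1)-x(j)\|=\epsilon\|\phi(j,x(j))\|\le\epsilon L\|x(j)\|$, hence $\|x(j)\|\le(1+\epsilon L)^{j-k}\|x(k)\|\le(1+L)^{T}\|x(k)\|$ for $k\le j\le k+T$ (using $\epsilon\le1$), and, summing the one-step increments, $\|x(j)-x(k)\|\le\epsilon L\sum_{i=k}^{j-1}\|x(i)\|=\bigl((1+\epsilon L)^{j-k}-1\bigr)\|x(k)\|\le\epsilon L\,T(1+L)^{T}\|x(k)\|$, where the last step uses the elementary inequality $(1+a)^{m}-1\le a\,m\,(1+a)^{m}$ for $a\ge0$. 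Since the $j=k$ summand of $A$ vanishes, the Lipschitz property of $\phi(j,\cdot)$ (with constant $L$, as in the earlier propositions) then gives
\[
	\|A\|\le\sum_{j=k+1}^{k+T}L\,\|x(j)-x(k)\|\le T\cdot L\cdot\epsilon L\,T(1+L)^{T}\|x(k)\|=\epsilon L^{2}T^{2}(1+L)^{T}\|x(k)\|.
\]
Combining, $\|g'\|\le\epsilon\|A\|+\epsilon\|B\|\le\epsilon T\bigl(L\sigma(T)+\epsilon L^{2}T(1+L)^{T}\bigr)\|x(k)\|=\epsilon T\,\nu(T,\epsilon)\|x(k)\|$, which is the claim.

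Once the decomposition $g'=\epsilon A+\epsilon B$ is in place the argument is essentially bookkeeping; the only places needing a little care are matching the constants exactly — which relies on the convexity estimate $(1+a)^{m}-1\le a m(1+a)^{m}$ and on the observation that the drift term $A$ has only $T$ (not $T+1$) nonzero summands — and reading the summation range in the averaging hypothesis consistently with the sum produced by unrolling the recursion. I expect this constant-matching to be the only mild obstacle.
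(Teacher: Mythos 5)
Your proposal is correct and follows essentially the same route as the paper: the same decomposition of $g'$ into the drift term $\epsilon\sum_{j}(\phi(j,x(j))-\phi(j,x(k)))$ (the paper's $g$) plus the frozen-state averaging defect, with the same final constants. The only difference is cosmetic — you bound the drift by direct telescoping plus the convexity inequality $(1+a)^m-1\le am(1+a)^m$, whereas the paper sets up and solves the recursive majorant $M_{T+1}=(1+\epsilon L)M_T+\epsilon^2 L^2 T$ — and both routes land on the bound $\epsilon^2L^2T^2(1+L)^T\|x(k)\|$.
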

\begin{proof}
We will prove the previous results by intermediate steps
\begin{itemize}
\item We first show that
$$x(k+T+1) = x(k)+\epsilon \left(\sum_{k'=k}^{k+T} \phi(k',x(k))  \right) + g(k,T,x(k),\epsilon), \ \ T\geq 0 $$
where
$$ \|g(k,T,x(k),\epsilon)\| \leq M_{T} \|x(k)\|, \ \ T\geq 0  $$
$$ M_{T+1}=(1+\epsilon L) M_T + \epsilon^2 L^2 T, \ \ M_0=0 $$
Let start by defining
$$ g(k,T,x(k),\epsilon):= x(k+T+1) - x(k)-\epsilon \left(\sum_{k'=k}^{k+T} \phi(k',x(k))  \right)$$
such function is obviously well defined.
We will prove the previous inequality by induction. For $T=0$ we have $g(k,0,x(k),\epsilon)=0$, therefore $M_0=0$.
Let us assume that the inequality is true for $T$ and let us prove it holds also for $T+1$. Let us first define
$$h(k,T,x(k))=\sum_{k'=k}^{k+T} \phi(k',x(k))$$
Clearly
$$\| h(k,T,x(k))\| \leq T L\|x(k)\|$$
Therefore
\begin{eqnarray*}
g(k,T+1,x(k),\epsilon) &=& x(k+T+2)- x(k)-\epsilon \left(\sum_{k'=k}^{k+T+1} \phi(k',x(k))  \right)\\
&=& x(k+T+1)+ \epsilon \phi(k+T+1,x(k+T+1)) -x(k) \\
&&- \epsilon \phi(k+T+1,x(k)) -\epsilon \left(\sum_{k'=k}^{k+T} \phi(k',x(k))\right)\\
&=& g(k,T,x(k),\epsilon)+\epsilon \big(\phi(k+T+1,x(k+T+1))-\phi(k+T+1,x(k)) \big) 
\end{eqnarray*}
Also note that by definition
$$ x(k+T+1) = x(k)+ \epsilon h(k,T,x(k))+g(k,T,x(k),\epsilon)$$
Therefore we get:
\begin{eqnarray*}
M_{T+1}&=&\|g(k,T+1,x(k),\epsilon)\| \\
&\leq& \|g(k,T,x(k),\epsilon)\|+\epsilon L \| x(k+T+1)-x(k)\| \\
&\leq & M_T +\epsilon L ( \epsilon \|h(k,T,x(k))\|+\|g(k,T,x(k),\epsilon)\|)\\
&\leq & M_T + \epsilon^2L^2 T \|x(k)\|+\epsilon L M_T \\
&=& (1+\epsilon L) M_T+\epsilon^2 T L^2 \|x(k)\|
\end{eqnarray*}
%
\item Note that the previous sequential inequality which is a simple linear system with a time-varying input can be used to show that
$$ M_T \leq  \epsilon^2 L^2 \|x(k)\| \sum_{k'=0}^{T} (1+\epsilon L)^{T-k'}k'  $$
By definition we can immediately observe that
$$g'(k,T,x(k),\epsilon) = g(k,T,x(k),\epsilon) + \epsilon \left(\sum_{k'=k}^{k+T} \phi(k',x(k))  - T \overline \phi(x(k)) \right)$$
which implies that
 $$ \|g'(k,T,x(k),\epsilon)\| \leq \|g(k,T,x(k),\epsilon)\|+\epsilon L T \sigma(T) \|x(k)\| $$
where we used the property of the averaged operator $\overline \phi$. Consequently
$$ \|g'(k,T,x(k),\epsilon)\| \leq \left( \epsilon T L\sigma(T) + \epsilon^2 L^2 \sum_{k'=0}^{T} (1+\epsilon L)^{T-k'}k' \right)\|x(k)\| $$
\item Now note that for $\epsilon \in [0,1]$ we have
$$ \sum_{k'=0}^{T} (1+\epsilon L)^{T-k'}k' \leq \sum_{k'=0}^{T} (1+L)^{T-k'}k'\leq \sum_{k'=1}^{T} (1+L)^{T-k'}T\leq \sum_{k'=1}^{T} (1+L)^{T}T = T^2(1+L)^T$$
therefore 
$$ \|g'(k,T,x(k),\epsilon)\| \leq \epsilon T \left( L\sigma(T)+\epsilon L^2 T (1+L)^T \right)\|x(k)\| $$
Clearly by defining
$$ \nu(T,\epsilon):=L\sigma(T)+ \epsilon L^2 T (1+L)^T$$
we obtain the claim.

\end{itemize}
\end{proof}

\begin{proposition}\label{prop:bound}
Let us consider the following function
$$ \mu(T,\epsilon):= \nu(T,\epsilon)+\epsilon T (L+\nu(T,\epsilon) )^2$$ where the function $\nu(T,\epsilon)$ has been defined in the previous theorem. Then, for any $\delta>0$, there exist $\epsilon_\delta\in(0,1]$ and $T_\delta\in\mathbb N$ such that 
$$\mu(T_\delta,\epsilon)\leq \delta, \ \ \ \forall \epsilon\in [0,\epsilon_\delta] $$
\end{proposition}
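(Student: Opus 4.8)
The plan is to pick the two parameters \emph{in the right order} — first $T_\delta$, then $\epsilon_\delta$ — exploiting that $\sigma(T)\to 0$ and that, for each fixed $T\in\mathbb N$, the map $\epsilon\mapsto \mu(T,\epsilon)$ is nondecreasing on $[0,1]$ with $\mu(T,0)=L\sigma(T)$. Indeed, $\nu(T,\epsilon)=L\sigma(T)+\epsilon L^2 T(1+L)^T$ is affine and nondecreasing in $\epsilon$, hence $L+\nu(T,\epsilon)\ge 0$ is nondecreasing, hence so is $(L+\nu(T,\epsilon))^2$, and therefore $\epsilon T(L+\nu(T,\epsilon))^2$ (a product of nonnegative nondecreasing functions of $\epsilon$) and $\mu(T,\epsilon)=\nu(T,\epsilon)+\epsilon T(L+\nu(T,\epsilon))^2$ are nondecreasing in $\epsilon$. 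Consequently it will suffice to produce $T_\delta$ and a single value $\epsilon_\delta\in(0,1]$ with $\mu(T_\delta,\epsilon_\delta)\le\delta$; the inequality for all $\epsilon\in[0,\epsilon_\delta]$ then follows by monotonicity.

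\textbf{Step 1 (choose $T_\delta$).} Since $\sigma(T)\downarrow 0$, fix $T_\delta\in\mathbb N$ large enough that $L\,\sigma(T_\delta)\le \delta/4$.

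\textbf{Step 2 (control $\nu$ by shrinking $\epsilon$).} With $T_\delta$ now frozen, $\nu(T_\delta,\epsilon)=L\sigma(T_\delta)+\epsilon\,L^2T_\delta(1+L)^{T_\delta}\le \delta/4+\epsilon\,L^2T_\delta(1+L)^{T_\delta}$. Set $\epsilon_1:=1$ if $L=0$ (so the added term vanishes), and otherwise $\epsilon_1:=\min\bigl\{1,\ \tfrac{\delta/4}{L^2T_\delta(1+L)^{T_\delta}}\bigr\}$. Then $\nu(T_\delta,\epsilon)\le\delta/2$ for every $\epsilon\in[0,\epsilon_1]$.

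\textbf{Step 3 (control the remaining term).} On $[0,\epsilon_1]$ we now have
\[
\mu(T_\delta,\epsilon)=\nu(T_\delta,\epsilon)+\epsilon\,T_\delta\bigl(L+\nu(T_\delta,\epsilon)\bigr)^2\ \le\ \tfrac{\delta}{2}+\epsilon\,T_\delta\bigl(L+\tfrac{\delta}{2}\bigr)^2 .
\]
Choosing $\epsilon_\delta:=\min\bigl\{\epsilon_1,\ \tfrac{\delta/2}{T_\delta(L+\delta/2)^2}\bigr\}\in(0,1]$ makes the last term at most $\delta/2$, whence $\mu(T_\delta,\epsilon_\delta)\le\delta$, and by the monotonicity established above, $\mu(T_\delta,\epsilon)\le\delta$ for all $\epsilon\in[0,\epsilon_\delta]$, proving the claim.

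\textbf{Expected difficulty.} There is no real obstacle here; the statement is essentially a bookkeeping exercise in quantifier order. The only points to keep an eye on are (i) freezing $T=T_\delta$ \emph{before} shrinking $\epsilon$, since $\epsilon$ is multiplied by factors such as $T(1+L)^T$ that blow up with $T$, and (ii) the degenerate case $L=0$, where several $\epsilon$-restrictions become vacuous and $\mu\equiv 0$ anyway.
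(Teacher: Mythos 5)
Your proposal is correct and follows essentially the same route as the paper's proof: freeze $T_\delta$ via the decay of $\sigma$, then split the bound into two $\delta/2$ budgets by shrinking $\epsilon$ twice and taking the minimum. Your explicit monotonicity argument in $\epsilon$, the cap $\epsilon_1\le 1$, and the treatment of the degenerate case $L=0$ are small refinements the paper leaves implicit, but the substance is the same.
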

\begin{proof}
We will prove the previous claim by construction. Let
$$ T_\delta := \min\{ T\in\mathbb N \, | \, \sigma'(T)\leq \frac{\delta}{4}\} $$
and
$$ \epsilon_1:= \frac{\delta}{4L^2T_\delta(1+L)^{T_\delta}}$$
This suffices to claim that
$$\nu(T_\delta,\epsilon) \leq \nu(T_\delta,\epsilon_1) \leq  \frac{\delta}{2}, \ \ \epsilon \in[0,\epsilon_1]$$
Let now
$$ \epsilon_2:= \frac{\delta}{2T_\delta(L+\nu(T_\delta,\epsilon_1)^2)}$$
This suffices to claim that
$$\epsilon T_\delta (L+\nu(T_\delta,\epsilon) )^2 \leq \epsilon_2 T_\delta (L+\nu(T_\delta,\epsilon_1) )^2 \leq  \frac{\delta}{2}, \ \ \epsilon \in[0,\epsilon_2]$$
If we now define
$$ \epsilon_\delta:=\min\{\epsilon_1,\epsilon_2\} $$
we obtain the claim. 
\end{proof}

\begin{proposition}\label{prop:Lyap}
Let us consider the dynamical system and assumptions specified in Proposition~\ref{prop:averaging}. Let us further assume that there exits a {\bf twice differentiable} function such that the following assumptions are {\bf  globally and uniformly} satisfied:
\[
	\begin{array}{c}
	c_1\|x\|^2\leq V(z) \leq c_2 \|x\|^2\\
	\frac{\partial V}{\partial x} \bar \phi(x) \leq -c_3 \|x\|^2\\
	\|\frac{\partial V}{\partial z}\|\leq c_4 \|x\|
	\end{array}
\]
Then there exist a function $V'(k,x)$ and $\epsilon_c>0$ such that the following properties are globally and uniformly satisfied for all $\epsilon\in (0,\epsilon_c)$
\[
	\begin{array}{c}
	a_1\|x\|^2\leq V'(k,x) \leq a_2 \|x\|^2\\
	V'(k+1,x+ \epsilon \phi(k,x))-V'(k,x) \leq -\epsilon a_3 \|x\|^2\\
	|V'(k,x)-V'(k,x')| \leq a_4\|x-x'\|  (\|x\|+\|x'\|)
	\end{array}
\]
\end{proposition}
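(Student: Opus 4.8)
The plan is to build $V'$ as a finite sum of $V$ evaluated along the flow of the slow system~\eqref{eqn:slow}, in the spirit of the finite-horizon constructions of Theorem~\ref{theo:expStabImpliesPresenceofLyapAuto} and Proposition~\ref{thm:inv_lyap_finite_conv}, with the horizon length fixed through Propositions~\ref{prop:averaging} and~\ref{prop:bound}. Write $\Psi(t;k,\xi)$ for the solution of~\eqref{eqn:slow} at time $t\ge k$ with $\Psi(k;k,\xi)=\xi$, so that $\Psi(k+1;k,\xi)=\xi+\epsilon\phi(k,\xi)$, and for an integer $T$ to be chosen set
\[
V'(k,x):=\sum_{j=0}^{T} V\big(\Psi(k+j;k,x)\big).
\]

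First I would record the elementary a priori estimates coming from $\|\phi(k,x)\|\le L\|x\|$ and the Lipschitz property of $\phi$: for $\epsilon\in[0,1]$, $\|\Psi(k+j;k,x)\|\le (1+L)^{j}\|x\|$ and $\|\Psi(k+j;k,x)-\Psi(k+j;k,x')\|\le (1+L)^{j}\|x-x'\|$, both by induction on $j$. Combining the first with $c_1\|x\|^2\le V(x)\le c_2\|x\|^2$, the term $j=0$ already gives $V'(k,x)\ge c_1\|x\|^2$ and summing the upper bounds gives $V'(k,x)\le c_2\big(\sum_{j=0}^{T}(1+L)^{2j}\big)\|x\|^2$, so the first pair of inequalities holds with $a_1=c_1$ and $a_2=c_2\sum_{j=0}^{T}(1+L)^{2j}$. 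For the third inequality I would use $\|\partial V/\partial x\|\le c_4\|x\|$ to get $|V(a)-V(b)|\le \tfrac32 c_4\|a-b\|(\|a\|+\|b\|)$ (integrate $\partial V/\partial x$ along the segment $[b,a]$ and bound its norm there), and apply this termwise together with the two a priori estimates above to obtain $|V'(k,x)-V'(k,x')|\le \tfrac32 c_4\big(\sum_{j=0}^{T}(1+L)^{2j}\big)\|x-x'\|(\|x\|+\|x'\|)$.

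The core is the one-step difference. Since $\Psi\big(\,\cdot\,;k+1,\Psi(k+1;k,x)\big)=\Psi(\,\cdot\,;k,x)$ by uniqueness of forward solutions, the sum defining $V'$ telescopes:
\[
V'\big(k+1,x+\epsilon\phi(k,x)\big)-V'(k,x)=V\big(\Psi(k+T+1;k,x)\big)-V(x).
\]
Now I would invoke Proposition~\ref{prop:averaging} with horizon $T$: $\Psi(k+T+1;k,x)=x+\epsilon T\,\bar\phi(x)+g'$ with $\|g'\|\le \epsilon T\,\nu(T,\epsilon)\|x\|$, and expand $V$ to second order about $x$,
\[
V\big(\Psi(k+T+1;k,x)\big)-V(x)=\frac{\partial V}{\partial x}(x)\big(\epsilon T\,\bar\phi(x)+g'\big)+R,\qquad |R|\le \tfrac{c_5}{2}\big\|\epsilon T\,\bar\phi(x)+g'\big\|^2,
\]
where $c_5$ is the global bound on the Hessian of $V$ furnished by the hypothesis that $V$ is twice differentiable globally and uniformly (equivalently, $\partial V/\partial x$ is globally Lipschitz with constant $c_5$). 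Using $\frac{\partial V}{\partial x}(x)\bar\phi(x)\le -c_3\|x\|^2$, $\|\partial V/\partial x\|\le c_4\|x\|$ and $\|\bar\phi(x)\|\le L\|x\|$ (which follows from $\|\phi(k,x)\|\le L\|x\|$ by passing to the limit in the averaging definition), this reduces to
\[
V\big(\Psi(k+T+1;k,x)\big)-V(x)\le \epsilon T\Big(-c_3+c_4\,\nu(T,\epsilon)+\tfrac{c_5}{2}\,\epsilon T\,(L+\nu(T,\epsilon))^2\Big)\|x\|^2\le \epsilon T\big(-c_3+C\,\mu(T,\epsilon)\big)\|x\|^2,
\]
with $C:=\max\{c_4,c_5/2\}$ and $\mu$ exactly the function of Proposition~\ref{prop:bound}.

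It then remains only to pick the constants. Choose $\delta>0$ with $C\delta\le c_3/2$, and let $T:=T_\delta$ and $\epsilon_c:=\epsilon_\delta$ be as provided by Proposition~\ref{prop:bound}, so that $C\,\mu(T_\delta,\epsilon)\le c_3/2$ for every $\epsilon\in(0,\epsilon_c)$; then the one-step difference is bounded by $-\epsilon\,(T_\delta c_3/2)\|x\|^2$, giving the second inequality with $a_3:=T_\delta c_3/2$, while $a_1,a_2,a_4$ are the constants displayed above evaluated at $T=T_\delta$. The only nontrivial point — everything else is bookkeeping — is controlling the quadratic Taylor remainder $R$: it must be made $o(\epsilon T)\|x\|^2$ uniformly in $k$ and $x$, which forces the horizon $T$ to be long (to drive $\nu$ down through $\sigma(T)$) and $\epsilon$ to be small (to drive $\epsilon T(L+\nu)^2$ down), and the whole purpose of Proposition~\ref{prop:bound} is to certify that these two competing requirements can be met simultaneously with a single finite horizon valid on an entire interval $(0,\epsilon_c)$ of step sizes.
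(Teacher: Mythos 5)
Your proposal is correct and follows essentially the same route as the paper: the same finite-horizon sum $V'(k,x)=\sum_{j=0}^{T}V(\Psi(k+j;k,x))$, the same telescoping of the one-step difference to $V(\Psi(k+T+1;k,x))-V(x)$, the same second-order Taylor expansion combined with Proposition~\ref{prop:averaging}, and the same selection of $T_\delta$ and $\epsilon_\delta$ via Proposition~\ref{prop:bound}. The only (harmless) differences are bookkeeping: you introduce a separate constant $c_5$ for the Hessian bound where the paper reuses $c_4$, and your growth estimate $(1+L)^{2j}$ in the upper bound is actually the more careful version of the paper's.
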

\begin{proof}
We will prove the claim by explicitly costructing $V'(k,x)$ from $V(x)$. Let us define 
$$ V'(k,x):= \sum_{k'=k}^{k+T} V(x(k';k,x)) $$
where $x(k';k,x)$ is the solution of the dynamical system defined above with initial condition $x(k)=x$ at time instant $k$, and $T=T^*$ is a parameter to be determined. We will prove each inequality separately:

\begin{enumerate}
\item Clearly
$$ V'(k,x) = \sum_{k'=k}^{k+T} V(x(k';k,x)) \geq V(x(k;k,x))=V(x)\geq c_1 \|x\|^2 $$
therefore the lower bound is verified for $a_1=c_1$.
Now we observe that
$$\|x(k+1)\|=\|x(k)+ \epsilon \phi(k,x(k))\| \leq (1+\epsilon L)\|x(k)\| \Longrightarrow \|x(k';k,x)\| \leq (1+\epsilon L)^{(k'-k)}\|x\| $$
Also we have
$$ V'(k,x) \leq  \sum_{k'=k}^{k+T} c_2 \|x(k';k,x)\|^2\leq  c_2\left(\sum_{k'=k}^{k+T} (1+\epsilon_c L)^{(k'-k)} \right)\|x\|^2= \underbrace{c_2\left(\sum_{k=0}^{T} (1+\epsilon_c L)^{k} \right)}_{a_2} \|x\|^2 $$
where $a_2$ is independent of $\epsilon$ as long as $\epsilon\in(0,\epsilon_c)$.
\item First note that by definition
\begin{eqnarray*} 
V'(k+1,x+ \epsilon \phi(k,x))-V'(k,x) &=& V(x(k+T+1;k,x)-V(x(k;k,x))\\
&=&V(x(k+T+1;k,x)-V(x) \\
\end{eqnarray*}
and
$$ x(k+T+1;k,x) = x+\epsilon T \overline \phi(x) + g'(k,T,x,\epsilon)$$
Now by using Taylor expansion for $V(x(k+T+1;k,x)$ about the point $x$ and the mean value theorem we obtain
\begin{eqnarray*}  
&&V(x(k+T+1;k,x))-V(x) = V(x)+ \left.\frac{\partial V}{\partial x}\right|_{x}( \epsilon T \overline \phi(x) + g'(k,T,x,\epsilon)) \\
&& + ( \epsilon T \overline \phi(x) + g'(k,T,x,\epsilon))^T\nabla^2 V|_{x'}( \epsilon T \overline \phi(x) + g'(k,T,x,\epsilon)) - V(x)\\
&& \leq -\epsilon T c_3 \|x\|^2 +  c_4 \|g'(k,T,x,\epsilon))\| \|x\| +c_4(\epsilon TL\|x\| +\|g'(k,T,x,\epsilon))\|)^2 \\
&&\leq  -\epsilon T c_3 \|x\|^2 +  c_4 \epsilon T \nu(T,\epsilon) \|x\|^2 + c_4(\epsilon TL +\epsilon T \nu(T,\epsilon))^2\|x\|^2\\
&&\leq -\epsilon T  \left(c_3- c_4(\nu(T,\epsilon)+\epsilon T (L+\nu(T,\epsilon) )^2)\right)\|x\|^2\\
&&= -\epsilon T  \left(c_3- c_4\mu(T,\epsilon)\right)\|x\|^2
\end{eqnarray*}
where $x'=x+\eta(\epsilon T \overline \phi(x) + g'(k,T,x,\epsilon))$ for some $\eta\in[0,1]$. 
If we now let $\delta=\frac{c_3}{2c_4}$ and define $T^*=T_\delta$ and $\epsilon_c=\epsilon_\delta$ as in Proposition~\ref{prop:bound} we have that 
$$ 
V(x(k+T+1;k,x)-V(x) \leq -\epsilon T^* \frac{c_3}{2}\|x\|^2, \ \  \epsilon\in(0,\epsilon_c)$$
therefore the claim is satisfied with $a_3=T^* \frac{c_3}{2}$.

\item The last inequality can be proved by showing that
$$ |V(x(k';k,x))-V(x(k';k,x'))| \leq a'_4\|x-x'\|  (\|x\|+\|x'\|), k\leq k' \leq k+T $$
since from this follows that
$$|V'(k,x)-V'(k,x')| \leq \underbrace{(T+1)a'_4}_{a_4}\|x-x'\|  (\|x\|+\|x'\|)$$

This can be obtained by observing that
$$ V(x(k';k,x))-V(x(k';k,x'))=  \left.\frac{\partial V}{\partial x}\right|_{z}(   x(k';k,x)- x(k';k,x')) $$
where $z=\eta x(k';k,x)+(1-\eta) x(k';k,x')$ for some $\eta\in[0,1]$ by the mean value theorem. From this it follows that by using the properties of the function $V(x)$ that
\begin{eqnarray*}
&&|V(x(k';k,x))-V(x(k';k,x'))| \leq c_4 \| z \| \| x(k';k,x)- x(k';k,x')\| \\
&&\leq c_4  \| x(k';k,x)- x(k';k,x')\|( \| x(k';k,x)\|+ \|x(k';k,x')\|  )
\end{eqnarray*}
The final claim can then be obtained by induction by exploiting the Liptschitz property of the  operator $\phi(k,x)$ necessary to compute the evolutions of $x(k';k,x)$ with respect to the initial conditions similarly to the proof of Theorem~\ref{thm:inv_lyap_finite_conv}.
\end{enumerate}

\end{proof}

\section{Separation of time scales}

In this section we provide the main result.

\begin{proposition}[Semi-global exponential stability]
Consider the system 
\begin{equation}\label{eq:MainSystem}
\left\{
\begin{array}{l}
x(k+1)=x(k)+\epsilon \phi (k, x(k), y(k)), \ \ x(0)=x_0, y(0)=y_0\\
y(k+1)=\varphi (k,y(k),x(k))
\end{array}
\right.
\end{equation}
and assume that is well defined for any $x\in\mathbb{R}^n,y \in \mathbb{R}^n,k\in \mathbb{N}$.
Also assume that the following assumption are satisfied for all $x\in\mathbb{R}^n,y \in \mathbb{R}^n,k\in \mathbb{N}$:
\begin{enumerate} 
\item the map $\varphi (k,y,x)$ satisfy all conditions of Theorem~\ref{thm:inv_layp_exp} and in particular there is function $y^*(x)$ such that
$$y^*(x)= \varphi (k,y^*(x),x), \ \ \forall x,k$$
\item the map $\phi$ is {\bf locally uniformly Liptshitz} and $\phi(k,0,y^*(0))=0$ for all $k$
\item The function
$$\phi_1(k,x) := \phi(k,x,y^*(x)) $$ satisfies all assumptions of Proposition~\ref{prop:averaging} and in particular the function $\phi_1(k,x)$ admits the following average
$$ \bar{\phi}(x) = \lim_{T\to\infty}\frac{1}{T}\sum_{k'=k}^{k+T}\phi_1(k',x), \ \ \forall k$$
\item there exists a twice differentiable function $V(x)$ such that
\[
	\begin{array}{c}
	c_1\|z\|^2\leq V(z) \leq c_2 \|z\|^2\\
	\frac{\partial V}{\partial z} \bar \phi(z) \leq -c_3 \|z\|^2\\
	\|\frac{\partial V}{\partial z}\|\leq c_4 \|z\|
	\end{array}
\]
\end{enumerate}
Then for each $r>0$, there exist $\epsilon_r,C_r,\gamma_r$ possibly function of $r$, such that for all $\|y_0-y^*(0)\|^2+\|x_0\|^2<r^2$ and $\epsilon \in(0,\epsilon_r)$, we have
$$\|x(k)\|^2\leq C_r (1-\epsilon\gamma_r)^k$$ 
\end{proposition}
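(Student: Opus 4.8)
The plan is to pass to the error coordinate $y'(k):=y(k)-y^*(x(k))$ and combine the converse Lyapunov functions produced by Propositions~\ref{thm:inv_layp_exp} and~\ref{prop:Lyap} into a single composite function for the coupled system, then close a comparison argument on a forward-invariant compact set. Using $\varphi(k,y,x)=\varphi'(k,y-y^*(x),x)+y^*(x)$ and writing $\phi_1(k,x):=\phi(k,x,y^*(x))$, the pair $(x,y')$ evolves as
\[
x(k+1)=x(k)+\epsilon\phi_1(k,x(k))+\epsilon e_x(k),\qquad y'(k+1)=\varphi'(k,y'(k),x(k))+e_y(k),
\]
with $e_x(k)=\phi(k,x(k),y'(k)+y^*(x(k)))-\phi(k,x(k),y^*(x(k)))$ and $e_y(k)=y^*(x(k))-y^*\big(x(k)+\epsilon\phi(k,x(k),y(k))\big)$. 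On any ball $\mathcal B_R=\{\|x\|^2+\|y'\|^2\le R^2\}$, the local uniform Lipschitzness of $\phi$ together with $\phi(k,0,y^*(0))=0$ and the continuous differentiability of $y^*$ give constants $L_\phi(R),C_e(R)$ with $\|e_x(k)\|\le L_\phi(R)\|y'(k)\|$ and $\|e_y(k)\|\le\epsilon C_e(R)(\|x(k)\|+\|y'(k)\|)$; thus $y'$ follows its nominal fast recursion up to an $O(\epsilon)$ error, and $x$ follows the averaged-ready slow recursion $x\mapsto x+\epsilon\phi_1$ up to an error of size $\epsilon L_\phi(R)\|y'\|$.

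By Assumption~1, $\varphi$ meets the hypotheses of Proposition~\ref{thm:inv_layp_exp}, which yields a function $W(k,y',x)$ satisfying the four properties \eqref{11}--\eqref{44}: a quadratic sandwich, a genuine one-step fast decrease $W(k+1,\varphi'(k,y',x),x)-W(k,y',x)\le-a_3\|y'\|^2$, a Lipschitz bound in $y'$ (weight $\|y'_1\|+\|y'_2\|$), and a Lipschitz bound in $x$ (weight $\|y'\|^2$). By Assumptions~3--4, Proposition~\ref{prop:Lyap} applied with $\phi=\phi_1$ yields, for every $\epsilon$ below its threshold $\epsilon_c$, a function $V'(k,x)$ with $b_1\|x\|^2\le V'(k,x)\le b_2\|x\|^2$, $V'(k+1,x+\epsilon\phi_1(k,x))-V'(k,x)\le-\epsilon b_3\|x\|^2$, and $|V'(k,x)-V'(k,x')|\le b_4\|x-x'\|(\|x\|+\|x'\|)$. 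I set $U(k,x,y'):=V'(k,x)+\mu W(k,y',x)$, with $\mu>0$ to be fixed, so that $c_U(\|x\|^2+\|y'\|^2)\le U\le C_U(\|x\|^2+\|y'\|^2)$ with $c_U=\min(b_1,\mu a_1)$, $C_U=\max(b_2,\mu a_2)$.

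Now fix $R$ and estimate $\Delta U=U(k+1,\cdot)-U(k,\cdot)$ on $\mathcal B_R$. Splitting $\Delta V'=\big[V'(k+1,x+\epsilon\phi_1(k,x))-V'(k,x)\big]+\big[V'(k+1,x(k+1))-V'(k+1,x+\epsilon\phi_1(k,x))\big]$, the first bracket is $\le-\epsilon b_3\|x\|^2$ by Proposition~\ref{prop:Lyap} and the second is controlled by the quadratic Lipschitz bound on $V'$ together with the displacement $x(k+1)-(x(k)+\epsilon\phi_1(k,x(k)))=\epsilon e_x(k)$ of norm $\le\epsilon L_\phi\|y'\|$, which after Young's inequality and for $\epsilon\le1$ gives $\Delta V'\le-\tfrac{\epsilon b_3}{2}\|x\|^2+\epsilon K_1(R)\|y'\|^2$. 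For $\Delta W$ I decompose into the nominal fast decrease $W(k+1,\varphi'(k,y'(k),x(k)),x(k))-W(k,y'(k),x(k))\le-a_3\|y'\|^2$, the change of the first argument from $\varphi'(k,y'(k),x(k))$ to $y'(k+1)$ controlled by the $y'$-Lipschitz bound and $\|e_y\|$ (hence $O(\epsilon)$), and the change of the last argument from $x(k)$ to $x(k+1)$ controlled by the $x$-Lipschitz bound as $a_5\|y'(k+1)\|^2\,\|x(k+1)-x(k)\|$. The last contribution is cubic, of order $\epsilon\|y'\|^2(\|x\|+\|y'\|)$, and is absorbed only because $\|x\|+\|y'\|\le\sqrt2\,R$ on $\mathcal B_R$; collecting everything gives $\Delta W\le-a_3\|y'\|^2+\epsilon K_2(R)(\|x\|^2+\|y'\|^2)$.

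Combining, $\Delta U\le-\big(\tfrac{\epsilon b_3}{2}-\mu\epsilon K_2\big)\|x\|^2-\big(\mu a_3-\epsilon K_1-\mu\epsilon K_2\big)\|y'\|^2$. Choosing $\mu=\mu_r:=b_3/(4K_2)$ makes the $\|x\|^2$-coefficient at most $-\tfrac{\epsilon b_3}{4}$, and then choosing $\epsilon_r$ small (and $<\epsilon_c$) so that $\epsilon(K_1+\mu_rK_2)\le\tfrac12\mu_r a_3$ makes the $\|y'\|^2$-coefficient at most $-\tfrac12\mu_r a_3$; hence $\Delta U\le-\epsilon\gamma'(\|x\|^2+\|y'\|^2)\le-\epsilon(\gamma'/C_U)\,U$ for all $\epsilon\in(0,\epsilon_r)$, so $U(k)\le(1-\epsilon\gamma_r)^kU(0)$ with $\gamma_r:=\gamma'/C_U$, and $\|x(k)\|^2\le U(k)/c_U\le(C_U/c_U)(\|x_0\|^2+\|y'_0\|^2)(1-\epsilon\gamma_r)^k$, which on bounding $\|y'_0\|\le\|y_0-y^*(0)\|+L_{y^*}(R)\|x_0\|$ becomes $\|x(k)\|^2\le C_r(1-\epsilon\gamma_r)^k$. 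The remaining --- and, I expect, genuinely delicate --- point is that every estimate above holds only while the trajectory stays in $\mathcal B_R$, so $R$ must be chosen consistently: given $r$, the hypothesis $\|y_0-y^*(0)\|^2+\|x_0\|^2<r^2$ forces $\|x_0\|^2+\|y'_0\|^2<r_1^2$ for some $r_1=r_1(r)$, hence $U(0)\le C_U r_1^2=:\rho$ and $\{U\le\rho\}\subseteq\{\|x\|^2+\|y'\|^2\le\rho/c_U\}$, so it suffices to take $R$ with $R^2\ge\rho/c_U=(C_U/c_U)r_1^2$; but $C_U/c_U$ depends on $\mu_r$ and hence on $K_2(R)$, making this a fixed-point requirement on $R$, solvable for each fixed $r$ precisely when $K_2(R),L_\phi(R),C_e(R)$ grow slowly enough in $R$ --- this is where the merely \emph{local} Lipschitzness of $\phi$ bites and why only semi-global stability is obtained. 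With such an $R$ fixed, $\Delta U\le0$ on $\mathcal B_R\supseteq\{U\le\rho\}$ makes $\{U\le\rho\}$ forward invariant, so the trajectory remains in $\mathcal B_R$ for all $k$ and the argument closes.
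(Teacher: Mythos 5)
Your proposal is correct and follows essentially the same route as the paper: pass to the error coordinate $y'=y-y^*(x)$, invoke the two converse Lyapunov results (Proposition~\ref{thm:inv_layp_exp} for $W$ and Proposition~\ref{prop:Lyap} for $V'$), form a composite function, bound $\Delta V'$ and $\Delta W$ by the same three-way decompositions, and close the argument on a forward-invariant sublevel set.

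The one genuine difference is how the cross terms are absorbed, and it has a consequence you flagged yourself. You take $U=V'+\mu W$ with an $R$-dependent weight $\mu_r=b_3/(4K_2(R))$ and use Young's inequality; since $K_2(R)$ grows with $R$ (the cubic term $\|y'\|^2\|x^+-x\|$ is only controlled by the radius of the working ball), your sandwich constants $c_U,C_U$ then depend on $R$, which is exactly what produces the fixed-point requirement $R^2\geq (C_U/c_U)\,r_1^2$ that you describe as ``genuinely delicate.'' The paper avoids this entirely: it takes $\mu=1$, so $\alpha=\min\{a_1,b_1\}$ and $\beta=\max\{a_2,b_2\}$ come from the \emph{global} quadratic bounds of the two converse Lyapunov propositions and are independent of $r$; the invariant set is then simply $\Omega_r\subset\mathcal{B}_{r_0}$ with $r_0=r\beta/\alpha$ fixed a priori, and the $r_0$-dependent Lipschitz constants enter only into $\epsilon_r$ and $\gamma_r$ (which the statement allows to depend on $r$). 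Negativity of $\Delta U$ with $\mu=1$ is obtained not by weighting but by writing $\Delta U\leq [\|x\|\ \|y'\|]\,Q_U\,[\|x\|\ \|y'\|]^T$ and checking the principal minors of $Q_U$: the $(2,2)$ entry has the $\epsilon$-independent leading term $-b_3<0$ while the $(1,1)$ entry and the off-diagonal are $O(\epsilon)$, so the determinant is $\epsilon v_{A_1}w_{C_1}+o(\epsilon)>0$ for small $\epsilon$. Your Young-inequality version can be repaired the same way without the weight $\mu$ (use $\|x\|\|y'\|\leq\tfrac{\delta}{2}\|x\|^2+\tfrac{1}{2\delta}\|y'\|^2$ with $\delta$ small and fixed, dumping the large factor onto the $\|y'\|^2$ slot where the $O(1)$ margin $-a_3$ lives), which dissolves the circularity; as written, your argument still closes because $K_2(R)$ grows only linearly in $R$, but this is an unnecessary complication rather than an intrinsic feature of the problem.
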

\begin{proof}
Let $y'(k)$ be defined as
$$
y'(k):=y(k)-y^*(x(k))
$$

We can write that
\begin{align*}
y'(k+1)&=y(k+1)-y^*(x(k+1))\\
&= \varphi (k,y(k),x(k))- y^*(x(k+1))\\
&= \varphi (k,y(k),x(k))- y^*\left(x(k)+\epsilon \phi (k, x(k), y(k))\right)\\
&= \varphi \left(k,y'(k)+y^*(x(k)),x(k) \right)- y^*\left(x(k)+\epsilon \phi (k; x(k), y'(k)+y^*(x(k)))\right)
\end{align*}
Then the dynamics of the original system can be written in this new coordinate system as: 
\begin{equation}
\left\{
\begin{array}{l}
x(k+1)=x(k)+\epsilon \phi (k; x(k), y'(k)+y^*(x(k)))\\
y'(k+1)= \varphi \left(k,y'(k)+y^*(x(k)),x(k) \right)- y^*\left(x(k)+\epsilon \phi (k; x(k), y'(k)+y^*(x(k)))\right)
\end{array}
\right.
\end{equation}
By assumption~(i), then there exists a Lyapunov function $W(k,y,x)$ such that 
\[
	\begin{array}{c}
	b_1\|y'\|^2\leq W(k,y',x) \leq b_2 \|y'\|^2\\
	W(k+1,\varphi(k,y'+y^*(x),x)-y^*(x),x)-W(k,y',x) \leq -b_3 \|y'\|^2\\
	| W(k, y'_1, x) -  W(k, y'_2, x)| \leq b_4 \|y'_1 - y'_2\| (\|y'_1\| + \|y'_2\|)\\
	| W(k, y', x_1) -  W(k, y', x_2)| \leq b_5 \|y'\|^2 \|x_1-x_2\|
	\end{array}
\]
By assumptions~(iii)-(iv), Proposition~\ref{prop:Lyap} guarantees the existence of a Lyapunov function $V'(k,x)$ and constant $\epsilon_c\in(0,1]$ such that 
\[
	\begin{array}{c}
	a_1\|x\|^2\leq V'(k,x) \leq a_2 \|x\|^2\\
	V'(k+1,x+ \epsilon \phi_1(k,x)-V'(k,x) \leq -\epsilon a_3 \|x\|^2, \  \ \ \ \ \epsilon\in(0,\epsilon_c)\\
	|V'(k,x)-V'(k,x')| \leq a_4\|x-x'\|  (\|x\|+\|x'\|)
	\end{array}
\]

Let us define the extended state vector $z=[x^T\ y'^T]^T$ and consider now the global Lyapunov function:
$$ U(k,x,y')= V'(k,x)+W(k,y',x) $$
which has the property 
$$ \min\{a_1,b_1\} (\|x\|^2+\|y'\|^2) \leq U(k,x,y') \leq  \max\{a_2,b_2\} (\|x\|^2+\|y'\|^2) $$
Now, defining $\alpha := \min\{a_1,b_1\}$ and $\beta:= \max\{a_2,b_2\}$ let us consider the sets
$${\color{black}
\mathcal{B}_r:=\left\{(x,y) \, | \, \|x\|^2+\|y'\|^2\leq r \right\}}$$
$$\Omega_r(k):=\left\{(x,y) \, | \, U(k,x,y) \leq r \beta\right\} $$
$${\color{black}\mathcal{B}_{r_0}:=\left\{(x,y) \, | \, \|x\|^2+\|y'\|^2\leq r \frac{\beta}{\alpha}\right\}=\left\{(x,y) \, | \, \|x\|^2+\|y'\|^2\leq r_0\right\},\ r_0 := r \frac{\beta}{ \alpha}}$$

{\color{red}}
which are closed and compact by continuity. Also note that
$$(x_0,y_0-y^*(0))=(x_0,y'_0)\in \mathcal{B}_r \subset \Omega_r(k)\subset \mathcal{B}_{r_0},\ \forall\ k\geq 0$$ {\color{black}where we used the fact that $y'_0=y_0-y^*(0)$.} 
The following basic bounds follow immediately from the Lipschitz and vanishing properties of the various functions when the state is restricted to belong to the compact set $(x,y')\in \mathcal{B}_{r_0}$.
\begin{eqnarray}
\|\phi \left(k, x, y^*(x)\right)\| &=&\|\phi_1(k,x)\| \leq \ell_1 \|x\| \label{eq:bound1a}\\
\|\phi \left(k, x,  y'+y^*(x)\right)- \phi \left(k, x, y^*(x)\right) \| &\leq& \ell_2 \|y'\| \label{eq:bound2a}\\  
\| \varphi(k, y'+y^*(x),x)- y^*(x)\|&=&\| \varphi(k, y'+y^*(x),x)- \varphi(k, y^*(x),x)\|   \nonumber \\
&\leq& \ell_3 \|y' \|  \label{eq:bound4a}\\
\| y^*(x+\epsilon \phi (k, x, y'+y^*(x)))-y^*(x)\| &\leq&  \epsilon\ell_4 \|\phi (k, x, y'+y^*(x))\| \nonumber \\
&\leq&  \epsilon\ell_4 \left(\ell_1\|x\| +  \ell_2\|y'\| \right) \label{eq:bound5a}
\end{eqnarray}
which also imply 
\begin{eqnarray}
\|  \phi \left(k,x, y'+y^*(x)\right)\| &\leq& \|  \phi \left(k,x, y'+y^*(x)\right)-\phi_1(k,x)\|+ \|\phi_1(k,x)\| \nonumber \\
&\leq&  \ell_2 \|y'\| + \ell_1 \|x\| \label{eq:bound3a}   \\
\|x+\epsilon \phi (k, x, y'+y^*(x))\|  &\leq& \|x\|+\epsilon \left(\ell_1\|x\| +  \ell_2\|y'\| \right) \label{eq:bound6a}
\end{eqnarray}

To simplify the notation let us indicate $x=x(k)$, $x^+=x(k+1)$ and
$$\chi=x+\epsilon  \phi \left(k, x,y^*(x) \right)=x+\epsilon\phi_1(k,x) $$

We first want to find an upper bound for the Lyapunov function relative to the slow dynamics:  
\begin{align*}
\Delta V(k,y',x)&:= V'(k+1,x(k+1))-V'(k,x(k))\\
&=V'(k+1,x^+)-V'(k+1,\chi)+V'(k+1,\chi)-V'(k,x)\\
&\leq a_4 \|x^+-\chi\| \left(\|x^+\|+ \|\chi\| \right)-\epsilon a_3 \|x\|^2\\
&\leq  \epsilon a_4\ell_2 \|y'\| (2(1+\epsilon\ell_1)\|x\|+\epsilon\ell_2\|y'\|)-\epsilon a_3 \|x\|^2\\
&\leq \epsilon^2 a_4 \ell_2^2\|y'\|^2+\epsilon 2 a_4\ell_2(1+\epsilon\ell_1)\|x\|\|y'\|-\epsilon a_3 \|x\|^2
\\
&  \leq A_V\|x\|^2+B_V\|x\|\|y\|+C_V\|y'\|^2,
\end{align*}
with $A_V = -\epsilon a_3,\ B_V = \epsilon 2 a_4\ell_2(1+\epsilon\ell_1)$ and $C_V = \epsilon^2 a_4 \ell_2^2$, which, for suitable positive constants $v_{A_1},v_{B_1},v_{B_2},v_{C_1}$, can be rewritten as $A_V = -\epsilon v_{A_1},\ B_V = \epsilon v_{B_1}+\epsilon^2 v_{B_2}$ and $C_V = \epsilon^2 v_{C_1}$.

To simplify the notation let us indicate $y'=y'(k)$, ${y'}^+=y(k+1)$ and
\begin{align*}
\Delta W(k,y',x)& := W(k+1,y'(k+1),x(k+1))-W(k,y'(k),x(k))\\
&= \underbrace{W(k+1,\varphi(k,y'+y^*(x),x)-y^*(x^+),x^+) -W(k+1,\varphi(k,y'+y^*(x),x)-y^*(x),x^+)}_{\Delta W_1}+\\
&+ \underbrace{W(k+1,\varphi(k,y'+y^*(x),x)-y^*(x),x^+) - W(k+1,\varphi(k,y'+y^*(x),x)-y^*(x),x)}_{\Delta W_2}+\\
&+\underbrace{W(k+1,\varphi(k,y'+y^*(x),x)-y^*(x),x)-W(k,y',x)}_{\Delta W_3}
\end{align*}
Using the properties of the Lyapunov function $W$ and Equations \eqref{eq:bound4a} and \eqref{eq:bound5a} 
\begin{align*}
\Delta W_1& \leq b_4 \| y^*(x^+)-y^*(x) \| ( \| \varphi(k,y'+y^*(x),x)-y^*(x^+)\| + \| \varphi(k,y'+y^*(x),x)-y^*(x)\|)\\
& \leq b_4\epsilon\ell_4(\ell_1\|x\|+\ell_2\|y'\|)( 2 \| \varphi(k,y'+y^*(x),x)-y^*(x)\| +  \| y^*(x^+)-y^*(x) \|)\\
&\leq b_4\epsilon\ell_2(\ell_1\|x\|+\ell_2\|y'\|)(2\ell_3\|y'\|+\epsilon\ell_4(\ell_1\|x\|+\ell_2\|y'\|))
\\
&\leq b_4\epsilon\ell_2(\ell_1\|x\|+\ell_2\|y'\|)(\epsilon\ell_1\ell_4\|x\|+(2\ell_3+\epsilon\ell_2\ell_4)\|y'\|)
\\
&\leq\epsilon b_4\ell_2\left[\epsilon\ell_1^2\ell_4\|x\|^2+(2\ell_1\ell_3+2\epsilon\ell_1\ell_2\ell_4)\|y'\|\|x\|+\ell_2(2\ell_3+\epsilon\ell_2\ell_4)\|y'\|^2\right]
\\
&\leq A_{W_1}\|x\|^2+B_{W_1}\|x\|\|y'\|+C_{W_1}\|y'\|^2,
\end{align*}
with $A_{W_1}:= \epsilon^2 b_4\ell_1^2\ell_2\ell_4,\ B_{W_1} := \epsilon b_4\ell_2(2\ell_1\ell_3+2\epsilon\ell_1\ell_2\ell_4)$ and $C_{W_1} := \epsilon b_4\ell_2^2(2\ell_3+\epsilon\ell_2\ell_4)$.

Concerning $\Delta W_2$, using again the properties of $W$ and Equations \eqref{eq:bound4a} and \eqref{eq:bound6a}
\begin{align*}
\Delta W_2& \leq b_5 \| \varphi(k,y'+y^*(x),x)-y^*(x)\|^2 \|x^+-x\|\\
&\leq b_5\ell_3^2\|y'\|^2\epsilon(\ell_1\|x\|+\ell_2\|y'\|)
\end{align*}
Now, considering that $\|x\|$ and $\|y\|$ are smaller than a $\bar{r}$ and that $y^*$ is a twice differentiable function of $x$, we have that $\|y^*\|$ is bounded itself by a $\tilde{r}$, and so
\begin{align*}
\Delta W_2
&\leq b_5\ell_3^2\|y'\|\|y-y^*(x)\|\|\epsilon(\ell_1\|x\|+\ell_2\|y'\|)
\\
&\leq b_5\ell_3^2\|y'\|(\bar{r}+\tilde{r})\epsilon(\ell_1\|x\|+\ell_2\|y'\|)
\\
&\leq b_5\ell_3^2(\bar{r}+\tilde{r})\epsilon\|y'\|(\ell_1\|x\|+\ell_2\|y'\|)
\\
&\leq \epsilon b_5\ell_3^2(\bar{r}+\tilde{r})(\ell_1\|x\|\|y'\|+\ell_2\|y'\|^2)
\\
&\leq B_{W_2}\|x\|\|y'\|+C_{W_2}\|y'\|^2,
\end{align*}
with $B_{W_2}:=\epsilon b_5\ell_1\ell_3^2(\bar{r}+\tilde{r})$ and $C_{W_2}:=\epsilon b_5\ell_2\ell_3^2(\bar{r}+\tilde{r})$.

Finally, using the properties of $W$, we have
$$
\Delta W_3\leq-b_3\|y'\|\leq -C_{W_3}\|y'\|^2,
$$
with $C_{W_3}:=b_3$.
Summing up all the previous inequalities, we have
\[
\Delta W(k,y',x)\leq A_W\|x\|^2+B_W\|x\|\|y'\|+C_W\|y'\|^2,
\]
with
\begin{align*}
A_W &= A_{W_1} =  \epsilon^2 b_4\ell_1^2\ell_2\ell_4 = \epsilon^2 w_{A_1}
\\
B_W &= B_{W_1}+B_{W_2} = \epsilon b_4\ell_2(2\ell_1\ell_3+2\epsilon\ell_1\ell_2\ell_4)+\epsilon b_5\ell_1\ell_3^2(\bar{r}+\tilde{r}) = \epsilon w_{B_1}+\epsilon^2 w_{B_2}
\\
C_W &= C_{W_1}+C_{W_2}-C_{W_3} =\epsilon b_4\ell_2^2(2\ell_3+\epsilon\ell_2\ell_4)+\epsilon b_5\ell_2\ell_3^2(\bar{r}+\tilde{r})-b_3 = -w_{C_1}+\epsilon w_{C_2}+\epsilon^2 w_{C_3},
\end{align*}
for suitable positive constants $w_{A_1},w_{B_1},w_{B_2},w_{C_1},w_{C_2},w_{C_3}$.

Now it is possible to evaluate $\Delta U(k,x,y')= U(k+1,x^+,y'^+)-U(k,x,y')$. It holds
\[
\Delta U(k,x,y')= \Delta W(k,y',x) +\Delta V(k,y',x),
\]
and the following bound easily follows
\begin{align*}
\Delta U(k,x,y')&\leq  A_V\|x\|^2+B_V\|x\|\|y\|+C_V\|y'\|^2+ A_W\|x\|^2+B_W\|x\|\|y'\|+C_W\|y'\|^2
\\
&\leq (A_V+A_W)\|x\|^2+(B_V+B_W)\|x\|\|y\|+(C_V+C_W)\|y\|^2.
\end{align*}
This quadratic form can be rewritten as
\[
\Delta U(k,x,y')\leq \begin{bmatrix}
\|x\| & \|y'\| 
\end{bmatrix} \underbrace{\begin{bmatrix}
A_V+A_W & \frac{1}{2}(B_V+B_W) \\
\frac{1}{2}(B_V+B_W) & C_V+C_W
\end{bmatrix}}_{Q_U}\begin{bmatrix}
\|x\| \\ \|y'\| 
\end{bmatrix}.
\]
Now the aim is to show that $\Delta U(k,x,y')$ is always a negative quantity for a small enough choice of the parameter $\epsilon$. It is therefore necessary to study whether matrix $Q_U$ is negative definite:
\begin{align*}
Q_U &= \begin{bmatrix}
-\epsilon v_{A_1}+\epsilon^2 w_{A_1} & \frac{1}{2}(\epsilon v_{B_1}+\epsilon^2 v_{B_2}+\epsilon w_{B_1}+\epsilon^2 w_{B_2}) \\
\frac{1}{2}(\epsilon v_{B_1}+\epsilon^2 v_{B_2}+\epsilon w_{B_1}+\epsilon^2 w_{B_2}) & \epsilon^2 v_{C_1}-w_{C_1}+\epsilon w_{C_2}+\epsilon^2 w_{C_3}
\end{bmatrix}
\end{align*}
In order to verify whether $Q_U$ is negative definite, it is enough to verify whether the first principal minor is negative and the second one is positive for some choices of $\epsilon$. These quantities, in Lindau notations, are respectively
\[
	-\epsilon v_{A_1}+o(\epsilon),\quad \epsilon v_{A_1}w_{C_1}+o(\epsilon)
\]
and so, there exists $\epsilon'_r$ such that for $\epsilon<\epsilon'_r$ matrix $Q_U$ is negative definite. As a consequence, there exists a positive constant $\ell_U$ such that
\[
	Q_U\leq -\epsilon\ell_U I,
\]
and so 
\begin{align*}
\Delta U(k,x,y')&\leq  -\epsilon\ell_U (\|x\|^2+\|y'\|^2)
\\
&\leq -\epsilon\ell_U \left(\frac{1}{c_1} V(x)+\frac{1}{b_1}W(k,y',x)\right)
\\
&\leq -\epsilon \gamma_r U(k,x,y'),
\end{align*}
with $\gamma_r:=\ell_U \max\{\frac{1}{c_1},\frac{1}{b_1}\}$.

From the latter inequality it follows, for some $\ell>0$, 
\[
\|x(k)\|^2 + \|y'(k)\|^2
 \leq \ell (1-\epsilon\gamma_r)^{k}
\left(\|x(0)\|^2 + \|y'(0)\|^2\right),
\]
and since $\|x(0)\|^2+\|y'(0)\|\leq r$, there exists a $C_r>0$ such that
\[
\|x(k)\|^2 \leq C_r (1-\epsilon\gamma_r)^k, \ \ \ \ \epsilon\in(0,\epsilon_r)
\]
where $\epsilon_r=\min\{\epsilon_c,\epsilon_r'\}$.
\end{proof}

\begin{proposition}[Global exponential stability]
Consider the system defined in the previous theorem and assume that all assumptions hold. If in addition the following inequalities are globally and uniformly satisfied:
\begin{eqnarray*}
\|\phi \left(k, x, y^*(x)\right)\| &=&\ell_1 \|x\| \\
\|\phi \left(k, x,  y'+y^*(x)\right)- \phi \left(k, x, y^*(x)\right) \| &\leq& \ell_2 \|y'\| \\  
\| \varphi(k, y'+y^*(x),x)- y^*(x)\|&=&  \ell_3 \|y' \| \\
\| y^*(x+\epsilon \phi (k, x, y'+y^*(x)))-y^*(x)\| &\leq&  \epsilon\ell_4  \|\phi (k, x, y'+y^*(x))\|
\end{eqnarray*}
Then there exist $\epsilon_c$ such that for $\epsilon\in(0,\epsilon_c)$ the system is globally exponentially stable. 
\end{proposition}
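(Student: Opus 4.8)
The plan is to run the argument of the preceding (semi-global) proposition essentially verbatim, the only structural change being that the localization to the forward-invariant compact set $\mathcal{B}_{r_0}$ is discarded. In that proof the estimates \eqref{eq:bound1a}--\eqref{eq:bound6a} were available only because the state was confined to $\mathcal{B}_{r_0}$; here the four displayed inequalities are standing hypotheses, valid for all $x,y'\in\mathbb{R}^n$ and all $k$, so \eqref{eq:bound1a}--\eqref{eq:bound6a} hold globally with the same constants $\ell_1,\dots,\ell_4$. Consequently the sets $\mathcal{B}_r,\Omega_r(k),\mathcal{B}_{r_0}$, the invariance discussion, and the auxiliary bounds $\bar r,\tilde r$ all drop out, and whatever negative-definite quadratic estimate one obtains for $\Delta U$ now holds on all of $\mathbb{R}^n\times\mathbb{R}^n$.

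Concretely I would: (i) set $y'(k):=y(k)-y^*(x(k))$ and rewrite the dynamics in $(x,y')$ coordinates exactly as before; (ii) invoke Proposition~\ref{thm:inv_layp_exp} (legitimate by hypothesis~(i)) to obtain $W(k,y',x)$ with $b_1\|y'\|^2\le W\le b_2\|y'\|^2$, the one-step decrease $\le -b_3\|y'\|^2$, and the two Lipschitz-type bounds with constants $b_4,b_5$; (iii) invoke Proposition~\ref{prop:Lyap} (legitimate by hypotheses~(iii)--(iv)) to obtain $V'(k,x)$ and $\epsilon_c\in(0,1]$ with the three displayed properties, $\phi_1(k,x)=\phi(k,x,y^*(x))$ being the averaged slow field; (iv) form $U(k,x,y')=V'(k,x)+W(k,y',x)$, so that $\alpha(\|x\|^2+\|y'\|^2)\le U\le\beta(\|x\|^2+\|y'\|^2)$ with $\alpha=\min\{a_1,b_1\}$, $\beta=\max\{a_2,b_2\}$; (v) bound $\Delta V(k,y',x)$ and the three pieces $\Delta W_1,\Delta W_2,\Delta W_3$ of $\Delta W(k,y',x)$ using \eqref{eq:bound1a}--\eqref{eq:bound6a} and the Lipschitz/contraction properties of $\varphi,\phi,V',W$, collecting the result as $\Delta U\le[\,\|x\|\ \|y'\|\,]\,Q_U\,[\,\|x\|\ \|y'\|\,]^{\top}$ for a symmetric $2\times2$ matrix $Q_U$ whose entries are polynomials in $\epsilon$ of the form $-\epsilon v_{A_1}+O(\epsilon^2)$, $\tfrac12(\epsilon w_{B_1}+O(\epsilon^2))$, $-w_{C_1}+O(\epsilon)$; (vi) note that the two leading principal minors of $Q_U$ expand as $-\epsilon v_{A_1}+o(\epsilon)$ and $\epsilon v_{A_1}w_{C_1}+o(\epsilon)$, hence there is $\epsilon_c'>0$ and $\ell_U>0$ with $Q_U\preceq-\epsilon\ell_U I$ for all $\epsilon\in(0,\epsilon_c')$; (vii) conclude $\Delta U\le-\epsilon\gamma\,U$ with $\gamma=\ell_U\max\{1/c_1,1/b_1\}$, hence $U(k)\le(1-\epsilon\gamma)^k U(0)$ and therefore $\|x(k)\|^2+\|y'(k)\|^2\le\ell(1-\epsilon\gamma)^k(\|x_0\|^2+\|y_0-y^*(0)\|^2)$ for every initial condition, which is global exponential stability with $\epsilon_c:=\min\{\epsilon_c,\epsilon_c'\}$.

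I expect the main obstacle to be step~(v), and inside it the term $\Delta W_2=W(k+1,\tilde y,x^+)-W(k+1,\tilde y,x)$ with $\tilde y=\varphi(k,y'+y^*(x),x)-y^*(x)$. In the semi-global proof this was controlled by feeding the bound $|W(k,\eta,x_1)-W(k,\eta,x_2)|\le b_5\|\eta\|^2\|x_1-x_2\|$ the estimate $\|\tilde y\|\le\ell_3\|y'\|$ together with $\|x^+-x\|\le\epsilon(\ell_1\|x\|+\ell_2\|y'\|)$ and then absorbing one surplus factor of $\|y'\|$ into the constant $\bar r+\tilde r$; globally that absorption is no longer available, so one must argue that the contribution of $\Delta W_2$ (and of the symmetric cross term produced by $\Delta V$ and $\Delta W_1$) can be made to fit, uniformly in the state, into the negative-definite quadratic form $Q_U$ using only the global growth bounds of the hypotheses. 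Dispatching this cleanly -- i.e.\ showing that the strengthened, globally valid versions of \eqref{eq:bound1a}--\eqref{eq:bound6a} are exactly what is needed for $Q_U$ to stay negative definite over the whole space -- is the technical heart of upgrading the semi-global conclusion to the global one; everything else is the same bookkeeping as in the previous proof, now carried out without the compact-set restriction.
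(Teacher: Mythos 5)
You have correctly identified where the difficulty lies, but you have not resolved it, and as a result the proposal is not yet a proof. (Note that the paper itself states this proposition without proof, so there is no argument of record to fall back on.) The problem is the term $\Delta W_2$, together with the analogous degree-three contributions from $\Delta W_1$. The only control you have on the $x$-dependence of $W$ is $|W(k,\eta,x_1)-W(k,\eta,x_2)|\le b_5\|\eta\|^2\|x_1-x_2\|$, and feeding it $\|\eta\|\le\ell_3\|y'\|$ and $\|x^+-x\|\le\epsilon(\ell_1\|x\|+\ell_2\|y'\|)$ yields
\[
\Delta W_2\;\le\;\epsilon\, b_5\ell_3^2\bigl(\ell_1\|x\|\,\|y'\|^2+\ell_2\|y'\|^3\bigr),
\]
which is homogeneous of degree three in the state. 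In the semi-global proof this was demoted to degree two by trading one factor of $\|y'\|$ for the constant $\bar r+\tilde r$, which exists only because the trajectory is confined to $\mathcal{B}_{r_0}$. Globally no such trade is available, and the term cannot be absorbed into a quadratic form $[\|x\|\ \|y'\|]\,Q_U\,[\|x\|\ \|y'\|]^{\top}$ with state-independent entries: the only negative term of order one in the $\|y'\|^2$ slot is $-b_3\|y'\|^2$, and for any fixed $\epsilon>0$ the bound $\epsilon\,b_5\ell_3^2\ell_2\|y'\|^3$ exceeds it as soon as $\|y'\|>b_3/(\epsilon\, b_5\ell_3^2\ell_2)$. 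So the region where $Q_U$-type negativity can be certified grows only like $1/\epsilon$, which is precisely the semi-global statement in a different parametrization, not the global one. Saying that ``one must argue that the contribution of $\Delta W_2$ can be made to fit'' names the obstacle; it does not dispatch it.

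To close the gap you need an additional idea, not just the removal of the compact set. Possible routes: (a) show that under the added global hypotheses the $x$-Lipschitz bound on $W$ can be taken in a form whose product with $\|x^+-x\|$ is genuinely quadratic (e.g.\ by re-deriving the constant $a_5$ in Proposition~\ref{thm:inv_layp_exp} so that the relevant estimate reads $b_5\|y'\|^2\min\{1,\|x_1-x_2\|\}$ or $b_5\|y'\|\,\|x_1-x_2\|\cdot\|y'\|_{\mathrm{sat}}$ for a saturated factor); (b) first prove, with an $\epsilon$ threshold independent of the initial condition, that every trajectory enters a fixed compact set in finite time (a global ultimate-boundedness step), and only then apply the semi-global estimate inside that set; or (c) weight the composite function as $U=dV'+W$ and exploit the extra degree of freedom $d$ together with the global linear-growth hypotheses to cancel the cubic cross terms. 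Until one of these (or something equivalent) is carried out, step (v) of your outline fails for large $\|y'\|$ and the conclusion does not follow.
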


\end{document}